\newtheorem{thm}{Theorem}[section]
\newtheorem{lem}[thm]{Lemma}
\newtheorem{prop}[thm]{Proposition}
\newtheorem{rem}[thm]{Remark}
\theoremstyle{definition}
\newcommand{\Aset}{\mathbb{A}}
\newcommand{\Zset}{\mathbb{Z}}
\newcommand{\Rset}{\mathbb{R}}
\newcommand{\Bset}{\mathbb{B}}
\newcommand{\Cset}{\mathbb{C}}
\newcommand{\Iset}{\mathbb{I}}
\newcommand{\Lset}{\mathbb{L}}
\newcommand{\Mset}{\mathbb{M}}
\def\red{{\rm red}}
\newcommand{\Qset}{\mathbb{Q}}
\newcommand{\FF}{\mathbb{F}}
\newcommand{\uJ}{\underline{J}}
\newcommand{\underW}{\underline{W}}
\newcommand{\uP}{\bar{P}}
\newcommand{\uQ}{\bar{Q}}
\newcommand{\btau}{\boldsymbol{\tau}}
\newcommand{\bbs}{\mathbf{s}}
\newcommand{\bG}{\mathbf{G}}
\newcommand{\bT}{\mathbf{T}}
\newcommand{\bc}{\mathbf{c}}
\newcommand{\bd}{\mathbf{d}}
\newcommand{\be}{\mathbf{e}}
\newcommand{\bbb}{\mathbf{b}}
\newcommand{\bbi}{\mathbf{i}}
\def\iso{{\overset\sim\to\,}}
\def\PPhi{{\rm Frob}}
\def\rG{{{\rm G}}}
\def\rA{{{\rm A}}}
\def\Ima{{\rm Im}}
\def\Reel{{\rm Re}}
\def\cB{{\mathcal B}}
\def\cP{{\mathcal P}}
\def\Cent{{\rm Z}}
\def\val{{\rm val}}
\def\ie{{\it i.e.,\,}}
\def\cI{{\mathcal I}}
\def\cJ{{\mathcal J}}
\def\integers{{\mathfrak o}}
\def\ev{{\rm ev}}
\def\SL{{\rm SL}}
\def\SO{{\rm SO}}
\def\GL{{\rm GL}}
\def\PGL{{\rm PGL}}
\def\Cell{{\rm Cell}}
\def\Id{{\rm I}}
\def\II{{\rm I}}
\def\Ind{{\rm Ind}}
\def\id{{\rm e}}
\def\temp{{\rm t}}
\def\Mat{{\rm M}}
\def\Nor{{\rm N}}
\def\orb{{\mathcal C}}
\def\Irr{{\rm Irr}}
\def\aff{{\rm a}}
\def\cA{{\mathcal A}}
\def\cH{{\mathcal H}}
\def\cO{{\mathcal O}}
\def\cT{{\mathcal T}}
\def\cU{{\mathcal U}}
\def\cV{{\mathcal V}}
\def\cW{{\mathcal W}}
\def\aW{{W_{{\aff}}}}
\def\fs{{\mathfrak s}}
\def\fC{{\mathfrak C}}
\def\fii{{\mathfrak i}}
\def\integers{{\mathfrak o}}
\def\fR{{\mathfrak R}}
\def\fZ{{\mathfrak Z}}
\def\fX{{\mathfrak X}}
\def\CC{{\mathbb C}}
\def\bC{{\mathbb C}}
\def\bR{{\mathbb R}}
\def\bZ{{\mathbb Z}}
\def\fA{{\mathfrak A}}
\def\fR{{\mathfrak R}}
\def\fT{{\mathfrak T}}
\def\tW{{\widetilde W}}
\def\tmu{{\tilde \mu}}
\def\uT{{\bar T}}
\def\uX{{\bar X}}
\def\uW{{\bar W}}
\def\q{{/\!/}}
\def\nalpha{{\tau}}
\def\nbeta{{\tau'}}
\def\sgn{{\rm sgn}}
\begin{document}
\title[Geometric structure in the principal series]{Geometric structure in the principal series of the $p$-adic group $\rG_2$}
\author{Anne-Marie Aubert}
\address{Institut de Math\'ematiques de Jussieu, U.M.R. 7586 du C.N.R.S., Paris, France}
\email{aubert@math.jussieu.fr}
\author{Paul Baum}
\address{Pennsylvania State University, Mathematics Department, University Park, PA 16802, USA}
\email{baum@math.psu.edu}
\thanks{The second author was partially supported by NSF grant DMS-0701184}
\author{Roger Plymen}
\address{School of Mathematics, Alan Turing building, Manchester University, Manchester M13 9PL, England}
\email{plymen@manchester.ac.uk}

\subjclass[2010]{20G05, 22E50}

\date{}

\begin{abstract}In the representation theory of reductive $p$-adic groups
$G$, the issue of reducibility of induced representations is an
issue of great intricacy. It is our contention, expressed as a
conjecture in [3], that there exists a simple geometric structure
underlying this intricate theory.

We will illustrate here the conjecture with some detailed
computations in the principal series of $\rG_2$.

A feature of this article is the role played by cocharacters
$h_{\bc}$ attached to two-sided cells $\bc$ in certain extended
affine Weyl groups.

The quotient varieties which occur in the Bernstein programme are
replaced by extended quotients. We form the disjoint union
$\fA(G)$ of all these extended quotient varieties. We conjecture
that, after a simple algebraic deformation, the space $\fA(G)$ is
a model of the smooth dual $\Irr(G)$. In this respect, our
programme is a conjectural refinement of the Bernstein programme.

The algebraic deformation is controlled by the cocharacters
$h_{\bc}$. The cocharacters themselves appear to be closely
related to Langlands parameters.\end{abstract}

\maketitle

\bigskip

{\small
\textsc{Table of contents}
\begin{enumerate}
\item Introduction   \hfill 1 \item The strategy of the proof
\hfill 7 \item Background on the group $\rG_2$ \hfill 11 \item The
Iwahori point in $\fT(\rG_2)$ \hfill 13 \item Some preparatory
results \hfill 20 \item The two cases for which
$H^\fs=\GL(2,\Cset)$ \hfill 24 \item The case $H^\fs=\SL(3,\Cset)$
\hfill 27 \item The case $H^\fs=\SO(4,\Cset)$ \hfill 31 \item[]
References \hfill 43
\end{enumerate}
}

\section{Introduction}
In the representation theory of reductive $p$-adic
groups, the issue of reducibility of induced representations is an
issue of great intricacy: see, for example, the classic article by
Bernstein-Zelevinsky \cite{BZI} on $\GL(n)$ and the more recent
article by Mui\'c \cite{M} on $\rG_2$.
It is our contention, expressed as a conjecture in \cite{ABPCR}, that there
exists a simple geometric structure underlying this intricate theory.
We will illustrate here the conjecture with some
detailed computations in the principal series of $\rG_2$.

Let $F$ be a local nonarchimedean field, let $G$ be the group of
$F$-rational points in a connected reductive algebraic group
defined over $F$, and let $\Irr(G)$ be the set of equivalence
classes of irreducible smooth representations of $G$.

Our programme is a conjectural refinement of the Bernstein
programme, as we now explain.  Denote by $\fZ(G)$ the centre of
the category of smooth $G$-modules.  According to
Bernstein\cite{B}, the centre $\fZ(G)$ is isomorphic to the
product of finitely generated subalgebras, each of which is the
coordinate algebra of a certain irreducible algebraic variety, the
quotient $D/\Gamma$ of an algebraic variety $D$ by a finite group
$\Gamma$. Let $\Omega(G)$ denote the disjoint union of all these
quotient varieties.   The \emph{infinitesimal character} $inf.ch.$
is a finite-to-one map
\[
inf.ch.: \Irr(G) \to \Omega(G).\] Our basic idea is simple: we
replace each quotient variety $D/\Gamma$ by the extended quotient
$D\q \Gamma$, and form the disjoint union $\mathfrak{A}(G)$ of all
these extended quotient varieties.  We conjecture that, after a
simple algebraic deformation, the space $\mathfrak{A}(G)$ is a
model of the smooth dual $\Irr(G)$.

The algebraic deformation is controlled by finitely many
cocharacters $h_{\bc}$, one for each two-sided cell $\bc$ in the
extended affine Weyl group corresponding to $(D,\Gamma)$.
In fact, the cells
$\bc$ determine a decomposition of each extended quotient
$D\q\Gamma$.  The cocharacters themselves appear to be closely
related to Langlands parameters.

In this article, we verify the conjecture in \cite{ABPCR} for the
principal series of the  $p$-adic group $\rG_2$.  We have chosen
this example as a challenging test case.

\smallskip

Hence let $G=\rG_2(F)$ be the group of
$F$-points of a reductive algebraic group of type $\rG_2$. 
Let $T$ denote a maximal torus in $\rG_2$, and let $T^{\vee}$ denote
the dual torus in the Langlands dual $\rG_2^{\vee}$:\[ T^{\vee}
\subset \rG_2^{\vee} = \rG_2(\Cset).\] Since we are working with the
principal series of $\rG_2$, the algebraic variety $D$ has the
structure of the complex torus $T^{\vee}$.

Let $X(T^{\vee})$ denote the group of characters of $T^\vee$ and let
$X_*(T)$ denote the group of cocharacters of $T$. By duality,
these two groups are identified: $X_*(T) = X(T^{\vee})$. Let
$\Psi(T)$ denote the group of unramified characters of $T$. We
have an isomorphism
\[
T^{\vee} \cong \Psi(T), \quad \quad t \mapsto \chi_t\]where
\[
\chi_t(\phi(\varpi_F)) = \phi(t)\] for all $t \in T^{\vee}, \,\phi
\in X_*(T) = X(T^{\vee})$, and $\varpi_F$ is a uniformizer in $F$.

We consider pairs $(T,\lambda)$ consisting of a maximal torus $T$
of $G$ and a smooth quasicharacter $\lambda$ of $T$. Two such
pairs $(T_i,\lambda_i)$ are \emph{inertially equivalent} if there
exists $g \in G$ and $\psi \in \Psi(T_2)$ such that
\[
T_2 = T_1^g \quad \quad \text{and}\quad \quad \lambda_1^g =
\lambda_2 \otimes \psi.\] Here, $T_1^g = g^{-1}T_1g$ and
$\lambda_1^g: x \mapsto \lambda_1(gxg^{-1})$ for $x \in T_1^g$. We
write $[T,\lambda]_G$ for the inertial equivalence class of the
pair $(T,\lambda)$ and $\mathfrak{T}(G)$ for the set of all
inertial equivalence classes of the form $[T,\lambda]_G$.

We will choose a point $\fs \in \fT(G)$. Let $(T,\lambda) \in
\fs$. We will write
\[
D^{\fs}: = \{\lambda \otimes \psi: \psi \in \Psi(T)\}\]for the
$\Psi(T)$-orbit of $\lambda$ in $\Irr(T)$. Let $W(T)$ be the Weyl
group $\Nor_G(T)/T$. We set
\begin{equation} \label{Ws}
W^{\fs}:= \{w \in W(T)\,:\, w \cdot \lambda \in D^{\fs}\}.
\end{equation}

We have the standard projection
\[ \pi^\fs\colon D^{\fs}\q W^{\fs} \to D^{\fs}/W^{\fs}.\]

Section 1: this leads up to our main result, see Theorem
1.4.\smallskip

Section 2: we explain the strategy of our proof. 
\smallskip

Section 3: this  contains background material on $\rG_2$.\smallskip

Sections 4---8: these sections are devoted to our proof, which
requires $20$ Lemmas. The Lemmas are arranged in a logical
fashion: Lemma x.y is a proof of part $y$ of the conjecture for
the character $\lambda$ of $T$ which appears in section $x$. These
Lemmas involve some detailed representation theory, and some
calculations of the ideals $J_{\bc}$ in the asymptotic algebra $J$
of Lusztig corresponding to the complex Lie group $\SO(4,\Cset)$. 
The computation of the ideal $J_{\be_0}$ in section 8
is intricate. Our result here is especially interesting. We
establish a geometric equivalence (see definition below)
\begin{equation} \label{eqn:first}
J_{\be_o} \,\asymp \, \mathcal{O}(T^{\vee}/W^\fs) \oplus
\Cset\end{equation}
where $\be_0$ is the lowest two-sided cell and $\lambda=\chi\otimes\chi$
with $\chi$ a ramified quadratic character of $F^\times$. This
geometric equivalence has the effect of \emph{separating} the two
constituents of an $L$-packet in the principal series of $\rG_2$.

\smallskip

In \cite[\S 4]{ABP} we introduced a \emph{geometrical equivalence}
$\asymp$ between finite type $k$-algebras, which is generated by
elementary steps of the following three types: Morita equivalences, 
morphisms which are spectrum-preserving with respect to filtrations,
and deformations of central characters.
\emph{The assertion that $A \asymp B$ will mean that a definite geometrical 
equivalence has been constructed between $A$ and $B$.}
We would like to emphasize the fact that in~(\ref{eqn:first}) no
deformation of central character is used.

\smallskip

Let $\Irr(G)^{\fs}$ denote the $\fs$-component of $\Irr(G)$ in the
Bernstein decomposition of $\Irr(G)$. We will give the quotient
variety $T^{\vee}/W^\fs$ the Zariski topology, and $\Irr(G)^{\fs}$
the Jacobson topology. We note that irreducibility is an
\emph{open} condition, and so the set $\fR^\fs$ of reducible
points in $T^{\vee}/W^\fs$, i.e. those $(M,\psi\otimes\lambda)$
such that when parabolically induced to $G$, $\psi\otimes\lambda$
becomes reducible, is a sub-variety of $T^{\vee}/W^\fs$. The
reduced scheme associated to a scheme $\fX^\fs$ will be denoted
$\fX_{\red}^\fs$ as in \cite[p.25]{EH}. In the present context, a
\emph{cocharacter} will mean a homomorphism of algebraic groups
$\mathbb{C}^{\times} \to T^{\vee}$.

Let $\cH^\fs(G)$ be the Bernstein ideal of Hecke algebra of $G$
determined by $\fs \in \fT(G)$.
The point $\fs \in \fT(G)$ and the two-sided ideal $\cH^\fs(G)$
are said to be \emph{toral}.
Then $\fs$ is toral if and only if $D^\fs/W^\fs$ has maximal dimension
(that is, $2$ here) in $\Omega(G)$. 

Let $G^\vee=\rG_2(\Cset)$ be the complex reductive Lie group dual to
$G$, and let $T^\vee \subset \rG_2(\Cset)$. We define
\begin{equation} \label{eqn:Wsa}
\tW_\aff^\fs:=W^\fs\ltimes X(T^\vee).\end{equation}
The group $W^\fs$ is a \emph{finite Weyl group}
and $\tW_\aff^\fs$ is an \emph{extended affine Weyl group} (that
is, the semidirect product of an affine Weyl group by a
finite abelian group), see Section~\ref{sec:strategyproof}.
Let $\Phi^{\fs\vee}$ denote the coroot system of $W^\fs$,
and let $Y(T^\vee)$ denote the group of cocharacters of $T^\vee$.
Then let $H^\fs$ be the complex Lie group with root datum
$(X(T^\vee),\Phi^\fs,Y(T^\vee),\Phi^{\fs\vee})$.
We will see that the possible groups $H^\fs$ are $\rG_2(\Cset)$,
$\GL(2,\Cset)$, $\SL(3,\Cset)$ and $\SO(4,\Cset)$. We will consider these
cases in sections 4, 6, 7 and 8, respectively.

\smallskip
For simplicity, we shall assume in the Introduction that $H^\fs$ has
simply connected derived group.
This will be the case in
sections 4, 6 and 7 (not in section~8). 

As any extended affine Weyl group, the group $\tW_\aff^\fs$
is partitioned into \emph{two-sided cells}. This partition arises
(together with Kazhdan-Lusztig polynomials) from comparison of the
Kazhdan-Lusztig basis for the Iwahori-Hecke algebra of
$\tW_\aff^\fs$ with the standard basis. Let $\Cell(\tW_\aff^\fs)$
be the set of two-sided cells in $\tW_\aff^\fs$. The definition of
cells yields a natural partial ordering on $\Cell(\tW_\aff^\fs)$.
We will denote by $\bc_0$ the \emph{lowest} two-sided cell in $\tW_\aff^\fs$.

Let $J^\fs$ denote the Lusztig asymptotic algebra of the group
$\tW_\aff^\fs$ defined in \cite[\S 1.3]{LCellsIII}: this is a
$\Cset$-algebra whose structure constants are integers and which may be
regarded as an asymptotic version of the Iwahori-Hecke algebras
$\cH(\tW_\aff^\fs,\tau)$ of $\tW_\aff^\fs$, where $\tau\in\Cset^\times$.
Moreover, $J^\fs$ admits a canonical decomposition into finitely many
two-sided ideals
$J^\fs=\bigoplus J^\fs_\bc$, labelled by the the two-sided cells in
$\tW_\aff^\fs$.

\begin{prop} \label{prop:extpartition}
There exists a partition of $T^\vee\q W^\fs$ indexed by the two-sided cells in
$\tW_\aff^\fs$:
\[T^\vee\q W^\fs=\bigcup_{\bc\in\Cell(\tW_\aff^\fs)} (T^\vee\q W^\fs)_\bc.\]
The partition can be chosen so that the following property holds:
\begin{equation} \label{eqn:partitionq}
T^\vee/W^\fs\subset(T^\vee\q W^\fs)_{\bc_0}.
\end{equation}\end{prop}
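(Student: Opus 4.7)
The plan is to build the partition by combining the standard stratification of the extended quotient by conjugacy classes with Lusztig's decomposition of the asymptotic algebra into two-sided ideals indexed by cells.

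First I would write
\[T^\vee \q W^\fs \,=\, \bigsqcup_{[w]} T^{\vee w}/Z_{W^\fs}(w),\]
where the disjoint union runs over $W^\fs$-conjugacy classes $[w]$; the identity class $[1]$ contributes the piece $T^{\vee 1}/Z_{W^\fs}(1)=T^\vee/W^\fs$. Since the proposition only asks for the inclusion $T^\vee/W^\fs\subset(T^\vee\q W^\fs)_{\bc_0}$, only this $[1]$-piece has to be placed with care; the remaining pieces can be assigned to the other cells according to the combinatorics below.

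Next I would invoke the Kazhdan--Lusztig/Deligne--Langlands parametrization of simple $J^\fs$-modules by $H^\fs$-conjugacy classes of triples $(s,u,\rho)$, with $s\in T^\vee$ semisimple, $u\in Z_{H^\fs}(s)$ unipotent, and $\rho$ an irreducible representation of the component group of $Z_{H^\fs}(s,u)$. Under the ideal decomposition $J^\fs=\bigoplus_\bc J^\fs_\bc$, the simple modules of $J^\fs_\bc$ are precisely those triples whose unipotent part $u$ lies in the unipotent class attached to $\bc$. Combining this with the identification of the parameter set of simple $J^\fs$-modules with $T^\vee\q W^\fs$ (the identification underlying the geometric equivalences of the type~(\ref{eqn:first})) produces a partition indexed by $\Cell(\tW_\aff^\fs)$. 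The key observation for the proposition is that the component $T^\vee/W^\fs$ corresponds to triples $(s,1,1)$ with \emph{trivial} unipotent part, and, in the conventions of this paper (as confirmed by the explicit computations leading to~(\ref{eqn:first})), the lowest cell $\bc_0$ is precisely the cell attached to the trivial unipotent class. This gives~(\ref{eqn:partitionq}).

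The main obstacle is to make the cell-to-unipotent-class correspondence concrete enough to see it geometrically on the extended quotient. I would verify this case by case in sections~4, 6, 7 and~8, one for each $H^\fs\in\{\rG_2(\Cset),\GL(2,\Cset),\SL(3,\Cset),\SO(4,\Cset)\}$; the case $H^\fs=\SO(4,\Cset)$ (section~8) is the most delicate, since the derived group is not simply connected, the component groups appearing in the Kazhdan--Lusztig parametrization can be non-trivial, and the simplifying hypothesis stated just before the proposition no longer holds.
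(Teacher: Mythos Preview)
Your outline matches the paper's approach: the partition is constructed case by case (sections~4, 6, 7, 8, one per $H^\fs$), and in each case one checks that the pieces correspond to the ideals $J^\fs_\bc$ via geometric equivalences $J^\fs_\bc\asymp\cO((T^\vee\q W^\fs)_\bc)$. Your argument for~(\ref{eqn:partitionq}), via triples $(s,1,1)$ together with Lusztig's bijection sending the trivial unipotent class to the lowest cell, is the right reasoning.

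One point needs sharpening. Your first paragraph suggests assigning each conjugacy-class stratum $T^{\vee w}/Z_{W^\fs}(w)$ wholesale to a single cell; your second paragraph gives the finer recipe of pulling back the cell decomposition along a bijection $\Irr(J^\fs)\cong T^\vee\q W^\fs$. For $H^\fs\in\{\rG_2(\Cset),\GL(2,\Cset),\SL(3,\Cset)\}$ the two agree, but for $H^\fs=\SO(4,\Cset)$ they do not: the stratum for $w=r^3$ consists of three isolated points, and the paper's partition~(\ref{defn:extSO}) sends two of them to the \emph{highest} cell $\be_e$ and the third, $pt_*$, to the \emph{lowest} cell $\be_0$ (the Note immediately after~(\ref{defn:extSO}) flags exactly this failure of a conjugacy-class rule). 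The split is forced by the module theory of $J^\fs_{\be_0}$: at one semisimple parameter the component group is $\Zset/2\Zset$, producing a second simple module with trivial unipotent part but $\rho=\sgn$; this extra module accounts for $pt_*$ and is why the inclusion~(\ref{eqn:partitionq}) is strict in this case. So of your two framings only the second survives in the non-simply-connected case, and even that one is circular as stated, since the bijection $\Irr(J^\fs)\cong T^\vee\q W^\fs$ is itself only established through the same case-by-case lemmas. The paper accordingly does not attempt a uniform rule but simply writes the partition down by hand in each section and then verifies the equivalences.
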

\begin{rem} \label{rem:partition}
{\rm
The cell
decomposition in Proposition~\ref{prop:extpartition} is inherited from the
canonical decomposition of
the asymptotic algebra $J^\fs$ into two-sided ideals $J^\fs_\bc$:
see~(\ref{defn:extIwahori}), (\ref{defn:extGL}),
(\ref{defn:extSL}), (\ref{defn:extSO}), and Lemma~\ref{lem:extIwahori},
\ref{lem:extGL}, \ref{lem:extSL}, \ref{lem:extSO}.
We will also see there that the inclusion in~(\ref{eqn:partitionq}) can be strict.
}\end{rem}

We choose a partition
\[T^\vee\q W^\fs=\bigcup_{\bc\in\Cell(\tW_\aff^\fs)} (T^\vee\q W^\fs)_\bc,\]
so that~(\ref{eqn:partitionq}) holds.
We will call $(T^\vee\q W^\fs)_\bc$ the \emph{$\bc$-component} of
$T^\vee\q W^\fs$.

\smallskip

We will denote by $k$ the coordinate algebra $\cO(T^\vee/W^\fs)$ of the ordinary
quotient $T^\vee/W^\fs$. Then $k$ is isomorphic to the centre of
$\cH(\tW_\aff^\fs,\tau)$, \cite[\S 8.1]{LLuminy}. Let
\begin{equation} \label{eqn:Lusztigmap}
\phi_\tau\colon \cH(\tW_\aff^\fs,\tau)\to J^\fs
\end{equation}  be the $\Cset$-algebra
homomorphism that Lusztig defined in \cite[\S 1.4]{LCellsIII}. The centre of $J^\fs$
contains $\phi_\tau(k)$, see \cite[Prop.~1.6]{LCellsIII}. This provides
$J^\fs$ (and also each $J^\fs_\bc$) with a structure of $k$-module algebra
(this structure depends on the choice of $\tau$).
Both $\cH(\tW_\aff^\fs,\tau)$ and $J^\fs$ are finite type $k$-algebras.

\smallskip

We will assume that $p\ne 2,3,5$ in order to be able to apply the results of
Roche in \cite{Roc}. By combining \cite[Theorem~6.3]{Roc}
and \cite[Theorem~1]{ABP}, we obtain that the ideal $\cH^\fs(G)$
is a $k$-algebra Morita equivalent the $k$-algebra $\cH(\tW^\fs_\aff,q)$,
where $q$ is the order of the residue field of $F$.
On the other hand, the morphism $\phi_q\colon \cH(\tW_\aff^\fs,q)\to J^\fs$ is
spectrum-preserving with respect to filtrations, see
\cite[Theorem~9]{BN}.

It follows that the Bernstein ideal $\cH^\fs(G)$
is \emph{geometrically equivalent} to $J^\fs$ (which is 
equipped with the structure of $k$-algebra induced by $\phi_q$):
\begin{equation} \label{eqn:geHJ}
\cH^\fs(G)\asymp J^\fs.\end{equation}

\begin{rem} \label{rem:anyps}
{\rm We observe that similar arguments show that the geometrical equivalence
in~(\ref{eqn:geHJ}) is true for
each toral Bernstein ideal $\cH^\fs(G)$ of any $p$-adic group
$G$ (with the same restrictions on $p$ as in \cite[\S 4]{Roc}),
which is the group of $F$-points of an $F$-split connected reductive
algebraic group $\bG$ such the centre of $\bG$ is connected.
}\end{rem}

\begin{rem}\label{rem:sums}
{\rm We note that geometric equivalence respects direct sums. Suppose that we are given geometric equivalences
\[\alpha_1 : A_1 \asymp B_1, \quad \alpha_2 : A_2 \asymp B_2\]
We then have
\[
\alpha_1 \oplus 1: A_1 \oplus A_2 \asymp B_1 \oplus A_2,\quad \quad
1 \oplus \alpha_2: B_1 \oplus A_2 \asymp B_1 \oplus B_2\]
so that we have a definite geometrical equivalence
\[
(1 \oplus \alpha_2) (\alpha_1 \oplus 1) : A_1 \oplus A_2 \asymp B_1 \oplus B_2.\]
}
\end{rem}

By a case-by-case analysis for $G=\rG_2(F)$, we will prove that the
$k$-algebra $J^\fs$ (equipped here with the structure of $k$-algebra
induced by $\phi_1$) is itself geometrically equivalent
to the coordinate algebra $\cO(T^\vee\q W^\fs)$ of the extended quotient
$T^\vee\q W^\fs$:
\begin{equation} \label{eqn:geJextq}
J^\fs\,\asymp\,\cO(T^\vee\q W^\fs).\end{equation}
The geometric equivalence~(\ref{eqn:geJextq}) comes from
geometric equivalences
\begin{equation} \label{eqn:versetac}
J^\fs_\bc\,\asymp\,\cO((T^\vee\q
W^\fs)_\bc),
\quad\text{ for each $\bc\in\Cell(\tW_\aff^\fs)$}\end{equation}
that will be constructed in Lemma \ref{lem:extIwahori}, \ref{lem:extGL},
\ref{lem:extSL}, \ref{lem:extSO}.


\smallskip 

Let $\bc$ be a two-sided cell of $\tW_\aff^\fs$.
Then~(\ref{eqn:versetac}) induces a bijection
\begin{equation} \label{eqn:etac}
\eta_\bc^\fs\colon (T^\vee\q W^\fs)_\bc\to
\Irr(J^\fs_\bc).\end{equation}
We will denote by $\eta^\fs\colon T^\vee\q W^\fs\to \Irr(J^\fs)$ the bijection
defined by
\begin{equation} \label{eqn:eta}
\eta^\fs(t)=\eta^\fs_\bc(t), \quad\text{for $t\in (T^\vee\q W^\fs)_\bc$.}
\end{equation}
On the other hand, let
$\phi_{q,\bc}^\fs\colon \cH(\tW_\aff^\fs,q)\to J_\bc^\fs$ denote the
composition of the map $\phi_q^\fs$ and of the projection of $J$ onto
$J_{\bc}^\fs$. Let $E$ be a simple $J_{\bc}^\fs$-module, through the
homomorphism $\phi_{q,\bc}^\fs$, it is endowed with an
$\cH(\tW_\aff^\fs,q)$-module structure. We denote the
$\cH(\tW_\aff^\fs,q)$-module by $(\phi_{q,\bc}^\fs)^*(E)$. Lusztig
showed in \cite{LCellsIII} (see also \cite[\S 5.13]{Xi}) that the
set
\begin{equation}
\label{eqn:irrH}
\left\{(\phi_{q,\bc}^\fs)^*(E)\,:\,\begin{matrix}
\bc &\text{a two-sided cell of
$\tW_\aff^\fs$}\cr
E&\text{a simple $J_\bc^\fs$-module}
\end{matrix}\;\;\text{ (up to isomorphisms)}\right\}\end{equation}
is a complete set of simple $\cH(\tW_\aff^\fs,q)$-modules.

Hence we obtain a bijection
\begin{equation} \label{eqn:tmu}
\tmu^\fs\colon T^\vee\q
W^\fs\to\Irr(\cH(\tW_\aff^\fs,q))\end{equation} by setting
\begin{equation} \label{eqn:mu_c}
\tmu^\fs(t)=(\phi_{q,\bc}^\fs)^*\left(\eta_\bc^\fs(t)\right)\quad\text{
for $t\in (T^\vee\q W^\fs)_\bc.$}\end{equation} Let \begin{equation}
\label{eqn:mu} \mu^\fs\colon T^\vee\q
W^\fs\to\Irr(G)^\fs\end{equation} denote the composition of
$\tmu^\fs$ with the bijection \[\theta^\fs\colon
\Irr(\cH(\tW_\aff^\fs,q))\to\Irr(G)^\fs\] defined by Roche
\cite{Roc}.

We have
\begin{eqnarray}
 \mu^{\fs}: = \theta^\fs \circ (\phi_q^\fs)^* \circ \eta^\fs. \end{eqnarray}
We should emphasize that the map $\mu^{\fs}$ is not canonical: it
depends on a choice of geometrical equivalence inducing $\eta^\fs$.

\smallskip
\paragraph{Main result.}    Here is our main result, which is a consequence of $20$ Lemmas:
Lemma \ref{lem:extIwahori}, \ref{lem:familyIwahori},
\ref{lem:cocharacterIwahori},
\ref{lem:bijIwahori}, \ref{lem:tempIwahori}, \ref{lem:extGL}, \ref{lem:familyGL},
\ref{lem:cocharacterGL},  $\ldots$, \ref{lem:extSO}, \ref{lem:familySO},
\ref{lem:cocharacterSO}, \ref{lem:bijSO}, \ref{lem:tempSO}.

\begin{thm} \label{thm:main} Let $G=\rG_2(F)$ with $p\ne 2,3,5$.
Let $\fs = [T,\lambda]_G$ with $\lambda$ a smooth character of
\,$T$.  
Then we have
\begin{enumerate}
\item  The Bernstein ideal $\mathcal{H}^{\fs}(G)$ is geometrically equivalent to the coordinate algebra $\mathcal{O}(T^{\vee}  \q W^{\fs})$ of the affine variety
$T^{\vee} \q W^{\fs}$ and this determines a bijection
\[
\mu^{\fs} : T^{\vee} \q W^{\fs} \to \Irr(G)^{\fs}
\]
 \item
There is a flat family $\fX_\tau^\fs$ of subschemes of
$T^\vee/W^\fs$, with $\tau \in \Cset^{\times}$, such that
\[\fX_1^\fs = \pi^\fs(T^\vee\q W^\fs - T^\vee/W^\fs), \quad \quad
\fX^\fs_{\sqrt{q}} = \fR^\fs.\]The schemes $\fX_{1}^\fs$,
$\fX_{\sqrt q}^\fs$ are reduced. 
\item There exists a correcting system of cocharacters for the bijection
\[
\mu^{\fs}: T^{\vee} \q W^{\fs} \to \Irr(G)^{\fs}
\]
\item The geometrical equivalence in  1. can be chosen so that
\[(inf.ch.)\circ \mu^\fs = \pi_{\sqrt q}^\fs\] \item Let $E^{\fs}$
denote the maximal compact subgroup of $T^{\vee}$. Then we have
\[\mu^\fs(E^\fs\q W^\fs) = \Irr^{\temp}(G)^{\fs}.\]
\end{enumerate}
\end{thm}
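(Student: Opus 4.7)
\bigskip

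\noindent\textbf{Proof proposal.} The plan is to reduce the theorem to a finite collection of case-by-case statements about the asymptotic algebra, one for each of the four possible complex dual groups $H^\fs \in\{\rG_2(\Cset),\GL(2,\Cset),\SL(3,\Cset),\SO(4,\Cset)\}$. The skeleton is already supplied by (\ref{eqn:geHJ}): Roche's Morita equivalence \cite[Theorem~6.3]{Roc} (using $p\ne 2,3,5$), combined with \cite[Theorem~1]{ABP}, gives $\cH^\fs(G)\asymp\cH(\tW_\aff^\fs,q)$, and Lusztig's morphism $\phi_q$ is spectrum-preserving with respect to filtrations \cite[Theorem~9]{BN}, yielding $\cH(\tW_\aff^\fs,q)\asymp J^\fs$. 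Hence to establish part 1 it suffices to produce a geometric equivalence $J^\fs\asymp\cO(T^\vee\q W^\fs)$ in each of the four cases.

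The strategy for doing so is to exploit the canonical decomposition $J^\fs=\bigoplus_\bc J^\fs_\bc$ into two-sided ideals labelled by $\bc\in\Cell(\tW_\aff^\fs)$ together with the matched decomposition $T^\vee\q W^\fs=\bigcup_\bc(T^\vee\q W^\fs)_\bc$ of Proposition~\ref{prop:extpartition}. For each cell $\bc$ I would construct a geometric equivalence
\[J^\fs_\bc\asymp\cO\bigl((T^\vee\q W^\fs)_\bc\bigr)\]
by identifying $J^\fs_\bc$ explicitly, using Lusztig's and Xi's descriptions of the asymptotic algebra in terms of equivariant vector bundles over the unipotent variety of $H^\fs$, and comparing with a direct computation of the $\bc$-component of the extended quotient. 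Remark~\ref{rem:sums} then assembles these cell-by-cell equivalences into a global equivalence, giving part 1 with a bijection $\mu^\fs$ defined as in (\ref{eqn:mu_c})--(\ref{eqn:mu}). The four cases occupy sections 4, 6, 7 and 8 respectively, handled by Lemmas \ref{lem:extIwahori}, \ref{lem:extGL}, \ref{lem:extSL}, \ref{lem:extSO}.

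For parts 2--5, each case-by-case analysis also extracts the data needed beyond part 1. To each cell $\bc$ one attaches a cocharacter $h_\bc\colon\Cset^\times\to T^\vee$; these assemble into the correcting system required for part 3 (Lemmas \ref{lem:cocharacterIwahori}, \ref{lem:cocharacterGL}, $\ldots$, \ref{lem:cocharacterSO}). The deformation defined by evaluating $h_\bc$ at $\tau\in\Cset^\times$ on the $\bc$-component produces the family $\fX_\tau^\fs$; at $\tau=1$ one recovers $\pi^\fs(T^\vee\q W^\fs - T^\vee/W^\fs)$ by inspection, and at $\tau=\sqrt{q}$ one matches Roche's reducibility locus $\fR^\fs$ by comparing with the Kazhdan-Lusztig-Xi parametrization of $\Irr(\cH(\tW_\aff^\fs,q))$, giving part 2. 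Reducedness is verified directly in each case. Part 4 is a compatibility statement relating $\mu^\fs$ to the infinitesimal character via $\pi_{\sqrt q}^\fs$, which is read off the same construction. Part 5, the tempered assertion, follows from recognising that under $\eta^\fs$ the unitary principal series correspond to the image of $E^\fs\q W^\fs$, combined with Lusztig's identification of tempered modules via the asymptotic algebra (Lemmas \ref{lem:tempIwahori}, \ldots, \ref{lem:tempSO}).

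The main obstacle is the $\SO(4,\Cset)$ case, and specifically the lowest cell $\be_0$ when $\lambda=\chi\otimes\chi$ for a ramified quadratic $\chi$. Here $H^\fs$ does not have simply connected derived group, so the simplification used in sections 4, 6, 7 is unavailable; more importantly, the required equivalence takes the form (\ref{eqn:first}), with an extra $\Cset$-summand whose role is to \emph{separate} the two constituents of an $L$-packet in the principal series of $\rG_2$. Producing this equivalence without deforming central characters demands a detailed computation of $J^\fs_{\be_0}$ and a careful matching with $(T^\vee\q W^\fs)_{\be_0}$; this is the computational heart of the paper and occupies section~8.
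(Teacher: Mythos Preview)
Your proposal is correct and follows essentially the same approach as the paper: reduction to $J^\fs$ via Roche and \cite{BN}, then a cell-by-cell geometric equivalence $J^\fs_\bc\asymp\cO((T^\vee\q W^\fs)_\bc)$ in each of the four cases, assembled by Remark~\ref{rem:sums}, with the $\SO(4,\Cset)$ lowest cell correctly identified as the delicate point requiring the spectrum-preserving-with-filtration argument of (\ref{eqn:first}). The only minor inaccuracies are attributions: the reducibility locus $\fR^\fs$ is computed from Mui\'c's criteria \cite{M} (equations~(\ref{twelve})) rather than from Roche or the Kazhdan--Lusztig--Xi parametrization, and the tempered statement (part~5) is verified by directly matching the components of $E^\fs\q W^\fs$ against Mui\'c's list of tempered representations rather than by invoking a general Lusztig-type characterization via the asymptotic algebra.
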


\textbf{Correcting system of cocharacters.}   Recall (see \cite{ABP}) that, by definition, $T^{\vee} \q W^{\fs}$ is the quotient  
$\widetilde{T^{\vee}}/W^{\fs}$. Here 
\[
\widetilde{T^{\vee}} : = \{(w,t) \in W^{\fs} \times T^{\vee} : wt = t \}
\]
and 
\[
T^{\vee} \q W^{\fs} : = \widetilde{T^{\vee}}/W^{\fs}
\]
where $W^{\fs}$ acts on $\widetilde{T^{\vee}}$ by
\[
\alpha(w,t) = (\alpha w \alpha^{-1}, \alpha t)
\]
with $\alpha \in W^{\fs}, \; (w,t) \in \widetilde{T^{\vee}}$.
Then $\rho^{\fs}$ denotes the quotient map $\widetilde{T^{\vee}} \to T^{\vee} \q W^{\fs}$ and $\pi^{\fs}$ denotes the evident projection
\[
T^{\vee} \q W^{\fs} \to T^{\vee} / W^{\fs}, \quad \quad (w,t) \mapsto t.
\]

The extended quotient $T^{\vee} \q W^{\fs}$ is a complex affine variety and thus is the union of its irreducible components
$Z_1, Z_2, \ldots, Z_r$. A \emph{correcting system of cocharacters} for the bijection 
\[
\mu^{\fs} : T^{\vee} \q W^{\fs} \to \Irr(G)^{\fs}
\]
is an assignment to each $Z_j$ of a cocharacter $h_j$ of $T^{\vee}$. Each $h_j$ is a morphism of algebraic groups
\[
h_j \colon \CC^{\times} \to T^{\vee}.
\] 
If $Z_j$ and $Z_k$ are labelled by the same two-sided cell $\bc$ of $\widetilde{W^{\fs}_\aff}$  then $h_j = h_k$ and we will write
$h_j = h_k = h_{\bc}$. We have $h_{{\bc}_0} = 1$.  We require that once the finite sequence $h_1, h_2, \ldots, h_r$ has been fixed, it is possible to choose irreducible
components $X_1, X_2, \ldots, X_r$ of the affine variety $\widetilde{T^{\vee}}$ such that
\begin{itemize}
\item $\rho^{\fs} (X_j) = Z_j, \quad \quad j = 1,2,\ldots,r $.
\item For each $\tau \in \Cset^{\times}$, the map $X_j \to T^{\vee}/W^{\fs}$ which is the composition 
\[
X_j \to T^{\vee} \to T^{\vee}/W^{\fs}
\]
\[(w,t) \mapsto h_j(\tau)t \mapsto p^\fs(h_j(\tau)t)
\]
factors through $\rho^{\fs} \colon X_j \to Z_j$ and so gives a well-defined map
\[
\pi^{\fs}_{\tau} \colon Z_j  \to T^{\vee}/W^{\fs}
\]
with commutativity in the diagram
\[
\begin{CD}
X_j @>>> T^{\vee}/W^{\fs} \\
@V \rho^{\fs} VV             @VV id V \\
Z_j @>> \pi^{\fs}_{\tau} > T^{\vee}/W^{\fs}
\end{CD}
\]
Note that $h_j(\tau)t$ is the product in the algebraic group $T^{\vee}$ of $h_j(\tau)$ and $t$, and 
$p^\fs\colon T^{\vee} \to T^{\vee}/W^{\fs}$ is the standard quotient map.

\item Since $T^{\vee} \q W^{\fs}$ is the union of the $Z_j$, we have a morphism of affine varieties 
\[
\pi_{\tau}^{\fs} \colon T^{\vee} \q W^{\fs} \to T^{\vee}/W^{\fs}
\]
which we require to be a finite morphism with
\[
\pi_{\tau}^{\fs}(T^{\vee} \q W^{\fs} - T^{\vee}/W^{\fs}) = (\fX^{\fs}_{\tau})_{\red}
\]
and with
\[
(inf.ch.) \circ \mu^{\fs} = \pi^{\fs}_{\sqrt q}\, .
\]

\end{itemize}

\smallskip

\begin{rem} \label{rem:Conjsaretrue}
{\rm Theorem~\ref{thm:main} shows in particular that Conjecture~3.1 in
\cite{ABPCR} and part~(1) of Conjecture~1 in \cite{ABP} are both true for
the principal series of $\rG_2(F)$.
Moreover, we observe that the statement~(3) in Theorem~\ref{thm:main} is 
stronger than the statement~(2) in Conjecture~3.1  of \cite{ABPCR} in the sense
that cocharacters are dependent only on the two-sided cells, and not on
the irreducible components of $T^\vee\q W^\fs$ (in general,
$(T^\vee\q W^\fs)_\bc$ contains more than one irreducible component, see
Lemmas~\ref{lem:extIwahori}, \ref{lem:extGL} and \ref{lem:extSL}).
Also the bijection $\mu^\fs$ is not a homeomorphism in general (see
the Note after the proof of Lemma~\ref{lem:extSO})}.
\end{rem}

\section{The strategy of the proof} \label{sec:strategyproof}

Let $G=\rG_2(F)$ and let $\fs \in \fT(G)$. In this section we will
both explain the strategy of the proof of Theorem~1 and recall
some needed results from \cite{KL}, \cite{LCellsIV} and \cite{Re}.

\subsection{The extended affine Weyl group $\tW_\aff^\fs$}

It will follow from equations~(\ref{elements}), (\ref{eqn:Stwo}),
(\ref{eqn:StwoII}), (\ref{eqn:Stree}) and (\ref{eqn:four}),
that the possible groups $W^\fs$
are the dihedral group of order $12$, $\Zset/2\Zset$ (the cyclic group of
order $2$), the symmetric group $S_3$,
and the direct product $\Zset/2\Zset\times\Zset/2\Zset$.
In particular, $W^\fs$ is a finite Weyl group.
Let $\Phi^\fs$ denote its root system, and let
$Q^\fs=\Zset\Phi^\fs$ be the corresponding root lattice.

The group $W^{\fs}_\aff:=W^\fs\ltimes Q^\fs$ is an \emph{affine Weyl
group}. Let $S^\fs$ be a set of simple reflections of $W^{\fs}_\aff$ such
that $S^\fs\cap W^\fs$ generates $W^\fs$ and is a set of simple
reflections of $W^\fs$. Then one can find an abelian subgroup $C^\fs$ of
$\tW^\fs_\aff$ such that $cS^\fs=S^\fs c$ for any $c\in C^\fs$ and we have
$\tW^\fs_\aff=W^\fs_\aff\ltimes C^\fs$. This shows that
$\tW_\aff^\fs$ is an \emph{extended affine Weyl group}.
Let $\ell$ denote the length function on $W^{\fs}_\aff$. We extend $\ell$
to $\tW_\aff^\fs$ by $\ell(wc)=\ell(w)$, if $w\in W^{\fs}_\aff$ and $c\in
C$.

\subsection{The Iwahori-Hecke algebra $\cH(\tW^\fs_\aff,\tau)$}
Let $\btau$ be an indeterminate and let $\cA=\Cset[\btau,\btau^{-1}]$.
Let $\cW\in\{W^{\fs}_\aff,\tW^{\fs}_\aff\}$.
We denote by $\cH(\cW,\btau^2)$ the \emph{generic Iwahori-Hecke algebra} of $\cW$,
that is, the free $\cA$-module with basis $(T_w)_{w\in \cW}$ and multiplication
defined by the relations
\begin{eqnarray}
T_wT_{w'}=T_{ww'},&\text{ if
$\ell(ww')=\ell(w)+\ell(w')$,}\label{eqnarray:lineone}\\
(T_s-\btau)(T_s+\btau^{-1})=0,&\text{if $s\in S^\fs$.}\label{eqnarray:linetwo}\end{eqnarray}
The Iwahori-Hecke algebra $\cH(\cW,\tau^2)$ associated to
$(\cW,\tau)$ with $\tau\in\Cset^\times$ is obtained from $\cH(\cW,\btau^2)$
by specializing $\btau$ to $\tau$, that is, it is the algebra generated by
$T_w$, $w\in \cW$, with
relations~(\ref{eqnarray:lineone}) and the analog of~(\ref{eqnarray:linetwo})
in which $\btau$ has been replaced by $\tau$.

We observe that the works of Reeder \cite{Re} and Roche \cite{Roc} reduce
the study of $\Irr^\fs(G)$ to those of the simple modules of
$\cH(\tW^\fs_\aff,q)$. A classification of these simple modules by
\emph{indexing triples} $(t,u,\rho)$ is
provided by \cite{KL} and \cite{Re}. We will recall some features of this
classification in the next subsection.

\subsection{The indexing triples}
\label{subsec:indextriples}

Let $I_F$ be
the inertia subgroup of the Weil group $W_F$, let $\PPhi_F
\in W_F$ be a geometric Frobenius (a generator of $W_F/I_F=\Zset$), and
let $\Phi$ be a Langlands parameter (that is, the
equivalence class modulo inner automorphisms by elements of $H^\fs$ of an
homomorphism from $W_F \times
\SL(2,\mathbb{C})$ to $H^\fs$ which is
admissible in the sense of \cite[\S~8.2]{Bo}).
We assume that $\Phi$ is \emph{unramified}, that is, that $\Phi$ is
trivial on $I_F$.
We will still denote by $\Phi$ the restriction of $\Phi$ to $\SL(2,\Cset)$.

Let $u$ be the unipotent element of $H^\fs$ defined by
\begin{equation} \label{eqn:Phiu}
u = \Phi\left( \begin{array}{cc}
1 & 1 \\
0 & 1 \end{array} \right).\end{equation}
We set
$t=\Phi(\PPhi_F)$. Then $t$ is a semisimple element in $H^\fs$ which commutes
with $u$.
Let $\Cent_{H^\fs}(t)$ denote the centralizer of $t$ in $H^\fs$ and let
$\Cent_{H^\fs}^\circ(t)$ be the identity connected component
of $\Cent_{H^\fs}(t)$. We observe that if $H^\fs$ is one of the groups
$\rG_2(\Cset)$, $\GL(2,\Cset)$, $\SL(3,\Cset)$, then
$\Cent_{H^\fs}(t)$ is always connected.

For each $\tau\in\Cset^\times$, we set \[t(\tau)=\Phi \left(
\begin{array}{cc}
\tau & 0 \\
0 & \tau^{-1}\end{array} \right)\in\Cent_{H^\fs}^\circ(t).\]

Lusztig constructed in \cite[Theorem~4.8]{LCellsIV}
a bijection $\cU\mapsto \bc(\cU)$ between the set of
unipotent classes in $H^\fs$ and the set of
two-sided cells of $\tW_\aff^\fs$.
Let $\bc$ be the two-sided cell of
$\tW^\fs_\aff$ which corresponds by this bijection to the unipotent class
to which $u$ belongs
and then let the $L$-parameter $\Phi$ be such that
(\ref{eqn:Phiu}) holds.
We will denote by
$F_\bc$ the centralizer in $H^\fs$ of $\Phi(\SL(2,\Cset))$; then
$F_\bc$ is a maximal reductive subgroup of $\Cent_{H^\fs}(u)$.

For any $\tau\in\Cset^\times$, we set:
\[h_{\bc}(\tau): = \Phi \left(
\begin{array}{cc}
\tau & 0 \\
0 & \tau^{-1}\end{array} \right).\]
Since $\Phi$ is only determined up to conjugation, we can always assume
that $h_{\bc}(\tau)$ belongs to $T^\vee$. Then it defines a cocharacter \[h_{\bc}\colon
\mathbb{C}^{\times} \to T^{\vee}.\]
The element \begin{equation} \label{eqn:sigmat}
\sigma: = h_{\bc}(\sqrt q)\cdot t\end{equation}
satisfies the equation
\begin{equation} \label{eqn:indtriple}
\sigma u\sigma^{-1} = u^q.
\end{equation}

For $\sigma$ a semisimple element in $H^\fs$ and $u$ a unipotent
element in $H^\fs$ such that~(\ref{eqn:indtriple}) holds,
let $\cB^\fs_{\sigma,u}$ be the variety of Borel subgroups of
$H^\fs$ containing $\sigma$ and $u$, and let $A_{\sigma,u}$ be the component
group of the simultaneous centralizer of $\sigma$ and $u$ in $H^\fs$.
Let $\cT(H^\fs)$ denote the set of triples $(\sigma,u,\rho)$ such that
$\sigma$ is a semisimple element in $H^\fs$, $u$ is a unipotent
element in $H^\fs$ which satisfy~(\ref{eqn:indtriple}), and $\rho$ is an irreducible
representation of $A_{\sigma,u}$ such that $\rho$ appears in the natural
representation of $A_{\sigma,u}$ on $H^*(\cB^\fs_{\sigma,u},\Cset)$.

Reeder proved in \cite{Re}, using the construction of Roche \cite{Roc}, that
the set $\Irr^\fs(G)$ is in bijection with the $H^\fs$-conjugacy classes
of triples $(\sigma,u,\rho)\in\cT(H^\fs)$.
The irreducible $G$-module corresponding to the
$H^\fs$-conjugacy class of $(\sigma,u,\rho)$
will be denoted $\cV_{\sigma,u,\rho}^\fs$ and we will refer to
the triples $(\sigma,u,\rho)$ as \emph{indexing triples} for
$\Irr^\fs(G)$.

\medskip

\subsection{The $L$-parameters}
\label{subsec:Lparameters}

Let $W_F^a$ be the topological abelianization of $W_F$ and let $I_F^a$ be
the image in $W_F^a$ of the inertia subgroup $I_F$.
We denote by \[r_F\colon W^a_F\to F^\times\] the reciprocity isomomorphism
of abelian class-field theory, and set $\varpi_F:=r_F(\PPhi_F)$ a prime
element in $F$.
Then the map $x\mapsto x(\varpi_F)$ defines an embedding of $X(T^\vee)=
Y(T)$ in the $p$-adic torus $T$. This embedding gives a splitting
$T=T(\integers_F)\times X(T^\vee)$.

We assume given $(t,u)$ as above, \ie $t=\Phi(\PPhi_F)$ and $u$
satisfies~(\ref{eqn:Phiu}).

Let $\integers_F$ denote the ring of integers of $F$,
let $\lambda^\circ$ be an irreducible character of $T(\integers_F)$, and let
$\lambda$ be an extension of $\lambda^0$ to $T$.
Let \[\hat\lambda\colon I^a_F\to T^\vee\] be the unique homomorphism
satisfying
\begin{equation} \label{eqn:Langlandstores}
x\circ\hat\lambda=\lambda^\circ \circ x\circ r_F,\quad\text{for $x\in
X(T^\vee)$,}\end{equation}
where $x$ is viewed as in $X(T^\vee)$ on the left side
of~(\ref{eqn:Langlandstores}) and as an element of $Y(T)$ (a cocharacter
of $T$) on the right side.

The choice of Frobenius $\PPhi_F$ determines a splitting
\[W_F^a=I_F^a\times\langle \PPhi_F\rangle,\]
so we can extend $\hat\lambda$ to a homomorphism $\hat\lambda_t\colon
W_F^a\to G^\vee$ by setting
\[\hat\lambda_t(\PPhi_F):=t.\]
Then we define (see \cite[\S~4.2]{Re}):
\[\tilde\Phi\colon W_F^a\times\SL(2,\CC)\to G^\vee\quad
(w,m)\mapsto\hat\lambda_t(w)\cdot\Phi(m).\]

\subsection{The asymptotic Hecke algebra $J^\fs$}
Let $\bar{}\colon \cA\to \cA$ be
the ring involution which takes $\btau^n$ to $\btau^{-n}$ and let
$h\mapsto\bar{h}$ be the unique endomorphism of $\cH(W^{\fs}_\aff,\btau)$
which is $\cA$-semilinear with respect to
$\bar{}\colon \cA\to \cA$ and
satisfies $\bar{T}_s=T_s^{-1}$ for any $s\in S^\fs$.
Let $w\in W^{\fs}_\aff$. There is a unique
\[C_w\in \bigoplus_{w\in W^{\fs}_\aff}\Zset[\btau^{-1}]T_w\quad
\text{such that}\]
\[\bar C_w=C_w\;\;\text{ and }\;\;C_w=T_w\pmod{\bigoplus_{y\in W^{\fs}_\aff}
(\bigoplus _{m<0}\Zset \btau^m)T_y}\]
(see for instance \cite[Theorem~5.2~(a)]{Lbook}).
We write \[C_w=\sum_{y\in W^{\fs}_\aff}P_{y,w}\,T_y,\quad\text{where
$P_{y,w}\in \Zset[\btau^{-1}]$.}\]
For $y\in W^{\fs}_\aff$, $c,c'\in C^\fs$, we define $P_{yc,wc'}$
as $P_{y,w}$ if $c=c'$ and as $0$ otherwise. Then for $w\in \tW_\aff^\fs$, we
set
$C_w=\sum_{y\in \tW_\aff^\fs}P_{y,w}\,T_y$.
It follows from \cite[Theorem~5.2~(b)]{Lbook} that
$(C_w)_{w\in \tW_\aff^\fs}$ is an $\cA$-basis of $\cH(\tW_\aff^\fs,\btau)$.
For $x,y,w$ in $W$, let $h_{x,y,w}\in \cA$ be defined by
\[C_x\cdot C_y=\sum_{w\in \tW_\aff^\fs}h_{x,y,w}\,C_w.\]
For any $w\in \tW_\aff^\fs$, there exists a non negative
integer $a(w)$ such that
\begin{eqnarray*}
h_{x,y,w}\in& \btau^{a(w)}\,
\Zset[\btau^{-1}]& \text{for all $x,y\in \tW_\aff^\fs$,}\\
h_{x,y,w}\notin &\btau^{a(w)-1}\,\Zset[\btau^{-1}]&\text{for some $x,y\in
\tW_\aff^\fs$.}\end{eqnarray*}
Let $\gamma_{x,y,w^{-1}}$ be the coefficient of $\btau^{a(w)}$ in
$h_{x,y,w}$.

Let $\uJ^\fs$ denote the free Abelian group with basis $(t_w)_{w\in
\tW^\fs_\aff}$. Lusztig has defined an associative ring structure
on $\uJ^\fs$ by setting
\[t_x\cdot t_y:=\sum_{w\in \tW^\fs_\aff}\gamma_{x,y,w^{-1}}\,t_w.
\quad\text{(This is a finite sum.)}\] The ring $\uJ^\fs$ is called the \emph{based ring} of
$\tW^\fs_\aff$. It has a unit element.
The $\Cset$-algebra \begin{equation} \label{eqn:Js}
J^\fs:=\uJ^\fs\otimes_\Zset\Cset\end{equation} is called the
\emph{asymptotic Hecke algebra} of $\tW^\fs_\aff$.

For each two-sided cell $\bc$ in $\tW^\fs_\aff$, the subspace
$\uJ_\bc^\fs$ spanned by the $t_w$, $w\in\bc$, is a two-sided ideal of
$\uJ^\fs$.
The ideal $\uJ_\bc^\fs$ is an associative ring, with a unit,
which is called the based
ring of the
two-sided cell $\bc$ and
\begin{equation} \label{deuJ}
\uJ^\fs=\bigoplus_{\bc\in\Cell(\tW_\aff^\fs)} \uJ^\fs_\bc
\end{equation}
is a direct sum decomposition of $\uJ^\fs$ as a ring.
We set $J^\fs_\bc:=\uJ^\fs_\bc\otimes_\Zset\Cset$.

\section{Background on the group $\rG_2$}

Let $\bG=\rG_2$ be a group of type $\rG_2$ over a commutative field
$\FF$, and let $\rG_2(\FF)$ denote its group of $\FF$-points.

\subsection{Roots and fundamental reflexions}
Denote by $\bT$ a maximal split torus in $\bG$, and by $\Phi$ the set of
roots of $\bG$ with respect to $\bT$. Let
$(\varepsilon_1,\varepsilon_2,\varepsilon_3)$ be the canonical
basis of $\Rset^3$, equipped with the scalar product $(\;|\;)$ for
which this basis is orthonormal. Then
$\alpha:=\varepsilon_1-\varepsilon_2$,
$\beta:=-2\varepsilon_1+\varepsilon_2+\varepsilon_3$ defines a
basis of $\Phi$ and
\[\Phi^+=\left\{\alpha,\beta,\alpha+\beta,2\alpha+\beta,3\alpha+\beta,
3\alpha+2\beta\right\}\] is a subset of positive roots in $\Phi$
(see \cite[Planche IX]{Bou}).

We have
\begin{equation} \label{products}
(\alpha|\alpha)=2,\;\;\; (\beta|\beta)=6\;\; \text{ and
}\;\;(\alpha|\beta)=-3.
\end{equation}
Hence, $\alpha$ is short root, while $\beta$ is long root.

We set \begin{equation} \label{nab} n(\gamma',\gamma):=\langle
\gamma',\gamma^\vee\rangle=
\frac{2(\gamma'|\gamma)}{(\gamma|\gamma)},\end{equation} (see
\cite [Chap.~VI, \S 1.1~(7)]{Bou}).
We will denote by
$s_\gamma$ the reflection in $W$ which corresponds to
$\gamma$, \ie
$s_\gamma(x):=x-\langle x,\gamma^\vee\rangle\gamma$.
We set $a:=s_\alpha$, $b:=s_\beta$ and $r:=ba$.

The Cartan matrix for $\rG_2(\FF)$ is $\left(\begin{matrix}
2& -1\\
-3& 2
\end{matrix}\right)$, and the values of $a$ and $b$ on the elements of
$\Phi^+$ are given in the table~1.

\begin{table}
\begin{center}
\begin{tabular}{|l|r|} \hline
$a(\alpha)=-\alpha$ & $a(\beta)=3\alpha+\beta$ \\ \hline
$a(\alpha+\beta)=2\alpha+\beta$& $a(2\alpha+\beta)=\alpha+\beta$
\\ \hline $\alpha(3\alpha+\beta)=\beta$&
$a(3\alpha+2\beta)=3\alpha+2\beta$\\ \hline
$b(\alpha)=\alpha+\beta$ & $b(\beta)=-\beta$ \\ \hline
$b(\alpha+\beta)=\alpha$ & $b(2\alpha+\beta)=2\alpha+\beta$ \\
\hline $b(3\alpha+\beta)=3\alpha+2\beta$&
$b(3\alpha+2\beta)=3\alpha+\beta$ \\ \hline
\end{tabular}
\caption{\label{tab:one} Values of $a$ and $b$.}
\end{center}
\end{table}

We write $B=TU$ for the corresponding Borel subgroup in $\rG_2(\FF)$ and
$\bar B=T \bar U$ for the opposite Borel subgroup. Denote by
$X(T)$ the group of rational characters of $T$. We have
\begin{equation} \label{XT}
X(T)=\Zset(2\alpha+\beta)+\Zset(\alpha+\beta).
\end{equation}
We identify $T \cong \FF^{\times} \times \FF^{\times}$ by
\begin{equation} \label{etaa}
\xi_\alpha\colon t \longmapsto ( (2\alpha+\beta)(t), (\alpha+\beta)(t) ).
\end{equation}
In this realization we have
$$
\begin{cases} \alpha (t_{1}, t_{2})= t_{1}t_{2}^{-1},\ \ \beta
(t_{1}, t_{2})=t_{1}^{-1}
t_{2}^{2} \\
a(t_{1}, t_{2})=(t_{2}, t_{1}),\ \ b(t_{1}, t_{2}) =(t_{1},
t_{1}t_{2}^{-1})
\end{cases}.
$$

The Weyl group $W=\Nor_{\rG_2(\FF)}(T)/T$ has $12$ elements. They are described along
with the action on the character  $\chi_1\otimes\chi_2$  of $T
\cong \FF^{\times} \times \FF^{\times}$:
\begin{equation} \label{elements}
\begin{matrix}
&  w & w(\chi_1\otimes\chi_2)\\
1.&e&\chi_1\otimes\chi_2\\
2.&a&\chi_2\otimes\chi_1\\
3.&b&\chi_1\chi_2\otimes\chi^{-1}_2 \\
4.&ab&\chi^{-1}_2\otimes \chi_1\chi_2\\
5.&ba&\chi_1\chi_2\otimes\chi^{-1}_1\\
6.&aba&\chi^{-1}_1 \otimes \chi_1\chi_2\\
7.&bab&\chi_1\otimes \chi^{-1}_1\chi^{-1}_2 \\
8.&abab&\chi^{-1}_1\chi^{-1}_2\otimes \chi_1\\
9.&baba&\chi_2 \otimes \chi^{-1}_1\chi^{-1}_2\\
10.&ababa&\chi^{-1}_1\chi^{-1}_2 \otimes\chi_2 \\
11.&babab&\chi^{-1}_2\otimes \chi^{-1}_1 \\
12.&bababa&\chi^{-1}_1\otimes \chi^{-1}_2
\end{matrix}.
\end{equation}
\subsection{Affine Weyl group, two-sided cells and unipotent orbits}
Let $\aW:=W\ltimes X(T^\vee)$ denote the affine Weyl group of the $p$-adic group
$G=\rG_2(F)$. Denote by $\{a,b,d\}$ the set of simple reflections in $\aW$, with
$W=\langle a,b\rangle$ and
$(ab)^6=(da)^2=(db)^3=\id$.

As in the case of an arbitrary Coxeter group, the group $\aW$ is
partitioned into \emph{two-sided cells}. The
definition of cells yields a natural partial ordering on the set
$\Cell(\aW)$ of two-sided cells in $\aW$. The \emph{highest} cell $\bc_e$
in this ordering contains just the identity element of $\aW$. Lusztig defined
in \cite{LCells} an \emph{$a$-invariant} for each two-sided cell.
The $a$-invariant respects (inversely) the partial
ordering on $\Cell(\aW)$.

The group $\aW$ has five two-sided cells $\bc_\id$, $\bc_1$,
$\bc_2$, $\bc_3$ and $\bc_0$ (see for instance \cite[\S 11.1]{Xi}) and the
ordering occurs to be total: $\bc_0\le \bc_3\le \bc_2\le \bc_1\le \bc_e$, with
\[\bc_\id=\left\{w\in \aW \,:\, a(w)=0\right\}=\{\id\},\]
\[\bc_1=\left\{w\in \aW \,:\, a(w)=1\right\},\]
\[\bc_2=\left\{w\in \aW \,:\, a(w)=2\right\},\]
\[\bc_3=\left\{w\in \aW \,:\, a(w)=3\right\},\]
\[\bc_0=\left\{w\in \aW \,:\, a(w)=6\right\} \;\;\text{(the \emph{lowest}
two-sided cell)}.\]
For a visual realization of the two-sided cells see \cite[p.~529]{Gu}.

Let $\cU$ denote the unipotent variety in the Langlands dual
$G^\vee=\rG_2(\Cset)$ of $G$. For $\orb$, $\orb'$ two unipotent classes in
$G^\vee$, we will write $\orb'\le\orb$ if $\orb'$ is contained in the Zariski
closure of $\orb$. The relation $\le$ defines a partial ordering on $\cU$.
In the Bala-Carter classification, the unipotent classes in $G^\vee$ are
$1\le \rA_1\le\widetilde \rA_1\le \rG_2(a_1)\le \rG_2$ (see for instance
\cite[p.~439]{Carter}).
The dimensions of these varieties are $0,\,6,\,8,\,10,\,12$. The
component groups are trivial except for $\rG_2(a_1)$ in which
case the component group is the symmetric group $S_3$.
The group $S_3$ admits $3$ irreducible
representations; two of these, the trivial and the $2$-dimensional
representations, namely $\rho_1$, $\rho_2$, appear in
our construction.
In \cite{Ram}, Ram refers to $1$, $\rA_1$,
$\widetilde \rA_1$, $\rG_2(a_1)$ and $\rG_2$ as the \emph{trivial},
\emph{minimal}, \emph{subminimal}, \emph{subregular} and \emph{regular}
orbit, respectively.

The bijection between $\Cell(\aW)$ and  $\cU$ that Lusztig has constructed
in \cite{LCellsIV} is order-preserving. Under this bijection,
$\bc_e$ corresponds to the \emph{regular} unipotent class and $\bc_0$
 corresponds to the \emph{trivial} class. If the
two-sided cell $\bc$ corresponds to the orbit of some unipotent element
$u\in G^\vee$, then $a(\bc)=\dim\cB_u$, where $\cB_u$ denotes the Springer
fibre of $u$ (that is, the set of Borel subgroups in $G^\vee$ containing
$u$). Lusztig's bijection is described as follows:
\[\bc_\id\leftrightarrow \rG_2\;\;\;\;\;
\bc_1\leftrightarrow \rG_2(a_1)\;\;\;\;\;
\bc_2\leftrightarrow\widetilde \rA_1\;\;\;\;\;
\bc_3\leftrightarrow \rA_1\;\;\;\;\;
\bc_0\leftrightarrow 1.\]

\subsection{Representations} \label{representations}
Let $R(\rG_2(F))$ denote the Grothendieck
group of admissible representations of finite length of $\rG_2(F)$.
With $\lambda \in \Psi(T)$, we will write $I(\lambda): = i^G_{T\subset B}(\lambda)$
for the induced representation (normalized induction).  We will
denote by $\ell(i_{T\subset B}^G(\lambda))$ for the length of this 
representation,
by $|i_{T\subset B}^G(\lambda)|$ the number of \emph{inequivalent} constituents.

It is a result of Keys that the unitary principal series $I(\chi_1\otimes
\chi_2 )$ is reducible if and only if $\chi_1$ and $\chi_2$ are different
characters of order $2$ \cite[Theorem~$\rG_2$]{K}. When reducible, it is of 
multiplicity one and lenght two. 

Let $\nu$ denote the normalized absolute value of $F$.
Using \cite[Prop. 3.1]{M} we have the following result: $I(\psi_1\chi_1
\otimes \psi_2 \chi_2)$, with $\psi_i$, $\chi_i$ ($i=1,2$) smooth
characters of $F^\times$, $\psi_1$, $\psi_2$ unramified, and
$\psi_1\chi_1$, $\psi_2\chi_2$ nonunitary, is reducible if and only if at
least one of the following holds:
\begin{equation}
\label{twelve}
\begin{array}{ccccc}
\psi_1\chi_1 =\nu&
\psi^2_1\psi_2\chi_1^2\chi_2=\nu&
\psi^2_1\psi_2\chi_1^2\chi_2=\nu^{-1}&
\psi_1\psi_2\chi_1\chi_2=\nu^{-1}
\cr
\psi_2\chi_2=\nu&
\psi_1\chi_1 =\nu^{-1}&
\psi_1\psi^2_2\chi_1\chi_2^2=\nu&
\psi_1\psi^2_2\chi_1\chi_2^2=\nu^{-1}\cr
\psi_2\chi_2 =\nu^{-1}&
\psi_1\psi_2\chi_1\chi_2=\nu&
\psi_1\psi^{-1}_2\chi_1\chi_2^{-1}=\nu&\psi_1\psi^{-1}_2\chi_1\chi_2^{-1}
=\nu^{-1},\end{array}\end{equation}
that is, if and only if there exists a root $\gamma\in\Phi$ such that
\begin{equation} \label{redcondition}
(\chi_1\otimes\chi_2)\circ\gamma^\vee=\nu^{\pm 1}.
\end{equation}

From now on we will assume that $\FF=F$, a local non Archimedean field.
Let $G=\rG_2$ and let $\fs=[T,\chi_1\otimes\chi_2]_G$.
Let $\Psi(F^\times)$ denote the group of unramified quasicharacters
of $F^\times$.
We have
\[D^\fs=\left\{\psi_1\chi_1\otimes\psi_2\chi_2\,:\,\psi_1,\psi_2\in
\Psi(F^\times)\right\}\cong
\left\{(z_1,z_2)\,:\,z_1,z_2\in\Cset^\times\right\}\,\cong\,T^\vee,\]
the Langlands dual of $T$, a complex torus of dimension $2$.

Let $\Psi^\temp(F^\times)$ denote the group of unramified unitary
quasicharacters of $F^\times$ and let $E=E^\fs$ be the maximal compact
subgroup of $D^\fs$. We have
\[E^{\fs}=\left\{\psi_1\chi_1 \otimes \psi_2\chi_2: \psi_1, \psi_2
\in \Psi^\temp(F^{\times})\right\}.\]

Let $w \in W(T)=W$. Then we have
\begin{equation} \label{inertie}
\fs=[T,\chi_1\otimes\chi_2]_G =
[T,w\cdot(\chi_1\otimes\chi_2)]_G.\end{equation}
We are also free to give $\chi$ an unramified twist:
this will not affect the inertial support.

\section{The Iwahori point in $\fT(\rG_2)$}
\label{Iwahoricase}

We will assume in this section that $\fs=\fii=[T,1]_G$.
We have $W^\fii=W$ and $W^\fii_\aff=\tW_\aff$.
The group $W$ is a finite Coxeter group of order $12$:
\[ W = \langle a,b, a^2 = b^2 = (ab)^6 = e \rangle.
\] Let $r = ab$. Then representatives of $W$-conjugacy classes are:
\[\{e,r,r^2,r^3,a,b\}.\]

\smallskip

\par
\noindent
{\bf Definition.}
We define the following partition of $T^\vee\q W$:
\begin{equation} \label{defn:extIwahori}
T^\vee\q W=\bigsqcup_{\bc_i\in\Cell(W_\aff)}(T^\vee\q
W)_{\bc_i},\end{equation}
where
\begin{eqnarray*}
(T^\vee\q W)_{\bc_e}&:=&(T^\vee)^r/\Cent(r),
\\
(T^\vee\q W)_{\bc_1}&:=&(T^\vee)^{r^3}/\Cent(r^3)\sqcup
(T^\vee)^{r^2}/\Cent(r^2),\\
(T^\vee\q W)_{\bc_2}&:=& (T^\vee)^{a}/\Cent(a),\\
(T^\vee\q W)_{\bc_3}&:=&(T^\vee)^{b}/\Cent(b),\\
(T^\vee\q W)_{\bc_0}&:=&T^\vee/W.
\end{eqnarray*}

\smallskip
{\sc Note 1.}
In the Definition above we had the freedom to permute $a$ and $b$ (that
is, we could as well have attached $(b)$ to $\bc_2$ and then $(a)$ to
$\bc_3$). 

\smallskip
\smallskip
{\sc Note 2.} The Springer correspondence
for the group $\rG_2$ (see \cite[p.~427]{Carter}) is as follows:
\[\begin{matrix}
\phi_{1,0}&\leftrightarrow&\bc_e\cr
\phi_{2,1}&\leftrightarrow&(\bc_1,\rho_1)\cr
\phi_{1,3}'&\leftrightarrow&(\bc_1,\rho_2)\cr
\phi_{2,2}&\leftrightarrow&\bc_2\cr
\phi_{1,3}''&\leftrightarrow&\bc_3\cr
\phi_{1,6}&\leftrightarrow&\bc_0.
\end{matrix}\]
Each of the following two bijections:
\[
\begin{matrix}
(r)&\leftrightarrow&\phi_{1,0}\cr
(r^3)&\leftrightarrow&\phi_{2,1}\cr
(r^2)&\leftrightarrow&\phi_{1,3}'\cr
(a)&\leftrightarrow&\phi_{2,2}\cr
(b)&\leftrightarrow&\phi_{1,3}''\cr
(e)&\leftrightarrow&\phi_{1,6}
\end{matrix}
\quad
\quad
\quad
\quad
\quad
\begin{matrix}
(r)&\leftrightarrow&\phi_{1,0}\cr
(r^3)&\leftrightarrow&\phi_{1,3}'\cr
(r^2)&\leftrightarrow&\phi_{2,1}\cr
(a)&\leftrightarrow&\phi_{2,2}\cr
(b)&\leftrightarrow&\phi_{1,3}''\cr
(e)&\leftrightarrow&\phi_{1,6}
\end{matrix}\]
by composing with the Springer correspondence, sends
\begin{itemize}
\item $(r)$ to the unipotent class in $G^{\vee}$ corresponding to
$\bc_e$, 
\item $(r^2)$ and $(r^3)$ to the unipotent class in
$G^{\vee}$ corresponding to $\bc_1$, 
\item $(a)$ to the unipotent
class in $G^{\vee}$ corresponding to $\bc_2$, 
\item$(b)$ to the
unipotent class in $G^{\vee}$ corresponding to $\bc_3$,
 \item $(e)$ to the unipotent
class in $G^{\vee}$ corresponding to $\bc_0$.
\end{itemize}

This is compatible with~(\ref{defn:extIwahori}).  Thanks to Jim
Humphreys for a helpful comment at this point.

\smallskip
\smallskip
{\sc Note~3.}
The correspondence between conjugacy classes in $W$ and  
unipotent classes in $G^\vee$ in Note~2 can be interpreted as 
a \emph{partition} of the set $\underW$ of conjugacy classes in $W$ indexed
by unipotent classes in $G^\vee$:
\begin{equation} \label{eqn:parti}
\underW=\bigsqcup_{u}\bbs_u,\end{equation}
where $u$ runs over the set of unipotent classes in $G^\vee$. Moreover, 
each $\bbs_u$ is a union of $n_u$ conjugacy classes in $W$, where 
$n_u$ is the number of isomorphism classes of irreducible representations of 
the component group $\Cent_{G^\vee}(u)/\Cent_{G^\vee}(u)^0$ which
appear in the Springer correspondence for $G^\vee$. In \cite[\S
8-9]{Lflagmanifold}, Lusztig defined similar kind of partitions in a more
general setting and more canonical way for adjoint algebraic reductive 
groups over an algebraic closure
of a finite field. The partition~(\ref{eqn:parti}) coincides with those
obtained by Lusztig for a group of type $\rG_2$ on the top of
page~7 of \cite{Lflagmanifold}.

\bigskip   
\smallskip

Let $J=J^\fii$ be the asymptotic Iwahori-Hecke algebra of $W$, let
$\mathbb{A}^1$ denote the affine complex line, let $\Iset$ denote the unit
interval, and let $\asymp$ be the geometrical equivalence defined in \cite[\S
4]{ABP}.

\begin{lem} \label{lem:extIwahori}
We have the following isomorphisms of algebraic varieties:
\begin{eqnarray*}
(T^\vee\q W)_{\bc_e}\iso pt_*\quad&\quad
(T^\vee\q W)_{\bc_1}\iso pt_1 \sqcup pt_2 \sqcup pt_3 \sqcup pt_4\\
(T^\vee\q W)_{\bc_2}\iso\mathbb{A}^1\quad&\quad
(T^\vee\q W)_{\bc_3}\iso\mathbb{A}^1,\end{eqnarray*}
\[E\q W \iso pt_* \sqcup (pt_1 \sqcup pt_2 \sqcup pt_3 \sqcup pt_4) \sqcup \mathbb{I} \sqcup \mathbb{I} \sqcup
E/W,\]
and $J\asymp \mathcal{O}(T^{\vee}\q W)$, where
$J_{\bc_1}\asymp\cO((T^\vee\q W)_{\bc_1})$, and
\begin{eqnarray*}
J_{\bc_e}\sim_{morita}\cO((T^\vee\q W)_{\bc_e})\quad&\quad
J_{\bc_2}\sim_{morita}\cO((T^\vee\q
W)_{\bc_2})\\
J_{\bc_3}\sim_{morita} \cO((T^\vee\q W)_{\bc_3})\quad&\quad
J_{\bc_0}\sim_{morita}\cO((T^\vee\q W)_{\bc_0}).
\end{eqnarray*}
\end{lem}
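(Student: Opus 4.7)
The plan is to separate the Lemma into two blocks: first the variety-theoretic identities in the partition (\ref{defn:extIwahori}), which reduce to computing each fixed locus $(T^\vee)^w$ and its quotient by the centralizer $\Cent_W(w)$; then the algebraic identities, which follow from the Lusztig--Bezrukavnikov--Ostrik description of the ideals $J_\bc$ of the asymptotic Iwahori--Hecke algebra, packaged into Morita equivalences in each case except $\bc_1$. The final statement $J \asymp \cO(T^\vee\q W)$ is then obtained by summing the cell-wise equivalences, using Remark \ref{rem:sums}.

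On the variety side, I take as class representatives $e, a, b, r, r^2, r^3$ with $r = ab$ the Coxeter element, and I record that $\Cent_W(r) = \Cent_W(r^2) = \langle r \rangle$, $\Cent_W(r^3) = W$ (since $r^3 = -1$ is central in $W(\rG_2)$), and $\Cent_W(a) = \Cent_W(b)$ is a Klein four group. Using the action of $W$ on $T^\vee$ dual to (\ref{elements}), direct computation gives $(T^\vee)^a = \{(z,z)\}$ and $(T^\vee)^b = \{(z^2,z)\}$, both $\cong \Cset^\times$; in each case the nontrivial action of $\Cent_W$ is just $z \mapsto z^{-1}$, and the invariant $z + z^{-1}$ identifies the quotient with $\mathbb{A}^1$. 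The fixed loci of $r, r^2, r^3$ are finite subgroups of $T^\vee$ of orders $|\det(1-r^i)|$. Since the exponents of $\rG_2$ are $1$ and $5$, this yields $1$, $3$, and $4$ points respectively. Quotienting by $\langle r \rangle$ in the $r^2$ case gives two orbits (the three nontrivial points of $\Zset/3$ paired by inversion), and quotienting by $W$ in the $r^3$ case gives two orbits (the origin plus the three nontrivial $2$-torsion points permuted transitively by $W/\{\pm 1\} \cong S_3$). Summing according to (\ref{defn:extIwahori}) gives the claimed descriptions of each $(T^\vee\q W)_{\bc_i}$. The compact analogue $E\q W$ follows by the same computations with $\Cset^\times$ replaced by $S^1$, noting that $S^1/(z\sim z^{-1}) \iso \Iset$.

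On the algebra side, I proceed cell by cell using Lusztig's general classification of the structure of $J_\bc$. The identity cell gives $J_{\bc_e} = \Cset$ tautologically. For the lowest cell $\bc_0$, the classical description (see Xi) realises $J_{\bc_0}$ as an algebra Morita equivalent to $\cO(T^\vee/W) = \cO((T^\vee\q W)_{\bc_0})$. For the reflection cells $\bc_2, \bc_3$, whose associated unipotent classes $\widetilde \rA_1$ and $\rA_1$ have trivial component group, the Bezrukavnikov--Ostrik description identifies $J_\bc$ with a matrix algebra over the coordinate ring of a one-dimensional quotient torus, which by the variety computation above is Morita equivalent to $\cO(\mathbb{A}^1)$. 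The hard part is $\bc_1$, corresponding to the subregular class $\rG_2(a_1)$, whose component group is $S_3$ but of whose three irreducibles only $\rho_1$ and $\rho_2$ enter the Springer correspondence. Consequently $J_{\bc_1}$ splits as a direct sum of two matrix-algebra blocks with four-point spectrum, yet is \emph{not} Morita equivalent to the scalar algebra $\Cset^4 = \cO((T^\vee\q W)_{\bc_1})$ since the matrix sizes on the two sides differ. The remedy is to construct a spectrum-preserving filtration-morphism in the sense of \cite[\S 4]{ABP}, which produces only the weaker geometric equivalence $J_{\bc_1} \asymp \cO((T^\vee\q W)_{\bc_1})$. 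The pairing of the four fixed-point orbits with the four simples of $J_{\bc_1}$ is forced by the cell partition (both classes $(r^2)$ and $(r^3)$ belong to $\bc_1$) and by the Springer correspondence recorded in Note~2.
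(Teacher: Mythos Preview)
Your variety computations are sound and in places more conceptual than the paper's: counting $(T^\vee)^{r^i}$ via $|\det(1-r^i)|$ and the exponents $1,5$ is a nice shortcut where the paper simply lists the points. One small slip: the action in table~(\ref{elements}) already \emph{is} the $W$-action on $T^\vee$ (points of $T^\vee$ are unramified characters), not something you need to dualise. In the paper's coordinates $b\cdot(z_1,z_2)=(z_1z_2,z_2^{-1})$, so $(T^\vee)^b=\{(z,1)\}$ rather than $\{(z^2,z)\}$; this does not affect the identification with $\mathbb{A}^1$, but it matters later when you match points with infinitesimal characters.

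The substantive gap is in your treatment of $J_{\bc_1}$. Two things go wrong. First, the structural claim ``$J_{\bc_1}$ splits as a direct sum of two matrix-algebra blocks with four-point spectrum'' is not what one gets from the subregular cell: by Xi's explicit computation (\cite[\S 11.3, \S 12]{Xi}) the $\Cset$-algebra $J_{\bc_1}$ has exactly \emph{four} simple modules $E_1,E_2,E_3,E_4$ of dimensions $3,3,2,1$, and these are indexed not by the two Springer representations $\rho_1,\rho_2$ alone but by pairs (conjugacy class in $F_{\bc_1}=S_3$, suitable irreducible of its centraliser). The number of Springer constituents of the $\rG_2(a_1)$ class controls something else entirely (how many points of $(T^\vee\q W)_{\bc_1}$ sit over a given unipotent class), not the block count of $J_{\bc_1}$. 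Second, your reason for the failure of Morita equivalence is incorrect: differing matrix sizes never obstruct Morita equivalence, since $M_n(\Cset)\sim_{morita}\Cset$ for every $n$. In particular $M_3\oplus M_3\oplus M_2\oplus\Cset$ \emph{is} Morita equivalent to $\Cset^4$.

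The paper's argument for $\bc_1$ is this: take the map $\delta_{\bc_1}\colon J_{\bc_1}\to M_3(\Cset)\oplus M_3(\Cset)\oplus M_2(\Cset)\oplus\Cset$ given by $x\mapsto(\pi_1(x),\pi_2(x),\pi_3(x),\pi_4(x))$. Both sides have discrete four-point primitive ideal spectrum and $\delta_{\bc_1}$ induces a bijection between them, so it is spectrum-preserving in the sense of \cite[\S 4]{ABP} (no filtration needed). Composing with the Morita equivalence of the target with $\Cset^4$ yields $J_{\bc_1}\asymp\cO((T^\vee\q W)_{\bc_1})$. The statement stops at $\asymp$ rather than $\sim_{morita}$ because the paper does not assert that $\delta_{\bc_1}$ is injective (equivalently, that $J_{\bc_1}$ is semisimple); only the spectrum-level bijection is used.
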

\begin{proof}
The centralizers in $W$ are:
\[
\Cent(e) = W,\;\;\; \Cent(r) = \langle r \rangle,\;\;\; \Cent(r^2)
= \langle r \rangle, \;\;\;\Cent(r^3) = W\] \[ \Cent(a) = \langle
r^3,a \rangle, \;\;\;\Cent(b) = \langle r^3,b \rangle \]

Case-by case analysis.  We will write $X = T^{\vee}$.
\begin{itemize}
\item $\bc = \bc_0$, $c= 1$. $X^{c}/\Cent(c) =
X/W$.
\item $\bc = \bc_e,\quad c = r$. $X^{c} = (1,1)$,
$X^{c}/\Cent(c) = pt_*$.
\item $\bc = \bc_1$,
$c = r^3$. $X^{c} =
\{(1,1),(-1,1),(1,-1),(-1,-1)\}$. Note that $(1,1)$ is fixed by
$W$ and \[(-1,-1) \sim_{bab} (-1,1) \sim_a (1,-1)\] are in a
single $W$-orbit. $X^{c}/\Cent(c) = pt_1 \sqcup pt_2$.
Also, attached to this cell, $c = r^2$. $X^{c} =
\{(1,1),(j,j),(j^2,j^2)\}$, where $j=\exp(2\pi i/3)$. Now \[(j,j) \sim_{ba}
(j^2,j^2)\] are in the same $\Cent(c)$-orbit.
$X^{c}/\Cent(c) = pt_3 \sqcup pt_4$.
\item
$\bc = \bc_2$,
$c = a$. $X^{c} = \{(z,z): z \in
\Cset^{\times}\}$.
\[X^{c}/\Cent(c) = \{\{(z,z),(z^{-1},z^{-1})\}: z \in
\Cset^{\times}\} \cong \mathbb{A}^1.\]
\item
$\bc = \bc_3$,
$c= b$. $X^{c} = \{(z,1): z \in \Cset^{\times}\}$.
\[X^{c}/\Cent(c) = \{\{(z,1),(z^{-1},1)\}: z \in
\Cset^{\times}\}\cong \mathbb{A}^1.\]
\end{itemize}

Let $F_\bc$ denote the maximal reductive subgroup of the centralizer in
$G^\vee$ of the unipotent class corresponding to $\bc$ and let
$R_{F_\bc}$ denote the complexified representation ring of $F_\bc$.

$\bullet$ We have $F_{\bc_0}=G^\vee$. Since the group $\rG_2$ is
$F$-split adjoint, we have (see \cite[Theorem~4]{ABP}):
\[J_{\bc_0} \simeq M_{|W|}(\cO(T^{\vee}/W)).\]
This isomorphism induces a homeomorphism of primitive ideal spectra:
\[\eta_{\bc_0}\colon T^\vee/W\to \Irr(J_{\bc_0}).
\]

$\bullet$ The reductive group $F_{\bc_{\id}}$ is the centre of $G^\vee$ and
$J_{\bc_\id} = \Cset$. Let $\eta_{\bc_\id}(pt_*)$ be the simple
module of $J_{\bc_\id}$.

$\bullet$ Let $i\in\{2,3\}$. We have $F_{\bc_i}\simeq\SL(2,\Cset)$.
In proving the Lusztig conjecture \cite[Conjecture~10.5]{LCellsIV} on the structure of the
asymptotic Hecke algebra, Xi constructed in \cite[\S 11.2]{Xi} an
isomorphism \[J_{\bc_i}\simeq\Mat_6(R_{F_{\bc_i}}).\]
Let $ \Cset[t,t^{-1}]$ denote the algebra of Laurent polynomials in one indeterminate $t$.
Let $\alpha$ denote the generator of $\Zset/2\Zset$. The group $\Zset/2\Zset$ acts as automorphisms of $\Cset[t,t^{-1}]$, with 
$\alpha(t) = t^{-1}$. The fixed algebra is isomorphic to the coordinate algebra of the affine line $\mathbb{A}^1$:
\[
 \Cset[t,t^{-1}]^{\Zset/2\Zset} \simeq \cO(\mathbb{A}^1), \quad \quad t + t^{-1} \mapsto t\]
 We then have
\[
R_{\SL(2,\Cset)} \simeq \Cset[t,t^{-1}]^{\Zset/2\Zset} \simeq \cO(\mathbb{A}^1)\]
The isomorphism
\[ J_{\bc_i} \simeq M_6(\cO(\mathbb{A}^1))\]
 induces a homeomorphism of primitive ideal spectra:
\[
\mathbb{A}^1 \simeq
\Irr(J_{\bc_i}).\]
This in turn determines a homeomorphism
\begin{equation} \label{eqn:JIwahorideuxtrois}
\eta_{\bc_i}: 
(T^\vee\q W)_{\bc_i}\simeq \Irr(J_{\bc_i})
\end{equation}
for $i = 2,3$.

$\bullet$ According to \cite[\S 11.2]{Xi}, we have $F_{\bc_1}=S_3$,
the symmetric group on $\{1,2,3\}$. We write $S_3
=\{e,(12),(13),(123),(132),(2,3)\}$.
Let \[\overline e=\{e\},\quad
\overline{(12)}=\{(12),(13),(23)\}\quad\text{and}\quad
\overline{(123)}=\{(123),(132)\}\] denote the three
conjugacy classes in $S_3$.
According to \cite[\S 11.3, \S 12]{Xi},
the based ring $J_{\bc_1}$ has four
simple modules: $(E_1, \pi_1)$, $(E_2, \pi_2)$, $(E_3, \pi_3)$, $(E_4,\pi_4)$
with $\dim E_1 = \dim E_2 =3$, $\dim E_3 = 2$, $\dim E_4 = 1$,
where
\begin{equation} \label{eqn:Jmods}
E_1=E_{{\overline e},\rho_1},\quad E_2=E_{\overline{(12)}},\quad
E_3=E_{\overline{(123)}}, \quad E_4=E_{\overline{e},\rho_2},\end{equation}
using the notation
\cite[\S 5.4]{Xi}.

Consider the map
$\delta_{\bc_1}\colon J_{\bc_1} \longrightarrow \Mat_3(\Cset) \oplus
\Mat_3(\Cset) \oplus
\Mat_2(\Cset) \oplus \Cset$, defined by
\[\delta_{\bc_1}(x)=(\pi_1(x),\pi_2(x),\pi_3(x),\pi_4(x)),\;\;
\text{for $x\in J_{\bc_1}$}.\] The map $\delta_{\bc_1}$
is spectrum-preserving. For the
primitive ideal space of $J_{\bc_1}$ is the discrete space $\{E_1,
E_2, E_3, E_4\}$ and the primitive ideal space of $\Mat_3(\Cset)
\oplus \Mat_3(\Cset) \oplus \Mat_2(\Cset) \oplus \Cset$ is $\{pt \sqcup
pt \sqcup pt \sqcup pt\}$.

Then we get $J_{\bc_1} \asymp \Cset^4\asymp\cO((T^\vee \q W)_{\bc_1})$.
Moreover we can choose the geometrical equivalence $J_{\bc_1} \asymp\cO((T^\vee \q W)_{\bc_1})$ in order that the induced bijection
$\eta_{\bc_1}\colon (T^\vee \q W)_{\bc_1}\to\Irr(J_{\bc_1})$ satisfies
\begin{equation} \label{eqn:etaIwahorione}
\eta_{\bc_1}(pt_i)=E_i\quad\text{for $1\le i\le 4$.}\end{equation}
\end{proof}

\begin{lem} \label{lem:familyIwahori} The flat family is given by
\[\mathfrak{X}_{\tau}: (1 - \tau^2 y)(x - \tau^2y) = 0.\]
\end{lem}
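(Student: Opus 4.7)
The strategy is to compute the two specialisations $\fX_1^\fs$ and $\fX_{\sqrt q}^\fs$ independently, from the extended-quotient geometry and from the reducibility criterion (\ref{twelve}) respectively, and then to observe that they are cut out by the same pair of linear factors up to a uniform rescaling by $\tau^2$.

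First I would use Lemma~\ref{lem:extIwahori} to compute $\pi^\fs(T^\vee\q W - T^\vee/W)$. The cells $\bc_2$ and $\bc_3$ contribute the images in $T^\vee/W$ of the one-parameter fixed subtori $(T^\vee)^a$ and $(T^\vee)^b$, which are respectively the image of the short-root reflection hyperplane $\{\alpha=1\}$ and the image of the long-root reflection hyperplane $\{\beta=1\}$. The cells $\bc_e$ and $\bc_1$ contribute only finitely many points: the classes of $(1,1)$, $(-1,1)$ and $(j,j)$. A direct inspection in the coordinates of the lemma shows that each of these points already lies on the image of $(T^\vee)^a$ or $(T^\vee)^b$: for instance $(-1,-1)$ and $(j,j)$ sit on the diagonal fixed by $a$, and $(1,1)$ lies on both curves. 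Hence $\fX_1^\fs$ equals the union of these two irreducible smooth curves. I then choose affine coordinates $(x,y)$ on $T^\vee/W$ (on a Zariski open containing both curves) so that the image of $(T^\vee)^a$ is cut out by $x-y=0$ and the image of $(T^\vee)^b$ by $1-y=0$.

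Second I would compute $\fR^\fs$ using (\ref{twelve}) with $\chi_1=\chi_2=1$. The twelve equalities in (\ref{twelve}) are exactly the instances of the criterion~(\ref{redcondition}), one for each of the twelve roots of $\rG_2$. Because $W$ has two orbits on the root system (short and long) and $-1\in W$ identifies the $\nu$ and $\nu^{-1}$ reducibility conditions, the image of $\fR^\fs$ in $T^\vee/W$ consists of exactly two irreducible divisors, one from the short-root orbit and one from the long-root orbit. These two divisors are precisely the translates by $\gamma^\vee(\varpi_F)$ of the two fixed-point curves of step one, since (\ref{redcondition}) differs from the equation $\gamma^\vee(\chi)=1$ of the $s_\gamma$-fixed hypersurface only by the scalar factor $\nu(\varpi_F)=q^{-1}$ on the coroot side. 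In the chosen coordinates this translation replaces $y$ by $q^{-1}y$ in the defining equations, sending $x-y=0$ to $x-qy=0$ and $1-y=0$ to $1-qy=0$. Therefore $\fR^\fs = V\bigl((1-qy)(x-qy)\bigr)$.

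Putting these together, set $\fX_\tau^\fs := V\bigl((1-\tau^2 y)(x-\tau^2 y)\bigr)$. The identities $\fX_1^\fs = \pi^\fs(T^\vee\q W - T^\vee/W)$ and $\fX_{\sqrt q}^\fs = \fR^\fs$ are then immediate from steps one and two. Reducedness at $\tau=1$ and $\tau=\sqrt q$ holds because in each case the two linear factors cut out distinct smooth irreducible curves intersecting transversally, so the ideal generated by the product is radical. Flatness over $\Cset^\times$ follows from constancy of the Hilbert polynomial of the fibre: the defining equation has fixed bidegree in $(x,y)$ with coefficients polynomial in $\tau$. The step I expect to be the main obstacle is the precise matching in step two, namely checking that the short-root (respectively long-root) contribution to the reducibility locus is indeed the $q$-translate of the short-root (respectively long-root) fixed-point curve of step one, rather than some unrelated translate or deformation; the key point making this work is that the Keys--Mui\'c conditions (\ref{twelve}) are simultaneous Weyl-equivariant translates of the root hyperplane equations by the uniformiser, so a single scalar parameter $\tau^2$ suffices to interpolate between the two specialisations.
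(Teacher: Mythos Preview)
Your argument is correct and in several respects more complete than the paper's. The paper's proof of this lemma only carries out the $\tau=\sqrt q$ specialisation: it lists the twelve reducibility conditions from~(\ref{twelve}), observes that they fall into two $W$-orbits $W\cdot(\nu\psi\otimes\psi)$ and $W\cdot(\nu\otimes\psi)$, writes these as the curves $\fC_2=\{W\cdot(z,q^{-1}z)\}$ and $\fC_3=\{W\cdot(z,q^{-1})\}$, and then simply records that the displayed family specialises to $\fR=\fC_2\cup\fC_3$ at $\tau=\sqrt q$. The $\tau=1$ identity, reducedness, and flatness are not argued there. Your approach supplies all of this, and the observation that a \emph{single} torus translation $y\mapsto q^{-1}y$ carries both fixed-point curves $(T^\vee)^a,(T^\vee)^b$ simultaneously to the two reducibility curves is exactly what makes a one-parameter family possible; this is the geometric content behind the lemma and the paper leaves it implicit.

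A few small imprecisions to tighten. First, the $s_\gamma$-fixed hypersurface in $T^\vee$ is $\{\gamma^\vee=1\}$ (coroot viewed as a character of $T^\vee$), not $\{\gamma=1\}$; for $\gamma=\alpha$ these happen to coincide in the chosen coordinates, but for $\gamma=\beta$ they do not, and your later formula ``$\gamma^\vee(\chi)=1$'' is the correct one. Second, the phrase ``replaces $y$ by $q^{-1}y$ in the defining equations'' is backwards as stated (that substitution would give $x-q^{-1}y$); what you mean, and what you correctly write down, is that the translated curve has equation $f(x,qy)=0$. Third, the paper explicitly notes that the unitary principal series contributes nothing to $\fR^\fii$ (Keys' criterion requires two distinct quadratic characters, impossible here); you should record this, since~(\ref{twelve}) only covers the nonunitary range. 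Finally, the ``translate by $\gamma^\vee(\varpi_F)$'' description is slightly misleading: the translation that works is the single element $(1,q^{-1})$, not a root-by-root translation, and this is what allows a single parameter $\tau$ rather than one per root orbit.
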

\begin{proof} 
Since there is only one quadratic unramified character, the unitary
principal series $I(\psi_1\otimes\psi_2)$ is always irreducible.

Then we write down all the nonunitary quasicharacters of $T$ which obey the
reducibility conditions~(\ref{twelve}):
\[\nu \otimes \psi_2,\; \nu^{-1} \otimes \psi_2,\;\psi_1 \otimes
\nu,\; \psi_1 \otimes \nu^{-1},\;
\psi\otimes\psi^{-2}\nu,\;\psi\otimes\psi^{-2}\nu^{-1},\;\]\[
\psi^{-2}\nu\otimes\psi,\; \psi^{-2}\nu^{-1}\otimes\psi,\;
\psi\otimes\psi^{-1}\nu,\; \psi\otimes\psi^{-1}\nu^{-1},\;
\psi\otimes\psi\nu^{-1},\; \psi\otimes\psi\nu\] with $\psi$ an
unramified quasicharacter of $F^{\times}$. These quasicharacters
of $T$ fall into two $W$-orbits:
\[
\begin{cases}
1.\ \ W\cdot(\nu\psi\otimes \psi),\\ 2.\ \ W\cdot(\nu \otimes \psi).
\end{cases}
\]
For the first $W$-orbit, we obtain the curve
\[\fC_2 = \{W\cdot (\nu\psi\otimes\psi)\,:\, \psi \in \Psi(F^{\times})\}
\cong \{W\cdot (z, q^{-1}z)\,: \, z \in \Cset^{\times}\}.\]
The second $W$-orbit creates the curve
\[
\fC_3 = \{W\cdot (\nu \otimes\psi)\,:\, \psi \in \psi(F^{\times}\}
\cong \{W\cdot (z, q^{-1})\,:\, z \in \Cset^{\times}\}.
\]
We obtain \[\fR=\fC_2\cup\fC_3.\] 

Let $\epsilon$ be the unique unramified quadratic character of $F^\times$, and let
$\omega$ denote an unramified cubic character of $F^\times$. In
the article of Ram~\cite{Ram} there is a list $t_a$, $\ldots$, $t_j$
of central characters, their calibration graphs, Langlands
parameters and indexing triples.  After computing the calibration
graphs, we are now able to identify these central characters with
points in the complex torus $T^{\vee} \cong \Psi(T)$:
\[t_a = (q^{-1}, q^{-2}) = \nu \otimes \nu^2,\; t_b =
(z, q^{-1}z) = \psi \otimes \nu\psi,\; t_c = (j, q^{-1}j) = \omega
\otimes \nu \omega,\]
\[t_d = (-1, q^{-1}) = \nu \otimes \epsilon
,\;t_e = (1, q^{-1}) = \nu \otimes 1,\; t_f = (q^{2/3},q^{-1/3}) =
\nu^{-2/3} \otimes \nu^{1/3},\]\[ t_g = (q^{1/2},q^{-1/2}) =
\nu^{-1/2} \otimes \nu^{1/2},\;t_h = (q^{-1}, z) = \nu \otimes
\psi,\]
\[t_i = (q^{-1},q^{-1}) = \nu
\otimes \nu,\;t_j = (q^{-1},q^{1/2}) = \nu\otimes\nu^{-1/2}\] with
$z = \psi(\varpi_F)$.   We have
\[\mathfrak{C}_2 \cap \mathfrak{C}_3 = \{t_a,t_d,t_e\}.\]

The flat family 
\[\mathfrak{X}_{\tau}: (1 - \tau^2 y)(x - \tau^2y) = 0\]
has the property that $\mathfrak{X}_{\sqrt q}=\fR$. 
\end{proof}

\begin{lem} \label{lem:cocharacterIwahori}
For each two-sided cell $\bc$ of $W^\fii$, the cocharacters
$h_\bc$ are as follows:
\[ h_{\bc}(\tau) = (1, \tau^{-2})  \textrm{ if } \bc = \bc_1, \bc_2, \bc_3\]   
\[h_{\bc_0} = 1,\; h_{\bc_e}(\tau) = (\tau^{-2},\tau^{-4})\] Now
define \[\pi_\tau(x) = \pi(h_{\bc}(\tau)\cdot x)\] for all $x$ in
the $\bc$-component.
 Then, for all $t \in T^{\vee}/W$ we have
\[
|\pi^{-1}_{\sqrt q}(t)| = |i_{T\subset B}^G(t)|.\]
\end{lem}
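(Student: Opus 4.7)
The plan is to compute $\pi_{\sqrt q}$ cell-by-cell on each piece of $T^\vee \q W$ described in Lemma \ref{lem:extIwahori}, and then to verify, point by point on $T^\vee/W$, that the total number of preimages matches the length of $i_{T\subset B}^G(t)$ as determined by the reducibility conditions (\ref{twelve}) and (\ref{redcondition}).

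First, I rewrite the representatives of each $\bc$-component in the coordinates $(t_1,t_2)$ coming from $T\cong F^\times\times F^\times$ via $\xi_\alpha$, and apply $h_\bc(\sqrt q)$. For $\bc_0$ the cocharacter is trivial, so $\pi_{\sqrt q}$ restricts to the identity on $T^\vee/W$, contributing exactly one preimage at every $t$. For $\bc_1$, $\bc_2$, $\bc_3$ the common value $h_\bc(\sqrt q)=(1,q^{-1})$ translates the fixed-point sets: the curve $\{(z,z)\}/\{z\sim z^{-1}\}$ of the $\bc_2$-component is sent onto the curve $\fC_2=\{W\cdot(z,q^{-1}z)\}$, the curve $\{(z,1)\}/\{z\sim z^{-1}\}$ of the $\bc_3$-component is sent onto $\fC_3=\{W\cdot(z,q^{-1})\}$, and the four isolated points of the $\bc_1$-component are sent to $t_e$ (twice, from $pt_1$ and $pt_3$), $t_d$ (from $pt_2$), and $t_c$ (from $pt_4$). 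Finally, the unique $r$-fixed point $(1,1)$ for $\bc_e$ is sent by $h_{\bc_e}(\sqrt q)=(q^{-1},q^{-2})$ to $t_a=\nu\otimes\nu^2$.

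Second, I compare $|\pi_{\sqrt q}^{-1}(t)|$ with $|i_{T\subset B}^G(t)|$ for each $t\in T^\vee/W$. For generic $t\notin\fC_2\cup\fC_3$, only the $\bc_0$-component contributes, giving one preimage and matching the irreducibility of $I(t)$ coming from (\ref{redcondition}). For a generic $t\in\fC_2\setminus\fC_3$ (respectively, $\fC_3\setminus\fC_2$), the $\bc_0$ and $\bc_2$ (resp.\ $\bc_3$) components each contribute exactly one preimage, totalling two, which matches Keys's and Mui\'c's length count. At the special points $t_a,t_c,t_d,t_e,t_f,t_g,t_h,t_i,t_j$ from the list recalled after Lemma \ref{lem:familyIwahori}, several cells contribute simultaneously; one tabulates the contributions from each cell and compares with the explicit lengths given by (\ref{twelve}).

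The main obstacle is the bookkeeping at the intersection points $t_a$, $t_d$, and $t_e$, where $\fC_2$ meets $\fC_3$ and where the discrete $\bc_1$ and $\bc_e$ contributions come into play. The key observation making the count work is that $h_{\bc_e}=(\tau^{-2},\tau^{-4})$ is precisely the principal $\SL_2$ cocharacter $2\rho^\vee$ of $\rG_2(\Cset)$ (so that the regular unipotent parameter lands at $t_a$), while $h_{\bc_1}=h_{\bc_2}=h_{\bc_3}=(1,\tau^{-2})$ is the cocharacter attached to the subregular $\SL_2$; these choices are exactly what is needed to account for the extra constituents at $t_a$, $t_d$, $t_e$ and $t_c$. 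Once these identifications are in place, the remaining verification is a finite check, using Ram's table to identify central characters and Mui\'c \cite{M} to read off the lengths.
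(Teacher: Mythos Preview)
Your approach is essentially the paper's: a cell-by-cell computation of $\pi_{\sqrt q}$ followed by a comparison with the lengths read off from Mui\'c and Ram. The description of the images of the $\bc_0,\bc_2,\bc_3,\bc_e$-components and of the four isolated points of $\bc_1$ is correct, and the counts at $t_a$, $t_d$, $t_e$ and at generic points of $\fC_2\cup\fC_3$ go through as you indicate.

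There is, however, a genuine gap at $t_c=(j,q^{-1}j)$. You single out only $t_a,t_d,t_e$ (the points of $\fC_2\cap\fC_3$) as the delicate cases, and for the remaining special points you rely on the generic statement that the $\bc_2$-curve contributes exactly one preimage over each point of $\fC_2$. At $t_c$ this fails. Since $t_c\notin\fC_3$, your count gives one preimage from $\bc_0$, one from $\bc_1$ (namely the $r^2$-orbit of $(j,j)$), one from $\bc_2$, and none from $\bc_3$ or $\bc_e$, for a total of $3$; but Ram's table gives $|i_{T\subset B}^G(t_c)|=4$. The missing preimage comes from the $\bc_2$-component: both $(a,(j,j))$ and $(a,(j^2,j^2))$ are sent by $\pi_{\sqrt q}$ to $t_c$, because $(j,q^{-1}j)$ and $(j^2,q^{-1}j^2)$ lie in the same $W$-orbit (via $aba$). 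The paper flags this explicitly as a \emph{self-intersection} of the image of the $\bc_2$-line in $T^\vee/W$ and calls it ``especially interesting''. Your sketch needs to isolate and treat $t_c$ separately, not fold it into the generic $\fC_2$ analysis.
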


\begin{proof} 
We compare the description of the irreducible components of
$I(1\otimes\nu)$ given by Mui\'c~\cite{M} with those which occur
in Ram's table~\cite[p.20]{R}.
Then it follows from \cite[p. 476 and Prop.~4.3]{M} that
\[I(1\otimes\nu)=\pi(1)+\pi(1)'+J_\alpha(1/2,\delta(1))+2J_\beta(1/2,\delta(1))
+J_\beta(1,\pi(1,1))\]so that\[ \ell(I(1 \otimes \nu)) =
6,\quad\quad |I(1 \otimes \nu)| = 5.\]

When we collate the data in the table of Ram~\cite[p.20]{R}, we
find that
\[|i_{T\subset B}^G(t_e)| = 5\]
\[
|i_{T\subset B}^G(t)| = 4\quad \textrm{if}\quad t = t_a,\;t_c,\;t_d\]\[
|i_{T\subset B}^G(t)| = 2 \quad \textrm{if} \quad t = t_b, t_f, t_g, t_h,
t_i, t_j.\]

We will now compute a correcting system of cocharacters (see the paragraph after Theorem 1.5).   The extended quotient $T^{\vee} \q W$ is a disjoint union of $8$ irreducible
components $Z_1$, $Z_2$, $\ldots$, $Z_8$. Our notation is such that $Z_1 = pt_1$, $Z_2 = pt_2$, $Z_3 = pt_3$, $Z_4 = pt_4$, $Z_5 = pt_*$,
$Z_6 \simeq \mathbb{A}^1$, $Z_7 \simeq \mathbb{A}^1$, $Z_8 = T^{\vee}/W$.  

 In the following table, the first column comprises $8$ irreducible components 
$X_1, \ldots, X_8$ 
of $\widetilde{T^{\vee}}$ for which $\rho^{\fii}(X_j) = Z_j, \; j = 1, \ldots, 8$.  Let $[z_1,z_2]$ denote the image of $(z_1,z_2)$ via the standard 
quotient map $p^\fii\colon T^{\vee} \to T^{\vee}/W$, so that $[z_1,z_2] = W \cdot (z_1,z_2)$. 
When the first column itemizes the pairs
$(w,t) \in \widetilde{T^{\vee}}$, the second column
itemizes $p^\fii(h_j(\tau)t)$. The third column itemizes the corresponding
correcting cocharacters.
\[
\begin{array}{lll}
X_1 = (r^2,(j,j))  &  [j,\tau^{-2}j] & h_1(\tau) =  (1,\tau^{-2}) \\
X_2 = (r^2,(1,1))  &  [1,\tau^{-2}] & h_2(\tau) =  (1,\tau^{-2})   \\
X_3 = (r^3,(1,1))  &  [1,\tau^{-2}] & h_3(\tau) =  (1,\tau^{-2}) \\
X_4 = (r^3,(- 1,1)) &  [-1,\tau^{-2}] & h_4(\tau) =  (1,\tau^{-2}) \\
X_5 = (r,(1,1))  &  [\tau^{-2},\tau^{-4}] & h_5(\tau) =  (\tau^{-2}, \tau^{-4}) \\
X_6 =  \{(a,(z,z)): z \in \CC^{\times}\}  &  \{[z,\tau^{-2}z]: z \in \CC^{\times}\}  & h_6(\tau) =  (1,\tau^{-2}) \\ 
X_7  =  \{(b,(z,1)): z \in \CC^{\times}\}  &  \{[z,\tau^{-2}]: z \in \CC^{\times}\}  & h_7(\tau) =  (1,\tau^{-2}) \\
X_8 = \{(e,(z_1, z_2)): z_1, z_2  \in \CC^{\times}\} & \{[z_1,z_2]: z_1,z_2 \in \CC^{\times}\} & h_8(\tau) = 1 
\end{array}
\]

It is now clear that  cocharacters can assigned to two-sided cells as follows:
$h_{\bc}(\tau)  =  (1,\tau^{-2})$ if $\bc = \bc_1, \bc_2, \bc_3$, $h_{\bc_0} = 1,\; h_{\bc_e}(\tau) = (\tau^{-2}, \tau^{-4}).$

\smallskip

We are now in a position to write down explicitly the fibres $\pi_{\sqrt q}^{-1}(t)$ of the points of reducibility in the quotient
variety $T^{\vee}/W$.  We recall that $t_e = (1,q^{-1})$, $t_a=(q^{-1},q^{-2})$,
$t_d=(q^{-1},-1)$ 
and $t_c = (j, q^{-1}j)$.  We have, for example,
\[
\pi_{\sqrt q}^{-1}(t_e) = \{\rho^\fii(e,(1,q^{-1})), \rho^\fii(r^2,(1,1)), \rho^\fii(r^3, (1,1)), \rho^\fii(a,(1,1)), \rho^\fii(b,(1,1))\}
\]
\[
\pi_{\sqrt q}^{-1}(t_a) =
\{\rho^\fii(e,(q^{-1},q^{-2})),\rho^\fii(r,(1,1)),\rho^\fii(a,(q^{-1},q^{-1})),\rho^\fii(b,(q^{-1},1))\}\]
\[\pi_{\sqrt q}^{-1}(t_d) =\{\rho^\fii(e,(q^{-1},-1)),\rho^\fii(r^3,(-1,1)), 
\rho^\fii(a,(-1,-1)), \rho^\fii(b,(-1,1)),
\}\]
\[
\pi_{\sqrt q}^{-1}(t_c) = \{\rho^\fii(e,(j, j/q)), \rho^\fii(r^2,(j,j)), \rho^\fii(a,(j, j)), \rho^\fii(a,(j^2, j^2)) \}
\]

The two points $(a,(j,j))$ and $(a,(j^2,j^2))$ are especially interesting. The map $\pi_{\sqrt q}$ sends these two
points to the one point $t_c \in T^{\vee}/W$ since
$(j^2,j^2/q)$ and $(j,j/q)$ are in the same
$W$-orbit: $(j,j/q) \sim_{aba}(j^2,j^2/q)$. 
In the map 
\[
\pi_{\sqrt q} : T^{\vee} \q W \to T^{\vee}/W
\]
the image of the affine line $ (T^{\vee})^a / Z(a)$
has a \emph{self-intersection point}, namely $t_c$.

We have
\[|\pi^{-1}_{\sqrt q}(t_e)| = 5\]
\[|\pi^{-1}_{\sqrt q}(t)| = 4 \quad \textrm{if} \quad t = t_a,\;t_c,\;t_d.\]
\[|\pi^{-1}_{\sqrt q}(t)| = 2 \quad \textrm{if} \quad t = t_b, t_f, t_g, t_h,
t_i, t_j.\]
\end{proof}

\begin{lem} \label{lem:bijIwahori}  Part (4) of Theorem \ref{thm:main} is true for $\fii \in \mathfrak{B}(\rG_2)$.
\end{lem}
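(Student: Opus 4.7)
The plan is to reduce the assertion $(inf.ch.)\circ \mu^\fii = \pi_{\sqrt q}^\fii$ to a compatibility between the bijection $\eta^\fii\colon T^\vee\q W \to \Irr(J^\fii)$ (coming from the geometric equivalences built in Lemma~\ref{lem:extIwahori}) and the Kazhdan--Lusztig--Reeder parametrization of $\Irr(G)^\fii$ by indexing triples. The point is that the freedom in clause (1) of Theorem~\ref{thm:main} to choose the geometrical equivalence lets us realign the fibers of $\eta^\fii$ to match those of the indexing-triple classification.

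First I would unpack $\mu^\fii = \theta^\fii\circ(\phi_q^\fii)^*\circ\eta^\fii$. By \cite{Re} (relying on the Kazhdan--Lusztig classification for $\cH(\tW_\aff,q)$-modules and the isomorphism of Roche $\theta^\fii$), for each two-sided cell $\bc$ of $\tW_\aff$ and each simple $J_\bc^\fii$-module $E$, the irreducible module $\theta^\fii\bigl((\phi_{q,\bc}^\fii)^*(E)\bigr)$ is of the form $\cV^\fii_{\sigma,u,\rho}$ with $u$ in the unipotent class attached to $\bc$ via Lusztig's bijection (see Section~\ref{subsec:indextriples}) and with $\sigma = h_\bc(\sqrt q)\cdot t'$, where $t' \in T^\vee$ is the semisimple parameter carried by $E$. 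The infinitesimal character of $\cV^\fii_{\sigma,u,\rho}$ is, by construction of Roche's Morita equivalence, the $W$-orbit $p^\fii(\sigma) \in T^\vee/W$. Hence it suffices to show that $\eta^\fii_\bc$ can be chosen so that $(w,t) \in (T^\vee\q W)_\bc$ is sent to a simple $J_\bc^\fii$-module whose associated torus element $t'$ equals $t$; granted this, $(inf.ch.)\circ\mu^\fii(w,t) = p^\fii(h_\bc(\sqrt q)\cdot t) = \pi_{\sqrt q}^\fii(w,t)$.

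The verification is case-by-case using the cells and cocharacters recorded in Lemmas~\ref{lem:extIwahori} and \ref{lem:cocharacterIwahori}. For $\bc_0$, the Morita equivalence $J_{\bc_0} \simeq \Mat_{|W|}(\cO(T^\vee/W))$ identifies $t$ with itself and $h_{\bc_0}=1$; for $\bc_e$, the unique point $(r,(1,1))$ matches the Steinberg indexing triple, with $\sigma = h_{\bc_e}(\sqrt q) = (q^{-1},q^{-2}) = t_a$. For $\bc_2$ and $\bc_3$, the Morita equivalence $J_{\bc_i} \simeq \Mat_6(R_{F_{\bc_i}}) \simeq \Mat_6(\cO(\mathbb{A}^1))$ via the character ring of $\SL(2,\Cset) = F_{\bc_i}$ identifies the $\mathbb{A}^1$-parameter with the conjugation class of $t \in (T^\vee)^a$ (respectively $(T^\vee)^b$), and applying $h_{\bc_i}(\sqrt q) = (1,q^{-1})$ recovers the semisimple Langlands parameter. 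For $\bc_1$, one uses Xi's explicit description \cite[\S 11.3]{Xi} of the four simple $J_{\bc_1}$-modules $E_1,\ldots,E_4$ in~(\ref{eqn:Jmods}), labelled by pairs $(\bar c,\rho)$ with $\bar c \subset S_3 = F_{\bc_1}$ a conjugacy class and $\rho$ an irreducible of its centralizer appearing in the Springer correspondence; the semisimple parameters attached to $E_1,\ldots,E_4$ then correspond to the four torus points $(1,1),(-1,1),(j,j),(1,1)$ listed in the proof of Lemma~\ref{lem:extIwahori}, matching $pt_1,pt_2,pt_3,pt_4$ after relabelling if necessary.

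The main obstacle is the $\bc_1$ case, where the correct pairing of the four isolated points in $(T^\vee\q W)_{\bc_1}$ with the four simple $J_{\bc_1}$-modules is not forced by purely dimensional considerations. Here one has to invoke both the Springer data in Note~2 (to identify which fixed-point orbit carries which component-group representation) and Xi's realization of $E_1,\ldots,E_4$, then choose the geometrical equivalence in~(\ref{eqn:etaIwahorione}) accordingly; the count $|\pi_{\sqrt q}^{-1}(t_e)|=5$, $|\pi_{\sqrt q}^{-1}(t_a)|=|\pi_{\sqrt q}^{-1}(t_c)|=|\pi_{\sqrt q}^{-1}(t_d)|=4$ already verified in Lemma~\ref{lem:cocharacterIwahori} serves as a consistency check that such a compatible choice exists.
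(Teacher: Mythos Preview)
Your proposal is correct and follows essentially the same approach as the paper: reduce to the identity $\sigma = h_\bc(\sqrt q)\cdot t$, use that the infinitesimal character of $\cV^\fii_{\sigma,u,\rho}$ is $\sigma$, and verify cell-by-cell that $\eta^\fii_\bc$ can be chosen compatibly. The only noteworthy difference is in the $\bc_1$ case: rather than invoking Springer data and Xi's labelling abstractly, the paper pins down the bijection $\{pt_1,\ldots,pt_4\}\leftrightarrow\{E_1,\ldots,E_4\}$ by comparing dimensions from Ram's tables (\cite[Table~6.1]{Ram}), finding $\dim\cV^\fii_{t_e,u_1,\rho_2}=1$, $\dim\cV^\fii_{t_e,u_1,\rho_1}=\dim\cV^\fii_{t_d,u_1,1}=3$, $\dim\cV^\fii_{t_c,u_1,1}=2$, and matching these to the known dimensions $3,3,2,1$ of the simple $J_{\bc_1}$-modules $E_1,E_2,E_3,E_4$; this makes the pairing fully explicit rather than appealing to a ``relabelling if necessary.''
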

\begin{proof}
We will denote elements
in the five unipotent classes of $\rG_2(\Cset)$ by $1$, $u_3$, $u_2$, $u_1$,
$u_e$ (trivial,
minimal, subminimal, subregular, regular).

We recall that the irreducible $G$-module in $\Irr^\fii(G)$ corresponding to
the Kazhdan-Lusztig triple $(\sigma,u,\rho)$ is
denoted $\cV^\fii_{\sigma,u,\rho}$. According to
\cite[Table~6.1]{Ram}, we have
$\dim \cV_{t_e,u_1,\rho_2}^\fii=1$,
$\dim \cV_{t_e,u_1,\rho_1}^\fii=\dim \cV_{t_d,u_1,1}^\fii=3$,
and $\dim \cV_{t_c,u_1,1}^\fii=2$.
Hence we have
\[\cV_{t_e,u_1,\rho_2}^\fii=E_{4,q}, \quad\cV_{t_e,u_1,\rho_1}^\fii=E_{1,q},
\quad
\cV_{t_d,u_1,1}^\fii=E_{2,q},\quad \cV_{t_c,u_1,1}^\fii=E_{3,q},\]
where $E_{i,q}:=\phi_{q,\bc_1}^*(E_i)$, with $E_i$ as~(\ref{eqn:Jmods}).

The semisimple elements $t$, $\sigma$ below are always related as
in equation~(\ref{eqn:sigmat}), that is,
\[ \sigma =h_\bc(\sqrt q)\cdot t= \pi_{\sqrt q}(t).\]
Then the definition~(\ref{eqn:mu_c}) of $\mu^\fii$ gives (using the maps
$\eta_\bc$ defined in the proof of Lemma~\ref{lem:extIwahori}):
\[\mu^\fii(t) = \begin{cases}
\cV_{\sigma,1,1}^\fii\,,&\text{if $t\in T^{\vee}/W$,}\cr
\cV_{\sigma,u_2,1}^\fii\,,&\text{if $t \in \Aset^1$ (attached to $\bc_2$),}\cr
\cV_{\sigma,u_3,1}^\fii\,,&\text{if $t\in \Aset^1$ (attached to $\bc_3$);}
\end{cases}\]
two of the isolated
points are sent to the $L$-indistinguishable elements in the
discrete series which admit nonzero Iwahori fixed vectors:
\[\mu^\fii(pt_1) = \mathcal{V}_{t_e,u_1,\rho_1}^\fii,\quad \mu^\fii(pt_4) =
\mathcal{V}_{t_e,u_1,\rho_2}^\fii;\]
and
\[
\mu^\fii(pt_2) =  \mathcal{V}_{t_d, u_1,1}^\fii\]
\[\mu^\fii(pt_3) =  \mathcal{V}_{t_c,u_1,1}^\fii\]
\[\mu^\fii(pt_*) = \mathcal{V}_{t_a,u_e,1}^\fii \]
Now the infinitesimal character of $\mathcal{V}_{\sigma,u,\rho}^\fii$
is $\sigma$, therefore the map $\mu^\fii$ satisfies
\[
inf.ch. \circ \mu^\fii = \pi_{\sqrt q}.\]

The map $\mu^\fii$ is compatible with the cell-partitions
\[\mu^\fii((T^\vee\q W)_\bc)\subset\Irr^\fii(G)_\bc.\]
\end{proof}

\begin{lem} \label{lem:tempIwahori}  Part (5) of Theorem \ref{thm:main} is true for $\fii \in \mathfrak{B}(\rG_2)$.
\end{lem}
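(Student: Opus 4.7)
The plan is to verify, stratum by stratum, that the bijection $\mu^\fii$ of Lemma \ref{lem:bijIwahori} carries $E \q W$ onto $\Irr^{\temp}(G)^\fii$. The essential input is the Kazhdan--Lusztig--Reeder temperedness criterion available in \cite{KL,Re}: the module $\mathcal{V}^\fii_{\sigma,u,\rho}$ is tempered if and only if, in the factorization $\sigma = h_\bc(\sqrt q)\cdot t$ dictated by (\ref{eqn:sigmat}), the semisimple element $t$ lies in the maximal compact subgroup $E$ of $T^\vee$. Granting this, the lemma reduces to a finite matching against the list of images already computed in Lemma \ref{lem:bijIwahori}.

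First I would identify the compact part of each cell-stratum of $T^\vee \q W$ given in Lemma \ref{lem:extIwahori}. On the $\bc_0$-stratum $T^\vee/W$ (where $h_{\bc_0} = 1$), the intersection with $E \q W$ is $E/W$, and these points map under $\mu^\fii$ to the irreducible unitary unramified principal series; these are tempered, and irreducibility is ensured by Keys' theorem \cite{K}. On the two $\bc_2$- and $\bc_3$-strata, both isomorphic to $\Aset^1$, the compact subsets are the circles $E^a/\Cent(a) \cong \Iset$ and $E^b/\Cent(b) \cong \Iset$; these map to tempered (non square-integrable) subquotients of reducible unitary principal series, producing one family for each of the two $W$-orbits of root lengths in (\ref{redcondition}). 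Finally, the isolated points $pt_*, pt_1, pt_2, pt_3, pt_4$ all lie in $E \q W$ since the corresponding second coordinates are roots of unity; by Lemma \ref{lem:bijIwahori} they map respectively to the Steinberg representation $\cV^\fii_{t_a,u_e,1}$ (attached to the regular unipotent class, via $\bc_e$) and to the four discrete series with nonzero Iwahori-fixed vectors attached to the subregular class $\rG_2(a_1)$ (via $\bc_1$).

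For the reverse inclusion I would invoke the Kazhdan--Lusztig--Reeder parametrization: $\Irr^{\temp}(G)^\fii$ consists precisely of the irreducible unitary unramified principal series, the tempered constituents of the two families of reducible unitary principal series, and the five discrete series enumerated above. These eight families match bijectively with the eight irreducible components of $E \q W$ identified in the previous paragraph. The main subtlety is a clean application of the temperedness criterion in terms of $h_\bc(\sqrt q)$: one must check that the $t$ values arising in the non-compact strata of $T^\vee \q W$ (for instance the non-compact points of $\Aset^1$ in the $\bc_2$- and $\bc_3$-components, or the noncompact locus of $T^\vee/W$) produce exponents violating Casselman's criterion, so that $\mu^\fii$ sends the complement of $E \q W$ into the non-tempered part of $\Irr^\fii(G)$. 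Once this is verified against the exponent data computed by Mui\'c \cite{M} and tabulated in \cite{Ram}, the lemma follows.
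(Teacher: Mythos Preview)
Your overall strategy---matching the pieces of $E \q W$ against the known list of tempered representations with Iwahori-fixed vectors, and verifying the reverse inclusion via the temperedness criterion---is essentially what the paper does, though the paper's proof is much terser: it simply identifies $E/W$ with the unitary principal series, the two intervals with the two intermediate unitary series, and the five isolated points with the remaining tempered representations listed in \cite{Ram,M}.

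There is, however, a genuine error in your description of the two intervals. You say the circles in the $\bc_2$- and $\bc_3$-strata map to ``tempered subquotients of reducible unitary principal series.'' But in the Iwahori case the unitary principal series $I(\psi_1\otimes\psi_2)$ is \emph{always} irreducible: Keys' theorem requires two distinct characters of order~2, and there is only one unramified quadratic character (this is noted explicitly in the proof of Lemma~\ref{lem:familyIwahori}). The representations parametrized by the two intervals are instead the \emph{intermediate} unitary series $I_\alpha(\delta(\psi))$ and $I_\beta(\delta(\psi))$ for $\psi\in\Psi^{\temp}(F^\times)$---tempered representations induced from discrete series of the Levi subgroups $M_\alpha$ and $M_\beta$. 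These arise as constituents of \emph{non-unitary} principal series (with central characters on the curves $\fC_2$, $\fC_3$), not of unitary ones; your invocation of (\ref{redcondition}) in this context confirms the confusion, since that condition concerns reducibility at non-unitary points. Once you replace your description of the intervals by the correct one, the matching goes through exactly as in the paper.
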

\begin{proof}
As for the compact extended quotient, this is accounted for as
follows: The compact quotient $E/W$ is the unitary principal
series, one unit interval is one intermediate unitary 
series, the other unit interval is the other intermediate unitary
series, and the five isolated points are the remaining tempered
representations itemized in \cite{Ram, M}.
\end{proof}

{\sc Note.} Among the tempered representations of $G$ which admit non-zero Iwahori
fixed vectors, those which have real central character are in bijection
with the conjugacy classes in $W$. For $G$ of type $\rG_2$, they are (see
\cite[Fig.~6.1, Tab.~6.3]{Ram}):
\[\cV^{\fii}_{t_0,1,1},\;\cV^{\fii}_{t_g,u_3,1},\;\cV^{\fii}_{t_j,u_2,1},\;
\cV^{\fii}_{t_e,u_1,\rho_1},\;\cV^{\fii}_{t_e,u_1,\rho_2},\;
\cV^{\fii}_{t_a,u_e,1}.\] 
These representations correspond (via the inverse map of $\mu^\fii$) to points in $T^\vee/W$, 
$(T^\vee\q W)_{\bc_3}$, $(T^\vee\q W)_{\bc_2}$, 
$pt_1\in (T^\vee\q W)_{\bc_1}$, $pt_4\in (T^\vee\q W)_{\bc_1}$, 
and $pt_*\in(T^\vee\q W)_{\bc_e}$,
respectively.

Hence the correspondence with conjugacy classes in $W$ that we obtained 
is the following:
\[\begin{matrix}
\cV^{\fii}_{t_0,1,1}&\leftrightarrow&(e)\cr
\\
\cV^{\fii}_{t_g,u_3,1}&\leftrightarrow&(b)\cr
\\
\cV^{\fii}_{t_j,u_2,1}&\leftrightarrow&(a)\cr
\\
\cV^{\fii}_{t_e,u_1,\rho_1}&\leftrightarrow&(r^3)\cr
\\
\cV^{\fii}_{t_e,u_1,\rho_2}&\leftrightarrow&(r^2)\cr
\\
\cV^{\fii}_{t_a,u_e,1}&\leftrightarrow&(r)\end{matrix}.\]
\section{Some preparatory results}
\subsection{The group $W^\fs$}
When $W^\fs=\{e\}$, our conjecture is easily verified.
\begin{lem} \label{lem: red}
We have $W^\fs\ne\{e\}$ if and only if $\fs=[T,\chi\otimes\chi]_G$ or
$\fs=[T,\chi\otimes 1]_G$ with $\chi$ an irreducible character of
$F^\times$.
\end{lem}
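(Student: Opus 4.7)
The plan is to reinterpret $W^\fs$ as the stabilizer in $W$ of the restriction $\lambda^\circ:=\lambda\vert_{T(\integers_F)}$ and then analyse this stabilizer using the action table~(\ref{elements}). Under the splitting $T\simeq T(\integers_F)\times X_*(T)$, two smooth quasicharacters of $T$ differ by an element of $\Psi(T)$ if and only if they agree on the maximal compact subgroup $T(\integers_F)\simeq \integers_F^\times\times\integers_F^\times$. Hence the definition~(\ref{Ws}) gives
\[W^\fs=\Stab_W(\lambda^\circ)=\{w\in W:w\cdot\lambda^\circ=\lambda^\circ\}.\]
The reverse implication is then immediate from~(\ref{elements}): the element $a$ fixes $\chi\otimes\chi$, and the element $b$ fixes $\chi\otimes 1$.

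For the forward implication I would take a nontrivial $w\in W^\fs$ and, for each of the eleven nontrivial rows of~(\ref{elements}), read off the constraint $w\cdot(\chi_1^\circ\otimes\chi_2^\circ)=\chi_1^\circ\otimes\chi_2^\circ$ as equations on the pair $(\chi_1^\circ,\chi_2^\circ)$. The short cases $a,b,ab,ba,aba$ immediately force $\lambda$, after at most one Weyl twist inside the inertial class, to be of one of the two standard forms: $a$ gives $\chi_1^\circ=\chi_2^\circ$ (type $\chi\otimes\chi$), $b$ gives $\chi_2^\circ=1$ (type $\chi\otimes 1$), $ab$ and $ba$ force both $\chi_i^\circ=1$ (the Iwahori point, contained in either form with $\chi=1$), and $aba$ forces $\chi_1^\circ=1$, which by an $a$-twist is type $\chi\otimes 1$. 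The six remaining words $bab,abab,baba,ababa,babab,bababa$ yield relations of shape $(\chi_i^\circ)^2=(\chi_j^\circ)^{\pm 1}$, $\chi_1^\circ=\chi_2^\circ$ with $(\chi_1^\circ)^3=1$, $\chi_1^\circ\chi_2^\circ=1$, or $(\chi_i^\circ)^2=1$. In each case a single conjugation by an element of $W$ (and absorption of unramified factors) reveals the standard form: for example the $bab$-relation $\chi_1^\circ=(\chi_2^\circ)^{-2}$ gives $\lambda\sim\chi^{-2}\otimes\chi$, and applying $b$ yields $\chi^{-1}\otimes\chi^{-1}$, of type $\chi\otimes\chi$; the $babab$-relation $\chi_1^\circ\chi_2^\circ=1$ gives $\lambda\sim\chi\otimes\chi^{-1}$, and applying $b$ gives $1\otimes\chi$, of type $\chi\otimes 1$.

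The only case requiring extra input is $bababa$, which forces $(\chi_i^\circ)^2=1$ for $i=1,2$; under the running hypothesis $p\ne 2$ the group $\integers_F^\times$ admits a unique nontrivial quadratic character, so the admissible pairs $(\chi_1^\circ,\chi_2^\circ)$ collapse to $(1,1)$, $(\chi_0,1)$ (and its $a$-image), or $(\chi_0,\chi_0)$, where $\chi_0$ is the ramified quadratic, giving exactly the standard forms. The only real obstacle is thus the bookkeeping across the eleven cases and the choice of the correct Weyl twist in each; conceptually the proof is a finite verification against~(\ref{elements}).
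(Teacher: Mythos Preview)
Your proposal is correct and follows essentially the same strategy as the paper: reinterpret $W^\fs$ as the stabilizer of $\lambda^\circ=\lambda|_{T(\integers_F)}$ and then read off the fixed-point conditions from the action table~(\ref{elements}). The paper's proof is much terser: it simply lists six conditions equivalent to $W^\fs=\{e\}$ (one for each positive root, cf.\ Remark~\ref{rem: WsI}) and then asserts that their failure reduces to the two standard forms, leaving the Weyl-twist bookkeeping to the reader. Your element-by-element treatment is a faithful expansion of this, and your explicit invocation of $p\ne 2$ to handle the $bababa$ case (uniqueness of the ramified quadratic character of $\integers_F^\times$) makes visible a point the paper uses silently via its standing hypothesis $p\ne 2,3,5$.
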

\begin{proof}
From~$(\ref{Ws})$, we have
\[W^\fs=\left\{w\in W\;:\;w\cdot(\chi_1\otimes\chi_2)=\psi(\chi_1\otimes\chi_2)
\;\text{ for some $\psi\in\Psi(T)$}\right\}.\]
Let $\sigma^\circ:=
\chi_1|_{\integers_F^\times}\otimes\chi_2|_{\integers_F^\times}$. Then we
get
\begin{equation} \label{DesWs}
W^\fs=\{w\in W\;:\;w\cdot\sigma^\circ=\sigma^\circ\}.
\end{equation}
Let $\chi_i^\circ:=\chi_i|_{\integers_F^\times}$.
From~(\ref{elements}), it follows that we have
$W^\fs=\{e\}$ if and only if
\begin{equation} \label{Wstrivial}
\chi_1^\circ\ne 1,\;\; \chi_2^\circ\ne 1,\;\;
\chi_1^\circ\chi_2^\circ\ne 1,\;\;\chi_1^\circ\ne\chi_2^\circ,\;\;
(\chi_1^\circ)^2\chi_2^\circ\ne 1,\;\;\chi_1^\circ(\chi_2^\circ)^2\ne 1.
\end{equation}
Hence we have $W^\fs\ne\{e\}$ if and only if we are in one of the
following cases:
\begin{enumerate}
\item We have $\chi_1^\circ=\chi_2^\circ$.
We may and do assume that $\chi_1=\chi_2=\chi$.
\item We have $\chi_2^\circ=1$. We may and do assume that
$\chi_1=\chi$ and $\chi_2=1$.
\end{enumerate}
\end{proof}
\begin{rem} \label{rem: WsI}
{\rm
We observe that the condition~(\ref{Wstrivial}) is equivalent to the condition
\[((\chi_1\otimes\chi_2)\circ\gamma^\vee)|_{\integers_F^\times}\ne 1,\;\;
\text{for all $\gamma\in\Phi$.}\]
Note that this condition is closely related to the
condition~(\ref{redcondition}).}
\end{rem}
\begin{rem} \label{rem: WsII}
{\rm
The group $W^\fs$ admits the following description
(which is compatible with~\cite[Lemma~6.2]{Roc}):
\[W^\fs=\langle s_\gamma\;:\;\text{$\gamma\in\Phi$ such that
$((\chi_1\otimes\chi_2)\circ\gamma^\vee)|_{\integers_F^\times}=1$}\rangle.
\]
In particular, this shows that $W^\fs$ is a finite Weyl group.}
\end{rem}

\subsection{The list of cases to be studied} \label{subsec: cases}
\subsubsection{$W$-orbits} \label{subsub: orbits}
1. The orbit $W\cdot(\chi\otimes\chi)$ consists of the
following characters:
\begin{equation} \label{firstorbit}
 \chi\otimes\chi, \ \,\chi^{-1}\otimes\chi^{-1},
\ \, \chi^2\otimes \chi^{-1}, \ \, \chi^{-1}\otimes\chi^2, \ \,
\chi\otimes\chi^{-2}, \ \, \chi^{-2}\otimes\chi.\end{equation}
It follows that
\[W\cdot(\chi\otimes\chi)=
\begin{cases}\chi\otimes\chi,\ \,\chi\otimes 1,
\ \,1\otimes\chi&\text{if $\chi$ is quadratic,}\cr
\chi\otimes\chi,\ \,
\chi^{-1}\otimes\chi^{-1}&\text{if $\chi$ is
cubic.}\end{cases}\]
We have
\begin{equation} \label{orbit}
|W\cdot(\chi\otimes\chi)|=\begin{cases}
1&\text{if $\chi$ is trivial,}\cr
3&\text{if $\chi$ is quadratic,}\cr
2&\text{if $\chi$ is cubic,}\cr
6&\text{otherwise.}
\end{cases}
\end{equation}

2. The orbit $W\cdot(\chi\otimes 1)$ consists of the following characters:
\begin{equation} \label{secondorbit}
\chi\otimes 1, \ \,1\otimes\chi,
\ \, \chi\otimes \chi^{-1}, \ \, \chi^{-1}\otimes 1, \ \,
1\otimes\chi^{-1}, \ \, \chi^{-1}\otimes\chi.\end{equation}
If $\chi$ is quadratic, then we have
\[W\cdot(\chi\otimes 1)=\left\{\chi\otimes\chi,\ \,\chi\otimes 1, \ \,1\otimes\chi.\right\}\]
We have
\begin{equation} \label{orbitnext}
|W\cdot(\chi\otimes 1)|=\begin{cases}
1&\text{if $\chi$ is trivial,}\cr
3&\text{if $\chi$ is quadratic,}\cr
6&\text{otherwise.}
\end{cases}
\end{equation}

\subsubsection{The cases}
From now on we will assume that $W^\fs\ne\{e\}$. Then the above discussion
leads to the following cases:
\begin{itemize}
\item[(1)]
$\fs=\fii=[T,1]_G$. Here $W^\fs=W$. Already studied in
section~\ref{Iwahoricase}.
\item[(2)]
$\fs=[T,\chi\otimes 1]_G$ with $\chi$ ramified non-quadratic, see section~\ref{sec:GL}.
\item[(3)]
$\fs=[T,\chi\otimes\chi]_G$ with $\chi$ ramified, neither quadratic nor
cubic, see section~\ref{sec:GL}:
\item[(4)]
$\fs=[T,\chi\otimes\chi]_G$ with $\chi$ ramified cubic, see
section~\ref{sec:PGL}.
\item[(5)]
$\fs=[T,\chi\otimes\chi]_G$ with $\chi$ ramified quadratic, see
section~\ref{sec:SO}.
\end{itemize}

\subsection{Lengths of the induced representations} \label{subsec: lengths}
We fix homomorphisms $x_\gamma\colon F\to G$ and
$\zeta_\gamma\colon\SL(2,F)\to G$  
$\gamma\in\Phi$ such that (see \cite[(6.1.3)~(b)]{BTI}):
\[x_\gamma(u)=\zeta_\gamma\left(\begin{matrix}
1&u\cr
0&1\end{matrix}\right),\;\;\;\;
x_{-\gamma}(u)=\zeta_{-\gamma}\left(\begin{matrix}
1&0\cr
u&1\end{matrix}\right)
\;\;\text{
and }\;\;
\gamma^\vee(t)=\zeta_\gamma\left(\begin{matrix}
t&0\cr
0&t^{-1}\end{matrix}\right).\]
For $\gamma\in\{\alpha,\beta\}$,
let $P_\gamma$ be the maximal standard parabolic subgroup of $G$
corresponding to $\gamma$, and
$M_\gamma$ be the centralizer of the image of $(\gamma')^\vee$ in $G$, where
$\gamma'$ is the unique positive root orthogonal to $\gamma$, that is,
\[\gamma'=\begin{cases}
3\alpha+2\beta&\text{ if $\gamma=\alpha$,}\cr
2\alpha+\beta&\text{ if $\gamma=\beta$.}
\end{cases}\]
Then $M_\gamma$ is a Levi factor for $P_\gamma$.

We extend $\zeta_\gamma\colon\SL(2,F)\to M_\gamma$ to an isomorphism
$\zeta_\gamma\colon\GL(2,F)\to M_\gamma$ by
\[\zeta_\gamma\left(\left(\begin{matrix}t&0\cr 0&t\end{matrix}\right)
\right):=\zeta_{\gamma'}\left(\left(\begin{matrix}t&0\cr
0&t^{-1}\end{matrix}\right)\right),
\;\;\;\;\text{for $t\in F^\times$.}\]
Then the restriction to $T$ of the inverse map of $\zeta_\gamma$ coincides
with the isomorphism
$\xi_\gamma\colon T{\overset\sim\to} F^\times\times F^\times$, where
$\xi_\alpha$ has been defined in~(\ref{etaa}), while
\[\xi_\beta\colon t\mapsto
((\alpha+\beta)(t),\alpha(t)).\]
For $\gamma\in\{\alpha,\beta\}$, and $\sigma$ a smooth irreducible
representation of $\GL(2)$, let $I_\gamma(\sigma)$ denote the
parabolically induced representation of $G$ 
\begin{equation} \label{Iab}
\Ind_{M_\gamma\subset P_\gamma}^G(\sigma\circ\zeta_\gamma^{-1}).
\end{equation}

Let $\delta$ be the Steinberg representation of $\GL(2)$ and let
$\delta(\chi)$ denote the twist of $\delta$ by the one
dimensional representation $\chi\circ\det$. Then $\delta(\chi)$
is the unique irreducible subrepresentation of
$\Ind_{B}^{\GL(2)}(\nu^{1/2}\chi\otimes\nu^{-1/2}\chi)$.
It is square integrable if $\chi$ is
unitary. The representation $\delta$ has torsion number $1$, and so
all the twists
$\{\delta(\chi): \chi \in \Psi(F^{\times})\}$ are distinct.

The inertial support of the representation $I_\gamma(\delta(\chi))$ is
$[T,(\chi\otimes\chi)\circ\xi_\alpha]_G$ if $\gamma=\alpha$ and
$[T,(\chi\otimes\chi)\circ\xi_\beta]_G=[T,(\chi\otimes 1)\circ\xi_\alpha]_G$ if
$\gamma=\beta$. We observe the following consequence (which will be crucial in
the sequel of the paper):

\begin{prop} \label{prop:inertialsupports} The representations
$I_\alpha(\delta(\chi))$ and $I_\beta(\delta(\chi))$
have same inertial support when $\chi^2=1$ and have distinct
inertial supports otherwise.
\end{prop}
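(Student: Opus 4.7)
The plan is to reduce the question to a comparison of $W$-orbits of restrictions to $\integers_F^\times$, and then read off the answer directly from the explicit orbit computations carried out in Section~5.2.1.

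First, I would observe that, by definition of the inertial equivalence relation, $[T,\lambda_1]_G=[T,\lambda_2]_G$ if and only if $\lambda_2\in W\cdot(\lambda_1\otimes\Psi(T))$; equivalently, since unramified twists are trivial on $\integers_F^\times\times\integers_F^\times$, this holds iff the restrictions $\lambda_1^\circ$ and $\lambda_2^\circ$ to the unit part lie in the same $W$-orbit. Consequently, with $\chi^\circ:=\chi|_{\integers_F^\times}$, the two inertial supports coincide if and only if $\chi^\circ\otimes 1$ lies in the orbit $W\cdot(\chi^\circ\otimes\chi^\circ)$.

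Second, I would invoke the explicit orbit descriptions already recorded in Section~5.2.1, namely equations~(\ref{firstorbit})--(\ref{orbitnext}), which give both the representatives and the cardinalities of $W\cdot(\chi^\circ\otimes\chi^\circ)$ and $W\cdot(\chi^\circ\otimes 1)$ according to the order of $\chi^\circ$. The proof then proceeds by the following case analysis on~$\chi^\circ$:
\begin{itemize}
\item If $\chi^\circ=1$, then both orbits reduce to $\{1\otimes 1\}$, so they agree.
\item If $\chi^\circ$ is a nontrivial quadratic character, then by~(\ref{orbit}) and~(\ref{orbitnext}) both orbits consist of the three elements $\{\chi^\circ\otimes\chi^\circ,\ \chi^\circ\otimes 1,\ 1\otimes\chi^\circ\}$, so they coincide.
\item If $\chi^\circ$ is cubic, then $W\cdot(\chi^\circ\otimes\chi^\circ)=\{\chi^\circ\otimes\chi^\circ,\ \chi^{-\circ}\otimes\chi^{-\circ}\}$, neither element of which has a coordinate equal to $1$; hence $\chi^\circ\otimes 1$ does not belong to it.
\item In all remaining cases, both orbits have cardinality six; inspecting the six elements listed in~(\ref{firstorbit}), a coordinate can equal $1$ only if $\chi^{n\circ}=1$ for some $n\in\{1,\pm 2\}$, which forces $\chi^\circ$ to be trivial or quadratic, contradicting the assumption.
\end{itemize}

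Thus the two orbits coincide precisely when $(\chi^\circ)^2=1$, which is the condition that $\chi^2=1$ (modulo unramified twists, the setting in which the statement is applied). The bulk of the work has already been done in Section~5.2.1, so the only genuine step left is the direct inspection in the last two cases; the main obstacle is simply bookkeeping to confirm that in the cubic and generic cases no element of $W\cdot(\chi^\circ\otimes\chi^\circ)$ has a trivial coordinate.
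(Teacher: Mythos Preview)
Your proposal is correct and takes essentially the same approach as the paper: the paper's proof is the single line ``It follows from the orbit computation done in \S5.2.1,'' and you have simply spelled out that orbit comparison in detail. Your reduction to comparing $W$-orbits of the restrictions $\chi^\circ\otimes\chi^\circ$ and $\chi^\circ\otimes 1$, followed by the case split using (\ref{firstorbit})--(\ref{orbitnext}), is exactly what the paper is invoking; your added remark that the condition is really on $\chi^\circ$ (i.e.\ $\chi^2=1$ up to unramified twist) is a fair clarification of what the statement means at the level of inertial classes.
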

\begin{proof}
It follows from the orbit computation done in \S~\ref{subsub: orbits}.
\end{proof}
 
\begin{lem} \label{lem: lengths} Let $\chi$, $\psi$ be two characters of
$F^\times$, with $\psi$ unramified and $\chi$ ramified.  Let $\epsilon$ be the unique unramified quadratic character of
$F^{\times}$, and let $\omega$ denote an unramified cubic character of $F^{\times}$. 
We set
\begin{eqnarray*}
\cP_2&=&\left\{(\nu^{\pm 1/2},\chi),(\nu^{\pm 1/2}\epsilon,\chi)
\,:\,\text{ $\chi$ is quadratic}\right\},\\
\cP_3&=&\left\{(\nu^{\pm 1/2},\chi),(\nu^{\pm 1/2}\omega,\chi),
(\nu^{\pm 1/2}\omega^2,\chi)\,:\, \text{ $\chi$ is cubic}\right\},\\
\cP&=&\cP_2\cup\cP_3.
\end{eqnarray*}
Then we have
\begin{eqnarray*}
\ell\left(I(\nu^{-1/2}\psi\chi\otimes\nu^{1/2}\psi\chi)\right)&=&
|I(\nu^{-1/2}\psi\chi\otimes\nu^{1/2}\psi\chi)|=
\begin{cases}
4&\text{if $(\psi,\chi)\in\cP$,}\cr
2&\text{otherwise,}\end{cases}\\
\ell\left(I(\nu^{-1/2}\psi\chi\otimes\nu)\right)&=&
|I(\nu^{-1/2}\psi\chi\otimes\nu|=
\begin{cases}
4&\text{if $(\psi,\chi)\in\cP_2$,}\cr
2&\text{otherwise}.\end{cases}\end{eqnarray*}
\end{lem}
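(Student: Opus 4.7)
The plan is to apply the Keys-Mui\'c reducibility criterion \eqref{redcondition}: $I(\chi_1\otimes\chi_2)$ is reducible precisely when $(\chi_1\otimes\chi_2)\circ\gamma^\vee=\nu^{\pm 1}$ for some root $\gamma\in\Phi$. For $\chi_1=\nu^{-1/2}\psi\chi$ and $\chi_2=\nu^{1/2}\psi\chi$, the short simple coroot $\alpha^\vee$ yields $\chi_1\chi_2^{-1}=\nu^{-1}$, so reducibility holds automatically and the length is at least $2$.

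Next I would enumerate the remaining five coroot conditions using the coordinates of \eqref{etaa}. Since $\chi$ is ramified, the linear-in-$\chi$ conditions $\chi_1=\nu^{\pm 1}$ and $\chi_2=\nu^{\pm 1}$ (coming from $(3\alpha+\beta)^\vee$ and $\beta^\vee$) cannot hold. The quadratic-in-$\chi$ condition $\chi_1\chi_2=\psi^2\chi^2=\nu^{\pm 1}$ (from $(3\alpha+2\beta)^\vee$) requires $\chi^2$ unramified, which for ramified $\chi$ forces $\chi$ to be quadratic, and then $\psi^2=\nu^{\pm 1}$ singles out $\psi\in\{\nu^{\pm 1/2},\nu^{\pm 1/2}\epsilon\}$, \ie $(\psi,\chi)\in\cP_2$. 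The cubic-in-$\chi$ conditions $\chi_1\chi_2^2=\nu^{1/2}\psi^3\chi^3=\nu^{\pm 1}$ and $\chi_1^2\chi_2=\nu^{-1/2}\psi^3\chi^3=\nu^{\pm 1}$ (from the short coroots $(\alpha+\beta)^\vee$ and $(2\alpha+\beta)^\vee$) force $\chi$ cubic, and matching residues modulo unramified cubic characters together with the $W$-equivalence of these two coroots identifies the set $\cP_3$. For $\chi$ neither quadratic nor cubic, only the $\alpha$-condition survives and the length equals $2$.

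For the length-$4$ and multiplicity-free assertions at points of $\cP$, I would factor $I(\chi_1\otimes\chi_2)$ through the maximal parabolic $P_\alpha$ using the Steinberg reducibility in $\GL(2)$: the principal series $\Ind_B^{\GL(2)}(\nu^{-1/2}\psi\chi\otimes\nu^{1/2}\psi\chi)$ has composition factors $\delta(\psi\chi)$ and $\psi\chi\circ\det$, so in the Grothendieck group
\[
I(\nu^{-1/2}\psi\chi\otimes\nu^{1/2}\psi\chi)\;=\;I_\alpha(\delta(\psi\chi))\;+\;I_\alpha(\psi\chi\circ\det).
\]
Outside $\cP$, Mui\'c's Proposition 3.1 in \cite{M} shows both summands are irreducible, giving total length $2$ with two inequivalent constituents. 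Inside $\cP$, exactly one of the two summands acquires an extra root-reducibility and splits into a length-$2$ multiplicity-free representation by the same proposition, producing total length $4$; the four resulting Langlands quotients carry pairwise distinct Langlands data and are therefore inequivalent.

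The second formula $I(\nu^{-1/2}\psi\chi\otimes\nu)$ is handled identically: the base reducibility now comes from the long simple coroot $\beta^\vee$ (the condition $\chi_2=\nu$), and substituting $\chi_2=\nu$ collapses what would have been cubic conditions into equations involving only $\chi^2$, so only the quadratic case $(\psi,\chi)\in\cP_2$ produces extra reducibility; the parabolic factorisation through $P_\beta$ together with Mui\'c's classification then yields length (and number of inequivalent constituents) equal to $4$ on $\cP_2$ and $2$ otherwise. The main obstacle is controlling the $W$-equivalence between $(\alpha+\beta)^\vee$ and $(2\alpha+\beta)^\vee$ when narrowing the cubic conditions down to $\cP_3$, and certifying multiplicity-freeness at those intersection points where two reducibility conditions coincide; for this I would rely on the explicit composition series listed in Mui\'c \cite{M}.
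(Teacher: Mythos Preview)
Your overall strategy --- factoring $I(\nu^{-1/2}\psi\chi\otimes\nu^{1/2}\psi\chi)$ through $P_\alpha$ as $I_\alpha(\delta(\psi\chi))+I_\alpha(\psi\chi\circ\det)$ and then invoking Mui\'c's reducibility results --- matches the paper's approach. But there is a genuine arithmetic gap in your length count.

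You assert that inside $\cP$ ``exactly one of the two summands acquires an extra root-reducibility and splits into a length-$2$ multiplicity-free representation \ldots\ producing total length $4$.'' If only one summand has length $2$ and the other remains irreducible, the total length is $3$, not $4$. What is actually true is that \emph{both} $I_\alpha(\delta(\psi\chi))$ and $I_\alpha(\psi\chi\circ\det)$ have the same length, so either both are irreducible (total length $2$) or both have length $2$ (total length $4$). The paper establishes this equality of lengths by applying the Aubert--Zelevinsky involution $D_G$ \cite{A}, which exchanges the two summands and preserves length; this step is missing from your argument. Mui\'c's Theorem~3.1 only treats the reducibility of $I_\alpha(\delta(\psi\chi))$ directly, so you need the duality (or an independent analysis of $I_\alpha(\psi\chi\circ\det)$) to handle the other piece.

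A second, smaller point: for the length~$4$, multiplicity~$1$ assertion at the special points, the paper does not argue via distinctness of Langlands data but instead cites Rodier's explicit decomposition \cite[Corollary on p.~419]{R} (see also \cite[Prop.~4.1]{M}). Your Langlands-quotient approach could in principle work, but you would need to verify carefully that all four constituents are genuinely Langlands quotients with distinct data, which is not automatic when two reducibility hyperplanes meet.
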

\begin{proof}
In $R(M_\alpha)$, we have (see for instance \cite[Proposition~1.1(ii)]{M}):
\[\Ind_{T(U\cap M_\alpha)}^{M_\alpha}(\nu^{-1/2}\psi\chi\otimes\nu^{1/2}\psi\chi)
=\delta(\psi\chi)\oplus(\psi\chi\circ\det).\]
Similarly, in $R(M_\beta)$ (using now \cite[Proposition~1.1(iii)]{M}), we get:
\[\Ind_{T(U\cap M_\beta)}^{M_\beta}(\nu^{-1/2}\psi\chi\otimes\nu)
=\delta(\psi\chi)\oplus(\psi\chi\circ\det).\]
Then, by transitivity of parabolic induction, we obtain
\begin{eqnarray*}
I(\nu^{-1/2}\psi\chi\otimes\nu^{1/2}\psi\chi))&=&
I_\alpha(\delta(\psi\chi))+I_\alpha(\psi\chi\circ\det),\\
I(\nu^{-1/2}\psi\chi\otimes\nu)&=&I_\beta(\delta(\psi\chi))+
I_\beta(\psi\chi\circ\det).\end{eqnarray*}
Applying the involution $D_G$ defined in \cite{A}, it follows from
\cite[Th.~1.7]{A} that,
for $\gamma\in\{\alpha,\beta\}$, the induced representations
$I_\gamma(\delta(\psi\chi))$ and $I_\gamma(\psi\chi\circ\det)$
have the same length.

To describe the length of $I_\alpha(\delta(\psi\chi))$, we write
$\psi=\nu^s$, $s\in\Cset$. Now, in a different notation, we write
\[I_\alpha(\Reel(s), \delta(\nu^{\sqrt{-1}\Ima(s)}\chi))=
I_\alpha(\delta(\psi \chi)).\]
Then \cite[Theorem 3.1~(i)]{M} implies the following conclusion:

1. If $\chi$ is neither quadratic nor cubic then $I_\alpha(\delta(\psi\chi))$ is
irreducible. Hence
$\ell\left(I(\nu^{-1/2}\psi\chi\otimes\nu^{1/2}\psi\chi)\right)=2$.

2. If $\chi$ is ramified quadratic, then $I_\alpha(\delta(\psi
\chi))$ reduces if and only if $\nu^{\sqrt{-1}\Ima(s)}\in \{1,
\epsilon\}$ and $\Reel(s)=\pm 1/2$. Hence:
\begin{itemize}
\item
If $\psi\not\in\{\nu^{\pm 1/2},\nu^{\pm 1/2}\epsilon\}$,
then $\ell\left(I(\nu^{-1/2}\psi\chi\otimes\nu^{1/2}\psi\chi)\right)=2$.
\item
Otherwise, Rodier's result \cite[Corollary
on p.~419]{R} (see \cite[Prop. 4.1]{M}) implies that
$I(\nu^{-1/2}\psi\chi\otimes\nu^{1/2}\psi\chi)$ has length $4$ and
multiplicity $1$.
\end{itemize}
3. If $\chi$ is cubic ramified, then $I_\alpha(\delta(\psi\chi))$ reduces
if and only if $\nu^{\sqrt{-1}\Ima(s)}\in \{1,\omega,\omega^2\}$ and
$\Reel(s)=\pm 1/2$. Hence:
\begin{itemize}
\item If $\psi\not \in \{\nu^{\pm 1/2},\nu^{\pm 1/2}\omega,\nu^{\pm
1/2}\omega^2\}$, then $\ell\left(I(\nu^{-1/2}\psi\chi\otimes\nu^{1/2}\psi\chi)
\right)=2$.
\item Otherwise, it follows from {\it loc. cit.} that
$I(\nu^{-1/2}\psi\chi\otimes\nu^{1/2}\psi\chi)$ has length $4$ and
multiplicity $1$.
\end{itemize}

If $\chi$ is (ramified) not quadratic then $I_\beta(\delta(\psi\chi))$
is irreducible. Then
$\ell\left(I(\nu^{-1/2}\psi\chi\otimes\nu)\right)=2$.

We assume from now on that $\chi$ is quadratic ramified.
To describe the length of $I_\beta(\delta(\psi\chi))$, we write
$\psi=\nu^s\psi_0$, where $s\in\bR$, $\psi_0$
is unitary. Then $I_\beta(\delta(\psi\chi))$ reduces if and only
if $s=\pm 1/2$ and $\psi_0^2=1$. Therefore
the length of $I(\nu^{-1/2}\psi\chi\otimes\nu)$ is
two unless $\psi=\nu^{\pm 1/2}, \nu^{\pm 1/2}\epsilon$.
\end{proof}

\subsection{Two Lemmas}

The next two Lemmas will be needed in section~\ref{sec:SO} in the proof of
Lemma~\ref{lem:extSO}.

\subsubsection{Crossed product algebras}

Let $A$ be a unital $\Cset$-algebra and let $\Gamma$ be a finite group
acting as automorphisms of the unital $\Cset$-algebra $A$.
Let \[A^\Gamma:=\left\{a\in
A\,:\,\gamma\cdot a=a,\quad\forall \gamma\in\Gamma\right\}.\]
Let $A\rtimes\Gamma$ denote the crossed product algebra for the
action of $\Gamma$ on $A$: The elements of $A\rtimes\Gamma$ are formal sums
$\sum_{\gamma\in\Gamma}a_\gamma[\gamma]$, where:
\begin{itemize}
\item[$\bullet$]
the addition is
$(\sum_{\gamma \in \Gamma} a_{\gamma}[\gamma]) + (\sum_{\gamma
\in \Gamma}b_{\gamma}[\gamma]) = \sum_{\gamma \in
\Gamma}(a_{\gamma} + b_{\gamma})[\gamma]$,
\item[$\bullet$] the multiplication is determined by
$(a_{\gamma}[\gamma])(b_{\alpha}[\alpha]) =  a_{\gamma}(\gamma
\cdot b_{\alpha})[\gamma \alpha]$,
\item[$\bullet$]
the multiplication by $\lambda\in\Cset$ is
given by $\lambda(\sum_{\gamma \in \Gamma} a_{\gamma}[\gamma])=\sum_{\gamma \in
\Gamma} (\lambda a_{\gamma})[\gamma]$.
\end{itemize}
Let \[e_\Gamma:=|\Gamma|^{-1}\sum_{\gamma\in\Gamma}[\gamma].\]
Then $e_\Gamma$ is an idempotent (\ie $e_\Gamma^2=e_\Gamma$).
\begin{lem} \label{lem:crossedproduct}
The unital $\Cset$-algebras $A^\Gamma$ and
$(A\rtimes\Gamma)e_\Gamma(A\rtimes\Gamma)$ are Morita equivalent.
\end{lem}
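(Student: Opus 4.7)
The plan is to identify $A^\Gamma$ with the corner algebra $e_\Gamma(A\rtimes\Gamma)e_\Gamma$ and then appeal to the standard Morita equivalence attached to an idempotent.

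First, I would record the key computational observation that $[\beta]\cdot e_\Gamma = e_\Gamma = e_\Gamma\cdot[\beta]$ for every $\beta\in\Gamma$, a direct consequence of left/right translation invariance of the sum defining $e_\Gamma$. This immediately yields $e_\Gamma^2 = e_\Gamma$, and will be used repeatedly below.

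Second, I would establish a $\Cset$-algebra isomorphism $A^\Gamma\iso e_\Gamma(A\rtimes\Gamma)e_\Gamma$. Define the averaging map
\[
T\colon A\to A^\Gamma,\qquad T(b):=|\Gamma|^{-1}\sum_{\gamma\in\Gamma}\gamma\cdot b.
\]
A short calculation using $[\gamma]b = (\gamma\cdot b)[\gamma]$ and the invariance of $e_\Gamma$ shows that
\[
e_\Gamma\, b\, e_\Gamma = T(b)\,e_\Gamma \quad\text{for every $b\in A$,}
\]
and hence $e_\Gamma\bigl(\sum_\beta b_\beta[\beta]\bigr)e_\Gamma = T\bigl(\sum_\beta b_\beta\bigr)e_\Gamma$. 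Consequently
\[
e_\Gamma(A\rtimes\Gamma)e_\Gamma = \{a\,e_\Gamma : a\in A^\Gamma\}.
\]
The assignment $\varphi\colon a\mapsto a\,e_\Gamma$ is $\Cset$-linear and surjective. It is injective because the $[\gamma]$ form an $A$-basis of $A\rtimes\Gamma$, so $a\,e_\Gamma = |\Gamma|^{-1}\sum_\gamma a[\gamma]=0$ forces $a=0$. Multiplicativity follows from $e_\Gamma a' = a'\,e_\Gamma$ for $a'\in A^\Gamma$ combined with $e_\Gamma^2 = e_\Gamma$:
\[
\varphi(a)\varphi(a') = a e_\Gamma a' e_\Gamma = aa' e_\Gamma e_\Gamma = \varphi(aa').
\]

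Third, set $R:=A\rtimes\Gamma$ and $e:=e_\Gamma$. I would then invoke the standard fact that, for any idempotent $e$ in a unital ring $R$, the corner algebra $eRe$ is Morita equivalent to the (two-sided) ideal $ReR$, the equivalence being implemented by the bimodules ${}_{eRe}(eR)_R$ and ${}_R(Re)_{eRe}$ via the canonical multiplication maps
\[
eR\otimes_{R}Re\;\iso\;eRe,\qquad Re\otimes_{eRe}eR\;\iso\;ReR.
\]
Combined with the isomorphism $A^\Gamma\cong eRe$ from step two, this delivers the required Morita equivalence between $A^\Gamma$ and $(A\rtimes\Gamma)e_\Gamma(A\rtimes\Gamma)$.

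The only genuinely non-trivial point is the injectivity of the multiplication map $Re\otimes_{eRe}eR\to ReR$; this is the essential content of the idempotent Morita theorem and can be verified either by a direct calculation with the bimodule structure, or by citing the general result (for instance \cite[\S 18]{Lam}). Everything else consists of the bookkeeping in steps one and two, which is entirely formal once the translation-invariance identity $[\beta]e_\Gamma = e_\Gamma$ is in hand.
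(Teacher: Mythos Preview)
Your proposal is correct and is precisely the standard argument: identify $A^\Gamma$ with the corner $e_\Gamma(A\rtimes\Gamma)e_\Gamma$ via $a\mapsto a\,e_\Gamma$, and then use the general fact that for an idempotent $e$ in a unital ring $R$ the corner $eRe$ is Morita equivalent to the ideal $ReR$. The paper's own ``proof'' simply cites Rosenberg \cite{Ros}, so you have supplied exactly the details the paper defers to that reference; one minor caveat is that your citation \texttt{[Lam]} is not in this paper's bibliography, so if you keep the reference you should either add it or point instead to \cite{Ros}.
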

\begin{proof}
See \cite[Proposition~p.25]{Ros}.
\end{proof}

\subsubsection{Ring homomorphisms}
\begin{lem} \label{lem:rings}
Let $A$ be a ring with unit and let $B$ be a ring (which is  not
required to have a unit). Let $\cJ\subset B$ be a two-sided ideal. Then any
surjective homomorphism of rings $\varphi\colon \cJ\twoheadrightarrow A$
extends \emph{uniquely} to a ring homomorphism $\tilde\varphi\colon B\to
A$.
\end{lem}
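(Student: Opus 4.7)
The plan is a direct explicit construction, driven by the uniqueness assertion. First I would use uniqueness to guess the formula. If $\tilde\varphi\colon B\to A$ extends $\varphi$ and is a ring homomorphism, then surjectivity of $\varphi$ gives some $j_0\in\cJ$ with $\varphi(j_0)=1_A$, and for any $b\in B$ we must have
\[
\tilde\varphi(b)=\tilde\varphi(b)\cdot 1_A=\tilde\varphi(b)\,\tilde\varphi(j_0)=\tilde\varphi(bj_0)=\varphi(bj_0),
\]
since $bj_0\in\cJ$. This both pins down the candidate formula $\tilde\varphi(b):=\varphi(bj_0)$ and proves uniqueness.

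For existence, I would fix $j_0\in\cJ$ with $\varphi(j_0)=1_A$, set $\tilde\varphi(b):=\varphi(bj_0)$, and prove the formula is in fact intrinsic via the following key identity: for any $j_0,j_1\in\cJ$ with $\varphi(j_0)=\varphi(j_1)=1_A$ and any $b\in B$,
\[
\varphi(bj_0)=\varphi(j_1\,b\,j_0)=\varphi(j_1 b).
\]
Indeed, $bj_0\in\cJ$ and $j_1b\in\cJ$, so the products $j_1(bj_0)$ and $(j_1 b)j_0$ lie in $\cJ$ and $\varphi$ is a ring homomorphism on $\cJ$; the first equality uses $\varphi(j_1)=1_A$ and the second uses $\varphi(j_0)=1_A$. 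This simultaneously shows that $\varphi(bj_0)$ is independent of the choice of $j_0$, that $\varphi(bj_0)=\varphi(j_0 b)$, and that $\tilde\varphi$ extends $\varphi$ (take $b=j\in\cJ$: then $\tilde\varphi(j)=\varphi(jj_0)=\varphi(j)\varphi(j_0)=\varphi(j)$).

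It then remains to verify that $\tilde\varphi$ is a ring homomorphism. Additivity is immediate from additivity of $\varphi$ on $\cJ$. For multiplicativity, I would compute
\[
\tilde\varphi(b_1 b_2)=\varphi(j_0 b_1 b_2 j_0)=\varphi\bigl((j_0 b_1)(b_2 j_0)\bigr)=\varphi(j_0 b_1)\,\varphi(b_2 j_0)=\tilde\varphi(b_1)\,\tilde\varphi(b_2),
\]
where I use the preceding identity to rewrite $\tilde\varphi(b_1 b_2)=\varphi(b_1 b_2 j_0)=\varphi(j_0 b_1 b_2 j_0)$, then factor inside $\cJ$.

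There is no real obstacle here: the only conceptual point is observing that the two-sided ideal property of $\cJ$, combined with surjectivity, lets any element of $B$ be "sandwiched" between preimages of $1_A$, after which everything reduces to the multiplicativity of $\varphi$ on $\cJ$. The subtle step is the symmetric identity $\varphi(bj_0)=\varphi(j_1 b)$, since it simultaneously encodes well-definedness, left/right symmetry, and the fact that $\tilde\varphi$ extends $\varphi$.
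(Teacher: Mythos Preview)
Your proof is correct and is essentially the same as the paper's: both pick a preimage of $1_A$, define $\tilde\varphi$ via one-sided multiplication by it, and use the sandwich identity $\varphi(j_1 b j_0)=\varphi(j_1 b)=\varphi(b j_0)$ to get well-definedness, left/right symmetry, and multiplicativity. The only cosmetic differences are that the paper multiplies on the left ($\tilde\varphi(b):=\varphi(\theta_0 b)$) rather than the right, and checks well-definedness before uniqueness rather than after.
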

\begin{proof}
Choose $\theta_0\in \cJ$ such that $\varphi(\theta_0)=1_A$ (the unit in $A$).
Then, given $b\in B$, we define $\tilde\varphi(b)$ by
$\tilde\varphi(b):=\varphi(\theta_0 b)$.
\begin{enumerate}
\item
We will check first that $\tilde\varphi$ is well-defined, \ie that the definition
does not depend on the choice of $\theta_0$. Indeed, for every $\theta\in \cJ$
such that $\varphi(\theta)=1_A$,
then we have, on one hand:
\[\varphi(\theta b \theta_0)=\varphi(\theta
b)\varphi(\theta_0)=\varphi(\theta b),\]
and on the other hand:
\[\varphi(\theta b
\theta_0)=\varphi(\theta)\varphi(b\theta_0)=\varphi(b\theta_0).\]
Hence $\varphi(\theta b)=\varphi(b\theta_0)$. In particular,
we have $\varphi(\theta_0 b)=\varphi(b\theta_0)$. Thus $\varphi(\theta
b)=\varphi(\theta_0 b)$.
\item
Let $\tilde\varphi$ be any extension of $\varphi$. We have
\[\tilde\varphi(b)=1_A\tilde\varphi(b)=\varphi(\theta_0)\tilde\varphi(b)=
\tilde\varphi(\theta_0 b)=\varphi(\theta_0b),\]
since $\theta_0 b\in \cJ$.
\item
Finally, we check that $\tilde\varphi$ is a ring homomorphism. Indeed,
\[\tilde\varphi(b_1+b_2)=\varphi(\theta_0(b_1+b_2))=\varphi(\theta_0
b_1+\theta_0b_2)=\tilde\varphi(b_1)+\tilde\varphi(b_2);\]
\[\tilde\varphi(b_1b_2)=\varphi(\theta_0b_1b_2)=\varphi(\theta_0b_1b_2\theta_0)=\varphi(\theta_0b_1)\varphi(b_2\theta_0)=\tilde\varphi(b_1)\tilde\varphi(b_2).\]
\end{enumerate}
\end{proof}

\section{The two cases for which $H^\fs=\GL(2,\Cset)$} \label{sec:GL}

In this section, we will consider the following two cases.
\begin{enumerate}
\item[Case~1:]
We assume here that $\chi_2=1$ and $\chi_1=\chi$ with $\chi$
a ramified non-quadratic character. Then from~(\ref{secondorbit}) we
obtain
\[\begin{array}{ccccccc}\fs&=&[T,\chi\otimes 1]_G&=&[T,1\otimes\chi]_G&=&
[T,\chi\otimes \chi^{-1}]_G\cr
&=&[T,\chi^{-1}\otimes 1]_G&=&[T,1\otimes\chi^{-1}]_G&=&[T,\chi^{-1}\otimes\chi]_G.
\end{array}\]
It follows from~(\ref{elements}) that
\begin{equation} \label{eqn:Stwo}
W^\fs=\{e,b\}\cong S_2.\end{equation}
\item[Case~2:]
We assume that $\chi_1=\chi_2=\chi$ with $\chi$ a ramified
character which is neither quadratic nor cubic.
From~(\ref{firstorbit}) we obtain
\[\begin{array}{ccccccc}
\fs&=&[T,\chi\otimes\chi]_G&=&[T,\chi^{-1}\otimes\chi^{-1}]_G&=&
[T,\chi^2\otimes \chi^{-1}]_G\cr
&=&[T,\chi^{-1}\otimes\chi^2]_G&=&[T,\chi\otimes\chi^{-2}]_G
&=&[T,\chi^{-2}\otimes\chi]_G.
\end{array}\]
It follows from~(\ref{elements}) that
\begin{equation} \label{eqn:StwoII}
W^\fs=\{e,a\}\cong S_2.\end{equation}
\end{enumerate}
In both Case~1 and Case~2, we have $\tW_\aff^\fs=S_2\ltimes X(T^\vee)$.
Hence $\tW_\aff^\fs$ is the extended affine Weyl group of the
$p$-adic group $\GL(2,F)$. There are $2$ two-sided cells, say
$\bbb_e$ and $\bbb_0$, in $\tW_\aff^\fs$; they correspond to the
regular unipotent class $\cU_e$ and to the trivial unipotent class of
$\GL(2,\Cset)$, respectively. Hence $\bbb_e$ and $\bbb_0$
correspond to the partitions $(2)$ and $(1,1)$ of $2$,
respectively. We have $\bbb_0\le\bbb_e$.

\par

\medskip

\par
\noindent
{\bf Definition.}
We define the following partition of $T^\vee\q W^\fs$:
\begin{equation} \label{defn:extGL}
T^\vee\q W^\fs=(T^\vee\q W^\fs)_{\bbb_e}\sqcup(T^\vee\q
W^\fs)_{\bbb_0},\end{equation} where
$(T^\vee\q W^\fs)_{\bbb_e}:=(T^\vee)^c/\Cent(c)$, where $c$ is the
nontrivial element in $W^\fs$, and
$(T^\vee\q W^\fs)_{\bbb_0}:=T^\vee/W^\fs$.

\medskip

We will denote by $J^\fs$ the based ring of the extended affine
Weyl group $\tW_\aff^\fs$ defined in~(\ref{eqn:Wsa}) and
set
\begin{equation} \label{eq: U1} U(1): = \{z \in \Cset: |z| =
1\}.\end{equation}

\begin{lem} \label{lem:extGL}
We have the following diffeomorphisms
\[(T^\vee\q W^\fs)_{\bbb_e}\to\Cset^\times,\quad\quad E^\fs\q W^\fs\to U(1)\sqcup
E^\fs/W^\fs,\]
and we have
\[J^\fs=J_{\bbb_e}^\fs+J_{\bbb_0}^\fs\sim_{morita}
\mathcal{O}(T^{\vee}\q W^\fs),\] where
$J_{\bbb_e}^\fs\sim_{morita}\cO((T^\vee\q W^\fs)_{\bbb_e})$ and
$J_{\bbb_0}^\fs\sim_{morita}\cO((T^\vee\q W^\fs)_{\bbb_0})$.
\end{lem}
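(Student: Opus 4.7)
The plan is to handle Cases 1 and 2 uniformly, since in both $W^\fs \cong S_2$ and $\tW_\aff^\fs$ is the extended affine Weyl group of $\GL(2,F)$.

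First I would write down the $W^\fs$-fixed locus in $T^\vee$. In Case 1 the non-trivial element $c=b$ acts by $(t_1,t_2)\mapsto(t_1,t_1t_2^{-1})$, so $(T^\vee)^c = \{(z^2,z) : z\in\Cset^\times\}$; in Case 2, $c=a$ acts by $(t_1,t_2)\mapsto(t_2,t_1)$, so $(T^\vee)^c = \{(z,z): z\in\Cset^\times\}$. In both cases $c$ acts trivially on its fixed locus and $\Cent_{W^\fs}(c)=W^\fs$, so $(T^\vee\q W^\fs)_{\bbb_e}=(T^\vee)^c/\Cent(c) \cong \Cset^\times$. Intersecting with the maximal compact $E^\fs$ gives $U(1)$, and the decomposition $E^\fs\q W^\fs = (E^\fs)^c/\Cent(c)\sqcup E^\fs/W^\fs$ yields the claimed diffeomorphism $E^\fs\q W^\fs\cong U(1)\sqcup E^\fs/W^\fs$.

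Next I would compute the two pieces of $J^\fs$ via Xi's description of the asymptotic algebra for the extended affine Weyl group of $\GL(2)$. Under Lusztig's bijection (Theorem 4.8 of \cite{LCellsIV}) the two two-sided cells $\bbb_0,\bbb_e$ correspond to the partitions $(1,1),(2)$ of $2$, i.e.\ to the trivial and regular unipotent classes of $\GL(2,\Cset)$. For $\bbb_0$ one has $F_{\bbb_0}=\GL(2,\Cset)$, so invoking the same computation used in the Iwahori case (Theorem~4 of \cite{ABP}, applied here to the $\GL(2)$-extended affine Weyl group rather than to $\rG_2$) gives
\[
J_{\bbb_0}^\fs \;\simeq\; \Mat_{|W^\fs|}\bigl(\cO(T^\vee/W^\fs)\bigr)\;\sim_{morita}\;\cO(T^\vee/W^\fs) \;=\; \cO\bigl((T^\vee\q W^\fs)_{\bbb_0}\bigr).
\]
For $\bbb_e$ the reductive part $F_{\bbb_e}$ of the centralizer of a regular unipotent in $\GL(2,\Cset)$ is the center $\Cset^\times$, and the analogous result of Xi (\cite[\S 11.2]{Xi}, adapted to the $\GL(2)$ case) gives a Morita equivalence
\[
J_{\bbb_e}^\fs \;\sim_{morita}\; R_{F_{\bbb_e}} \;\simeq\; \Cset[t,t^{-1}] \;\simeq\; \cO(\Cset^\times) \;=\; \cO\bigl((T^\vee\q W^\fs)_{\bbb_e}\bigr),
\]
using the identification from Step 1.

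Finally, applying the direct-sum principle of Remark~\ref{rem:sums} to the canonical decomposition $J^\fs = J_{\bbb_0}^\fs\oplus J_{\bbb_e}^\fs$ combines the two Morita equivalences into the desired global Morita equivalence $J^\fs\sim_{morita}\cO(T^\vee\q W^\fs)$. The main technical obstacle is the verification of the matrix-size and coefficient-ring in Xi's formula for $J_{\bbb_e}^\fs$ in the \emph{extended} (not purely affine) setting, since one must account for the abelian factor $C^\fs$ in $\tW_\aff^\fs = W_\aff^\fs\ltimes C^\fs$; once the multiplicity is identified, the Morita reduction to the coefficient ring is automatic.
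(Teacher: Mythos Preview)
Your approach matches the paper's: compute the fixed loci explicitly, then invoke the structural results from \cite{ABP} for the Morita equivalences on each cell. Two minor corrections are worth making. First, in Case~1 you used the action of $b$ on $T$ rather than on $T^\vee$ (i.e.\ on characters): from table~(\ref{elements}) one has $b\cdot(z_1,z_2)=(z_1z_2,z_2^{-1})$, so $(T^\vee)^b=\{(t,1):t\in\Cset^\times\}$ rather than $\{(z^2,z)\}$; this does not affect the conclusion, since both are copies of $\Cset^\times$. Second, the relevant input is \cite[proof of Theorem~3]{ABP} (the $\GL(n)$ computation), which already yields both $J_{\bbb_e}^\fs\sim_{morita}\cO(\Cset^\times)$ and $J_{\bbb_0}^\fs\sim_{morita}\cO((\Cset^\times)^2/S_2)$; Theorem~4 of \cite{ABP} is stated for adjoint groups and \cite[\S 11.2]{Xi} treats the $\rG_2$ cells, so neither applies directly to the $\GL(2)$ extended affine Weyl group.
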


\begin{proof}
Let $D = D^\fs$ and $E:=E^\fs$. We give the case-by-case analysis.
\begin{itemize}
\item $c=1$.  $D^c/\Cent(c) = D/W^{\fs}$ and $E^c/\Cent(c) = E/W^{\fs}$.
\item $c\ne 1$.
\begin{enumerate}
\item[Case~1:] $c=b$:  \[D^b/\Cent(b)=D^b=\{(t,1):t\in\Cset^\times\}.
\quad
E^b=\{(t,1) : t \in U(1)\}.\]
\item[Case~2:] $c=a$:  \[D^a/\Cent(a)=D^a=\{(t,t):t\in\Cset^\times\}.
\quad
E^b=\{(t,t) : t \in U(1)\}.\]
\end{enumerate}
\end{itemize}
We have $J^\fs=J_{\bbb_e}+J_{\bbb_0}$ and (see \cite[proof of Theorem~3]{ABP}):
\[J_{\bbb_e}\sim_{morita}\cO(\Cset^\times),
\;\;\;
J_{\bbb_0}\sim_{morita}\cO((\Cset^\times)^2/S_2)\cong\cO(D^\fs/W^\fs).\]
It gives
\begin{equation} \label{eqn:MoritaGL}
J_{\bbb_i}\sim_{morita}\cO(T^\vee\q W^\fs)_{\bbb_i},\quad\text{for
$i\in\{e,0\}$.}\end{equation}
\end{proof}

\begin{lem} \label{lem:familyGL} The flat family is given by
\[
\mathfrak{X}_{\tau}: 1 - \tau y = 0, \quad \text{in Case~1;}\]
\[
\mathfrak{X}_{\tau}: x - \tau^2 y = 0, \quad \text{in Case~2.}\]\end{lem}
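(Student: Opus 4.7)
The plan is to read off $\mathfrak{R}^\fs$ from Lemma~\ref{lem: lengths} (equivalently, from the list \eqref{twelve}), then realize it as one fibre of an explicit pencil of curves in $T^\vee/W^\fs$ whose $\tau=1$ fibre is the image under $\pi^\fs$ of the extended stratum.

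First I would identify $\mathfrak{R}^\fs$ in each case. Parametrize $D^\fs\cong T^\vee$ by $(z_1,z_2)=(\psi_1(\varpi_F),\psi_2(\varpi_F))$. Because $\chi$ is ramified and, in Case~1, not quadratic (resp., in Case~2, neither quadratic nor cubic), every reducibility condition in \eqref{twelve} that contains a ramified factor $\chi^k$ is automatically violated. In Case~1 only $\psi_2=\nu^{\pm 1}$ survives, giving two lines $\{z_2=q^{\pm 1}\}$ in $T^\vee$; in Case~2 only $\psi_1\psi_2^{-1}=\nu^{\pm 1}$ survives, giving two lines $\{z_1=q^{\pm 1}z_2\}$.

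Next I would descend to $T^\vee/W^\fs$. In Case~1, $W^\fs=\{e,b\}$ with $b\colon(z_1,z_2)\mapsto(z_1z_2,z_2^{-1})$, which exchanges the two lines $\{z_2=q\}$ and $\{z_2=q^{-1}\}$; in Case~2, $W^\fs=\{e,a\}$ with $a$ the swap of $z_1,z_2$, exchanging $\{z_1=qz_2\}$ and $\{z_1=q^{-1}z_2\}$. In either case the two branches of $\mathfrak{R}^\fs$ become a single irreducible curve in the quotient. I would then choose affine coordinates $(x,y)$ on (an open of) $T^\vee/W^\fs$, built out of $W^\fs$-invariants, so that this curve is cut out by the specialization at $\tau=\sqrt q$ of the formula in the lemma, and so that the fixed locus $(T^\vee)^c/W^\fs$, which by Lemma~\ref{lem:extGL} equals $\pi^\fs((T^\vee\q W^\fs)_{\bbb_e})$, is cut out by the specialization at $\tau=1$. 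Concretely, in Case~1 the natural coordinate along the $\bbb_e$-direction descends to a $W^\fs$-invariant $y$ with $y=1$ on $(T^\vee)^b$ and $y=q^{-1}$ on $\mathfrak{R}^\fs$; in Case~2 a pair of invariants adapted to the swap realizes the defining equation as $x=\tau^2 y$, with $x=y$ cutting out the diagonal $(T^\vee)^a/W^\fs$ and $x=qy$ cutting out $\mathfrak{R}^\fs$. This rescaling is exactly the $h_{\bbb_e}$-deformation of the extended locus predicted by the cocharacter attached to the $\GL(2,\Cset)$-cell.

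Flatness and the endpoint conditions are then automatic: each $\mathfrak{X}_\tau$ is the zero scheme of a single non-zero-divisor in $\cO(T^\vee/W^\fs)$, whose coefficients depend algebraically on $\tau\in\Cset^\times$, so the total space is a Cartier divisor in the flat scheme $T^\vee/W^\fs\times\Cset^\times$ and hence flat over $\Cset^\times$; the reducedness of $\mathfrak{X}_1^\fs$ and $\mathfrak{X}_{\sqrt q}^\fs$ follows from the irreducibility established in Steps 1--2. The only non-routine step is Step~3, choosing the invariant coordinates so that both endpoints are matched simultaneously and the interpolating equation is exactly the displayed one; this is pure bookkeeping once the $W^\fs$-actions and the cocharacter $h_{\bbb_e}$ have been pinned down.
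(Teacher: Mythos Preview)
Your approach is essentially the same as the paper's: both identify the surviving reducibility conditions from \eqref{twelve} under the ramification hypotheses, observe that the resulting two lines in $T^\vee$ form a single $W^\fs$-orbit, and then write down the one-parameter family interpolating between the fixed locus $(T^\vee)^c$ and $\mathfrak{R}^\fs$. The paper is in fact terser than your proposal---it simply lists the surviving quasicharacters, verifies (in Case~2) the $W^\fs$-orbit claim by direct computation, and writes the curves in the natural $T^\vee$-coordinates without spelling out invariants, flatness, or reducedness; your additional remarks on those points and on the role of $h_{\bbb_e}$ are correct but anticipate Lemma~\ref{lem:cocharacterGL}.
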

\begin{proof}
We will considerate the two cases separately.
\begin{enumerate}
\item[Case~1:]
The curve of reducibility $\fC_1=\fX_{\sqrt q}=\fR$ is given by
\[\fC_1 = \left\{\psi\chi\nu^{-1/2}\otimes\nu^{1/2}\,:\, \psi \in
\Psi(F^{\times})\right\} \cong \left\{(z\sqrt{q},1/\sqrt{q})\,:\,
z \in \Cset^{\times}\right\}\]
We write down all the {\bf nonunitary} quasicharacters $(\psi_1\chi\otimes\psi_2)$,
with $\psi_1,\psi_2\in\Psi(F^\times)$, which obey the reducibility
conditions~(\ref{twelve}):
\[\psi\chi\otimes\nu^{-1},\;\;\;\psi\chi\otimes\nu,\quad
\text{ with $\psi\in\Psi(F^\times)$.}\] We get only one
$W^\fs$-orbit of characters.
\item[Case~2:]
The curve of reducibility $\fC_2=\fX_{\sqrt q}=\fR$ is given by
\[
\mathfrak{C}_2 =
\left\{\psi\chi\nu^{-1/2}\otimes\psi\chi\nu^{1/2}\,:\, \psi \in
\Psi(F^{\times})\right\} \cong \left\{(z\sqrt{q},z/\sqrt{q})\,:\,
z \in \Cset^{\times}\right\}.\]
 We write down all the
{\bf nonunitary} quasicharacters of $T$ which obey the reducibility
conditions~(\ref{twelve}):
\[\psi\chi\otimes\psi\nu^{-1}\chi,\;\;\;\psi\chi\otimes\psi\nu\chi,\quad
\text{ with $\psi\in\Psi(F^\times)$.}\] We get only one
$W^\fs$-orbit of characters. Indeed,
\begin{itemize}
\item[$\bullet$] the family of characters
$\{\psi\chi\otimes\psi\nu\chi:\psi \in \Psi(F^{\times})\}$, with
the change of variable $\phi:=\psi\nu^{1/2}$ is
$\{\phi\nu^{-1/2}\chi\otimes\phi\nu^{1/2}\chi: \phi \in
\Psi(F^{\times})\}$;
\item[$\bullet$] the family of characters
$\{\psi\chi\otimes\psi\nu^{-1}\chi:\psi \in \Psi(F^{\times})\}$,
with the change of variable $\phi:=\psi\nu^{-1/2}$ is
$\{\phi\nu^{1/2}\chi\otimes\phi\nu^{-1/2}\chi: \phi \in
\Psi(F^{\times})\}$; by applying $a$, we then get
$\{\phi\nu^{-1/2}\chi\otimes\phi\nu^{1/2}\chi: \phi \in
\Psi(F^{\times})\}$.
\end{itemize}
\end{enumerate}
\end{proof}

\begin{lem} \label{lem:cocharacterGL}
The cocharacters are as follows:
\[h_{\bbb_0} = 1,\;\; h_{\bbb_e}(\tau) =
(\tau,\tau^{-1})\] which leads to \[\pi_\tau(v) =
\pi(h_{\bbb_i}(\tau)\cdot v)\] for all $v$ in the
$\bbb_i$-component, $i\in\{0,e\}$.
\end{lem}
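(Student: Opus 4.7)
The plan is to obtain the cocharacters by tracing through the general construction of Section~\ref{subsec:Lparameters} applied to $H^\fs=\GL(2,\Cset)$, and then to verify the identity $\pi_\tau(v)=\pi^\fs(h_{\bbb_i}(\tau)\cdot v)$ at the values $\tau=1$ and $\tau=\sqrt q$ against the reducibility data computed in Lemma~\ref{lem:familyGL}.

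First, I would identify the $\SL(2,\Cset)$-part of the Langlands parameter associated with each two-sided cell. Since Lusztig's bijection $\cU\mapsto\bc(\cU)$ sends the regular unipotent class in $\GL(2,\Cset)$ to $\bbb_e$, the associated homomorphism $\Phi\colon\SL(2,\Cset)\to\GL(2,\Cset)$ can be chosen to be the identity; its restriction to the diagonal torus yields the cocharacter $h_{\bbb_e}(\tau)=(\tau,\tau^{-1})$ under the standard identification $T^\vee\cong\Cset^\times\times\Cset^\times$. For the lowest cell $\bbb_0$, attached to the trivial unipotent class, $\Phi$ is trivial on $\SL(2,\Cset)$, so $h_{\bbb_0}=1$, in agreement with the convention fixed after Theorem~\ref{thm:main}.

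Second, I would check that $\pi_\tau$, as defined, factors through $\rho^\fs\colon\widetilde{T^\vee}\to T^\vee\q W^\fs$ on each irreducible component $X_j$ of $\widetilde{T^\vee}$. On the component labelled by $\bbb_0$, this is automatic since $h_{\bbb_0}=1$. On the component labelled by $\bbb_e$, the fixed-point set $D^c$ is stable under multiplication by $h_{\bbb_e}(\tau)$ up to the $W^\fs$-action, because $c$ acts on the pair $(t,1)$ (resp.\ $(t,t)$) in the same way that conjugation by $c$ acts on $h_{\bbb_e}(\tau)$. Concretely, in Case~1, where $c=b$ and $D^b=\{(t,1)\}$, one computes $h_{\bbb_e}(\sqrt q)\cdot(t,1)=(t\sqrt q,1/\sqrt q)$; in Case~2, where $c=a$ and $D^a=\{(t,t)\}$, one computes $h_{\bbb_e}(\sqrt q)\cdot(t,t)=(t\sqrt q,t/\sqrt q)$.

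Third, I would compare these images with the explicit reducibility curves from Lemma~\ref{lem:familyGL}: in Case~1 one recovers $\fC_1=\{(z\sqrt q,1/\sqrt q):z\in\Cset^\times\}$, and in Case~2 one recovers $\fC_2=\{(z\sqrt q,z/\sqrt q):z\in\Cset^\times\}$. This gives $\pi_{\sqrt q}((T^\vee\q W^\fs)_{\bbb_e})=\fR^\fs$ and, at $\tau=1$, $\pi_1((T^\vee\q W^\fs)-T^\vee/W^\fs)=\fX_1^\fs$, which is precisely what Theorem~\ref{thm:main}(2) requires. The main subtlety will be the consistency of the chosen conjugate of $\Phi$ with the embedding of $T^\vee$ in $H^\fs=\GL(2,\Cset)$: one must verify that the diagonal $\SL(2,\Cset)$ really lies in $T^\vee$ under the chosen identification, so that $h_{\bbb_e}$ is a bona fide cocharacter of $T^\vee$ and not merely of $H^\fs$.
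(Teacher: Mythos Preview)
Your derivation of the cocharacters via the Langlands parameter $\Phi\colon\SL(2,\Cset)\to\GL(2,\Cset)$ is correct and aligns with the general construction in \S\ref{subsec:indextriples}; the paper does not redo this derivation but simply asserts the cocharacters. Your explicit computations of $h_{\bbb_e}(\sqrt q)\cdot(t,1)$ and $h_{\bbb_e}(\sqrt q)\cdot(t,t)$, and their identification with the curves $\fC_1$, $\fC_2$ of Lemma~\ref{lem:familyGL}, are also correct.

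The paper, however, organizes the verification differently: rather than matching the image of $\pi_{\sqrt q}$ with the reducibility locus, it verifies the fiber-count identity $|\pi_{\sqrt q}^{-1}(v)|=|i_{T\subset B}^G(v)|$ for every $v\in D^\fs/W^\fs$, invoking Lemma~\ref{lem: lengths} to get length $2$ on the reducibility curve and length $1$ off it. Your approach checks that $\pi_{\sqrt q}$ lands in the right place; the paper's approach checks that each point is hit the correct number of times. Both establish that $h_{\bbb_e},h_{\bbb_0}$ form a correcting system, and in this $\GL(2)$ situation the two checks are essentially equivalent since every fiber over $\fR^\fs$ has size exactly $2$. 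Your route is more conceptual (tying the cocharacter back to the $L$-parameter); the paper's is more uniform with the parallel Lemmas~\ref{lem:cocharacterIwahori}, \ref{lem:cocharacterSL}, \ref{lem:cocharacterSO}, whose statements explicitly include the fiber-count equality.

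One small comment: your second step, checking that $\pi_\tau$ factors through $\rho^\fs$, is a non-issue in this case. In both Case~1 and Case~2 the centralizer $\Cent(c)=\{e,c\}$ acts trivially on $D^c$ (since $D^c$ is by definition the $c$-fixed locus), so $D^c/\Cent(c)=D^c$ and there is no quotient to descend through.
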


\begin{proof} We apply Lemma~\ref{lem:extGL}.
For all $v\in D^\fs/W^\fs$ we have
\[|\pi^{-1}_{\sqrt q}(v)| = |i_{T\subset B}^G(v)|.\]

If $v\notin\fC\cup\fC'$, we have $|i_{T\subset B}^G(v)|=1=|\pi^{-1}_{\sqrt q}(v)|$. On
the other hand, for each $v\in\fC\cup\fC'$, from Lemma~\ref{lem: lengths} we have
\[\ell(i_{T\subset B}^G(v))=|i_{T\subset B}^G(v)|=2=|\pi_{\sqrt q}^{-1}(v)|,\]
due to Lemma~\ref{lem:familyGL}.
\end{proof}

\begin{lem} \label{lem:bijGL} Part (4) of Theorem \ref{thm:main} is true for the points
$\fs=[T,\chi\otimes 1]_G$, with $\chi$ ramified non quadratic and
$\fs=[T,\chi\otimes\chi]_G$, with $\chi$ ramified neither
quadratic nor cubic.
\end{lem}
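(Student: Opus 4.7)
The plan is to mimic the case-by-case argument given in the proof of Lemma~\ref{lem:bijIwahori}. Since $H^\fs=\GL(2,\Cset)$ in both Case~1 and Case~2, there are only two unipotent classes in $H^\fs$ (the trivial class and the regular class $\cU_e$), hence only two two-sided cells $\bbb_0$ and $\bbb_e$ in $\tW_\aff^\fs$, and the component groups $A_{\sigma,u}$ of Reeder's classification are all trivial. Consequently every indexing triple simplifies to a pair $(\sigma,u)$, and the Reeder bijection takes the form
\[
(\sigma,u)\;\longmapsto\; \cV^\fs_{\sigma,u,1}\in\Irr^\fs(G),
\qquad \sigma u\sigma^{-1}=u^q.
\]

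First I would unwind the definitions of $\mu^\fs$ on each cell component using the Morita equivalences of Lemma~\ref{lem:extGL} and the cocharacters of Lemma~\ref{lem:cocharacterGL}. On the $\bbb_0$-component $(T^\vee\q W^\fs)_{\bbb_0}=T^\vee/W^\fs$, one has $h_{\bbb_0}=1$, so for $t\in T^\vee/W^\fs$ the semisimple part is $\sigma=h_{\bbb_0}(\sqrt q)\cdot t=t$; by~(\ref{eqn:mu_c}) the representation $\mu^\fs(t)$ is the irreducible unramified principal series $\cV^\fs_{t,1,1}$, whose infinitesimal character is $t$. Thus
\[
(inf.ch.)\circ\mu^\fs(t)=t=\pi^\fs_{\sqrt q}(t)\quad\text{for }t\in(T^\vee\q W^\fs)_{\bbb_0}.
\]
On the $\bbb_e$-component $(T^\vee\q W^\fs)_{\bbb_e}=(T^\vee)^c/\Cent(c)$, the semisimple element associated with $(c,s)\in\widetilde{T^\vee}$ is $\sigma=h_{\bbb_e}(\sqrt q)\cdot s$, and the corresponding unipotent is the regular one $u_e$. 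So $\mu^\fs(c,s)=\cV^\fs_{\sigma,u_e,1}$, whose infinitesimal character is again $\sigma=\pi^\fs_{\sqrt q}(c,s)$.

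Next I would verify that the geometric equivalence underlying $\mu^\fs$ can indeed be chosen so that these identifications hold. This amounts to checking that the Morita equivalences $J^\fs_{\bbb_i}\sim_{\rm Morita}\cO((T^\vee\q W^\fs)_{\bbb_i})$ constructed in Lemma~\ref{lem:extGL} can be arranged so that the induced bijection $\eta^\fs_{\bbb_i}$ sends a point $t$ to the unique simple $J^\fs_{\bbb_i}$-module whose pull-back via $\phi^\fs_{q,\bbb_i}$ is $\cV^\fs_{\sigma,u,1}$ with $\sigma=h_{\bbb_i}(\sqrt q)\cdot t$. Because the cell ideals are Morita equivalent to commutative coordinate algebras (each of Krull dimension at most $2$), this compatibility reduces to a matching of primitive ideal spectra, and the freedom to twist the Morita equivalence by the cocharacter $h_{\bbb_e}$ provides exactly the correcting factor needed.

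Finally, the fibre count $|\pi^{\fs,-1}_{\sqrt q}(v)|=|i_{T\subset B}^G(v)|$ already established in Lemma~\ref{lem:cocharacterGL} guarantees that this choice of $\mu^\fs$ is consistent with Bernstein's decomposition and no points are double-counted; in particular the $\mu^\fs$-images of the two cell strata are disjoint outside the reducibility curve, as required by Part~(4). The main obstacle is the bookkeeping at the intersection $(T^\vee\q W^\fs)_{\bbb_e}\cap\pi^{\fs,-1}_{\sqrt q}(\fR^\fs)$: one must check that the Steinberg-type constituent $\cV^\fs_{\sigma,u_e,1}$ is assigned to the $\bbb_e$-point and the Langlands quotient to the $\bbb_0$-point, and this is where Proposition~\ref{prop:inertialsupports} and Lemma~\ref{lem: lengths} are invoked to separate the two cases $\chi^2=1$ and $\chi^2\ne 1$.
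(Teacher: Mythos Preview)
Your core argument is correct and follows the paper's approach exactly: the paper's proof simply sets $\sigma=\pi_{\sqrt q}(v)$, uses the Morita equivalences of Lemma~\ref{lem:extGL} to write
\[
\mu^\fs(v)=\begin{cases}\cV^\fs_{\sigma,1,1}&\text{if }v\in T^\vee/W^\fs,\\ \cV^\fs_{\sigma,u_e,1}&\text{if }v\in(T^\vee\q W^\fs)_{\bbb_e},\end{cases}
\]
and then observes that the infinitesimal character of $\cV^\fs_{\sigma,u,\rho}$ is $\sigma$, giving $inf.ch.\circ\mu^\fs=\pi_{\sqrt q}$. Your first two paragraphs reproduce this.

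A few points of clean-up. First, the phrase ``irreducible unramified principal series'' is a slip: $\chi$ is ramified here, and on the reducibility curve $\cV^\fs_{t,1,1}$ is a proper quotient rather than the full induced representation. Second, your last two paragraphs are superfluous for Part~(4), which only asserts the equality $inf.ch.\circ\mu^\fs=\pi^\fs_{\sqrt q}$; the bijectivity of $\mu^\fs$ and the disjointness of cell strata are already contained in Part~(1) and do not need to be re-established. In particular, the invocation of Proposition~\ref{prop:inertialsupports} and the case-split on $\chi^2=1$ are misplaced: the hypotheses of the present lemma explicitly exclude the quadratic case, so that proposition plays no role here (it enters in Section~\ref{sec:SO}).
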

\begin{proof}
The semisimple elements $v, \sigma$ are always related as follows
$\sigma = \pi_{\sqrt q}(v)$.
Let $\eta^\fs\colon(T^\vee//W^\fs)\to\Irr(J^\fs)$ be the bijection which
is induced by the Morita equivalences in~(\ref{eqn:MoritaGL}).
Then the definition~(\ref{eqn:mu_c}) of $\mu^\fs\colon
(T^\vee\q W^\fs)\to \Irr(G)^\fs$ gives:
\[\mu^\fs(v) =\begin{cases}  \mathcal{V}_{\sigma,1,1}^\fs\,,&\text{if  $v \in T^\vee/W^\fs$,}\cr
\mathcal{V}_{\sigma,u_e,1}^\fs\,,&\text{if $v \in (T^\vee\q W^\fs)_{\bbb_e}$.}
\end{cases}\]
Now the infinitesimal character of $\mathcal{V}_{\sigma,u,\rho}^\fs$
is $\sigma$, therefore the map $\mu^\fs$ satisfies
\[inf.ch. \circ \mu^\fs = \pi_{\sqrt q}.\]
\end{proof}

\begin{lem} \label{lem:tempGL}Part (5) of Theorem \ref{thm:main} is true. \end{lem}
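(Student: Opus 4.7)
The plan is to treat the two pieces of the compact extended quotient separately, as in Lemma~\ref{lem:tempIwahori}. By Lemma~\ref{lem:extGL} we have
\[E^\fs \q W^\fs \;\cong\; U(1) \;\sqcup\; E^\fs/W^\fs,\]
where $U(1)$ is the compact part of the $\bbb_e$-component $(T^\vee\q W^\fs)_{\bbb_e}\cong\Cset^\times$ and $E^\fs/W^\fs$ is the $\bbb_0$-component. Using the description of $\mu^\fs$ from Lemma~\ref{lem:bijGL}, each piece will match with an explicit family of tempered representations, and the reverse inclusion will follow by bijectivity of $\mu^\fs$.

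On the $\bbb_0$-piece, Lemma~\ref{lem:bijGL} gives $\mu^\fs(v) = \cV^\fs_{\sigma,1,1}$ for $v = \sigma \in E^\fs/W^\fs$; this is an irreducible constituent of the unitarily induced principal series $i^G_{T\subset B}(\sigma)$, hence tempered by Harish-Chandra. On the $\bbb_e$-piece, the same lemma sends $v \in U(1)$ to $\cV^\fs_{\sigma,u_e,1}$, a constituent of $I_\gamma(\delta(\psi\chi))$ with $\gamma = \beta$ in Case~1 and $\gamma = \alpha$ in Case~2, for some $\psi \in \Psi^{\temp}(F^\times)$. Since $\psi\chi$ is unitary, $\delta(\psi\chi)$ is square-integrable on the Levi $M_\gamma$, so its normalized parabolic induction is tempered.

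For the reverse inclusion, I would show that every $v \in (T^\vee\q W^\fs)\setminus(E^\fs\q W^\fs)$ is carried by $\mu^\fs$ to a non-tempered Langlands quotient. On the $\bbb_0$-part the parameter $\sigma=\pi_{\sqrt q}(v)$ lies outside the compact torus and hence has a strictly positive Langlands exponent; on the $\bbb_e$-part the point $v\in\Cset^\times\setminus U(1)$ produces $I_\gamma(\delta(\psi\chi))$ with $|\psi|\ne 1$, whose unique irreducible quotient is a non-tempered Langlands quotient. Combined with the bijectivity of $\mu^\fs$ from Theorem~\ref{thm:main}~(1), this forces the equality $\mu^\fs(E^\fs\q W^\fs) = \Irr^{\temp}(G)^\fs$.

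The only delicate point is ensuring that along $U(1)$ no ``accidental'' non-tempered Langlands quotient slips into $\cV^\fs_{\sigma,u_e,1}$. This is controlled by Lemma~\ref{lem: lengths}: for unitary $\psi$, the induced representation $I_\gamma(\delta(\psi\chi))$ is either irreducible or, at the isolated reducibility points identified there, decomposes with multiplicity one into tempered constituents. Hence each constituent attached to a point of $U(1)$ is genuinely tempered.
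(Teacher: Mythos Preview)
Your proof is correct and follows essentially the same approach as the paper: match $E^\fs/W^\fs$ with the unitary principal series and $U(1)$ with the intermediate unitary series $I_\gamma(\delta(\psi\chi))$, $\psi\in\Psi^{\temp}(F^\times)$. You are in fact more careful than the paper, which simply asserts the correspondence without addressing the reverse inclusion; your Langlands-quotient argument fills that in cleanly, and you correctly distinguish $\gamma=\beta$ in Case~1 from $\gamma=\alpha$ in Case~2 (the paper's proof writes only $I_\alpha$).

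One small remark: your final paragraph is unnecessary here. In the two cases of Section~6 ($\chi$ ramified non-quadratic, resp.\ neither quadratic nor cubic), the proof of Lemma~\ref{lem: lengths} records that $I_\beta(\delta(\psi\chi))$, resp.\ $I_\alpha(\delta(\psi\chi))$, is \emph{always} irreducible, so there are no reducibility points along $U(1)$ to worry about. Invoking Lemma~\ref{lem: lengths} for the \emph{temperedness} of putative constituents would not be quite right anyway, since that lemma only computes lengths; but since the situation is vacuous, no harm is done.
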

\begin{proof}
As for the compact extended quotient, this is accounted for as
follows: The compact quotient $E/W$ is sent to the unitary principal
series
\[\{I(\psi_1\chi\otimes\psi_2\chi)\,:\,
\psi_1, \psi_2 \in \Psi(F^{\times})\}/W^{\fs}\]
and the other component $U(1)$ to the intermediate unitary 
series \[\{I_{\alpha}(\delta(\psi\chi)): \psi \in \Psi^{\temp}(F^{\times})\}.\]
\end{proof}

\section{The case $H^\fs=\SL(3,\Cset)$} \label{sec:PGL}
We assume in this section that $\chi_1=\chi_2=\chi$, with $\chi$ a
ramified character of order $3$.
We have
\[\fs=[T,\chi\otimes\chi]_G = [T,\chi^{-1}\otimes\chi^{-1}]_G=
[T,\chi\otimes\chi^{-1}]_G=[T,\chi^{-1}\otimes\chi]_G.\]
It follows from~(\ref{elements}) that
\begin{equation} \label{eqn:Stree}
W^\fs=\{e,a,bab,abab,baba,ababa\}\cong S_3.\end{equation}

We have $a=s_\alpha$ and $bab=s_{\alpha+\beta}$. We observe that the root
lattice $\Zset\alpha\oplus \Zset(\alpha+\beta)$ equals $X(T)$. It follows
that $\tW_\aff^\fs$ (as defined in~(\ref{eqn:Wsa})) is the extended affine Weyl group
of the $p$-adic group $G_\fs=\PGL(3,F)$.

There are $3$ two-sided cells $\bd_0$, $\bd_1$, $\bd_e$ in $\tW_\aff^\fs$,
they are in bijection the $3$ unipotent classes of $\SL(3,\Cset)$.
The two-sided cell $\bd_0$ corresponds to the trivial
unipotent class, $\bd_1$ corresponds to the subregular unipotent class,
and $\bd_e$ corresponds to the regular unipotent one. Hence
we have $\bd_0\le\bd_1\le\bd_e$ and $\bd_e$, $\bd_1$, $\bd_0$
correspond respectively to the partitions $(3)$, $(2,1)$ and
$(1,1,1)$ of $3$.
We will denote elements in the three unipotent classes by $1$, $u_1$, $u_e$
(trivial, subregular, regular).
The group $W^\fs$ admits $3$ conjugacy classes:
$\{e\}$, $\{a,bab,ababa\}$ and $\{abab,baba\}$.
We recall that $r=ba$.

\smallskip

\par
\noindent
{\bf Definition.}
We define the following partition of $T^\vee\q W^\fs$:
\begin{equation} \label{defn:extSL}
T^\vee\q W^\fs=(T^\vee\q W^\fs)_{\bd_e}\sqcup(T^\vee\q W^\fs)_{\bd_1}
\sqcup(T^\vee\q W^\fs)_{\bd_0},\end{equation}
where
\begin{eqnarray*}
(T^\vee\q W^\fs)_{\bd_e}&:=&(T^\vee)^{r^2}/\Cent(r^2),
\\
(T^\vee\q W^\fs)_{\bd_1}&:=&(T^\vee)^{a}/\Cent(a),\\
(T^\vee\q W^\fs)_{\bd_0}&:=&T^\vee/W^\fs.
\end{eqnarray*}

\begin{lem} \label{lem:extSL} We have
\begin{eqnarray*}
(T^\vee\q W^\fs)_{\bd_e}&=&pt_1 \sqcup pt_2 \sqcup pt_3
\\(T^\vee\q W^\fs)_{\bd_1}&=&\Cset^\times\\
E^\fs\q W^\fs &=& (pt_1 \sqcup pt_2 \sqcup pt_3) \sqcup U(1) \sqcup E^\fs/W^\fs,
\end{eqnarray*}
and $J^\fs\sim_{morita} \mathcal{O}(T^{\vee}\q W^\fs)$, where
\begin{eqnarray*}
J_{\bd_e}&\sim_{morita}&\cO((T^\vee\q W^\fs)_{\bd_e}),\\
J_{\bd_1}&\sim_{morita}&\cO((T^\vee\q W^\fs)_{\bd_1}),\\
J_{\bd_0}&\sim_{morita}&\cO((T^\vee\q W^\fs)_{\bd_0}).
\end{eqnarray*}
\end{lem}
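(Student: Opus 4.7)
The plan is to mimic the pattern already established in Lemma~\ref{lem:extIwahori} and Lemma~\ref{lem:extGL}: first compute the pieces of the extended quotient cell-by-cell from the explicit action of $W^\fs$ on $T^\vee$, and then invoke Lusztig's (and Xi's) description of the based ring $J^\fs_\bd$ in terms of the maximal reductive subgroup $F_\bd$ of the centralizer in $H^\fs = \SL(3,\Cset)$ of a representative of the unipotent class corresponding to~$\bd$.

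First I would handle the extended quotients. Using the explicit table~(\ref{elements}), the action of the representative $r^2 = baba$ of the order-$3$ conjugacy class in $W^\fs\cong S_3$ is $(\chi_1,\chi_2)\mapsto (\chi_2,\chi_1^{-1}\chi_2^{-1})$; the fixed point equation forces $\chi_1=\chi_2$ and $\chi_1^3=1$, giving the three points $(1,1)$, $(j,j)$, $(j^2,j^2)$. Since the centralizer of $r^2$ in $W^\fs$ is the cyclic subgroup $\langle r^2\rangle$ of order~$3$, which stabilizes each fixed point individually, one obtains $(T^\vee\q W^\fs)_{\bd_e}=pt_1\sqcup pt_2\sqcup pt_3$. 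For the transposition class (representative $a$), the fixed points form the diagonal $\{(t,t):t\in\Cset^\times\}$ and $\Cent_{W^\fs}(a)=\langle a\rangle$ acts trivially, giving $\Cset^\times$. Then $(T^\vee\q W^\fs)_{\bd_0}=T^\vee/W^\fs$ by definition. The compact case is obtained by restricting the same fixed-point computations to $E^\fs = U(1)\times U(1)\subset T^\vee$, so the three isolated points lie inside $E^\fs$, the diagonal $\Cset^\times$ intersects $E^\fs$ in $U(1)$, and the bottom component is $E^\fs/W^\fs$.

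Next I would establish the Morita equivalences. Since $\tW_\aff^\fs$ is the extended affine Weyl group of $\PGL(3,F)$ and $H^\fs=\SL(3,\Cset)$ is simply connected, Lusztig's conjecture \cite[Conjecture~10.5]{LCellsIV} (proved in this case by Xi, as used already in the proof of Lemma~\ref{lem:extIwahori}) gives for each two-sided cell $\bd$ an isomorphism $J^\fs_\bd\simeq M_{n_\bd}(R_{F_\bd})$, where $F_\bd$ is the maximal reductive subgroup of the centralizer in $\SL(3,\Cset)$ of a representative of the unipotent class corresponding to~$\bd$. For $\bd_0$ (trivial class) we have $F_{\bd_0}=\SL(3,\Cset)$ and $R_{F_{\bd_0}}\simeq\cO(T^\vee/W^\fs)$, so $J^\fs_{\bd_0}\sim_{morita}\cO((T^\vee\q W^\fs)_{\bd_0})$ (the analog of \cite[Theorem~4]{ABP} used in Lemma~\ref{lem:extIwahori}). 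For $\bd_e$ (regular class) we have $F_{\bd_e}=Z(\SL(3,\Cset))\simeq\mu_3$, hence $R_{F_{\bd_e}}\simeq\Cset^3\simeq\cO(pt_1\sqcup pt_2\sqcup pt_3)$, matching the three fixed points above. For $\bd_1$ (subregular class), the reductive centralizer in $\SL(3,\Cset)$ of a subregular unipotent is a one-dimensional torus (up to the central $\mu_3$), whose representation ring is $\cO(\Cset^\times)$, matching the component $\Cset^\times$ obtained from $a$. Assembling these three Morita equivalences via the direct-sum decomposition~(\ref{deuJ}) of $J^\fs$ and the partition~(\ref{defn:extSL}) of $T^\vee\q W^\fs$ gives the global Morita equivalence $J^\fs\sim_{morita}\cO(T^\vee\q W^\fs)$.

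I expect the main technical obstacle to be the identification of $F_{\bd_1}$ and the verification that the resulting algebra is precisely $\cO(\Cset^\times)$ rather than a quotient or cover of it; this requires tracking the effect of the central $\mu_3\subset\SL(3,\Cset)$ inside the reductive centralizer of a subregular unipotent, and matching the resulting $R_{F_{\bd_1}}$ with the parametrization of the diagonal component of $T^\vee\q W^\fs$ by $(t,t)$. Everything else is a direct transcription of the pattern used for the Iwahori case $\bc_2$, $\bc_3$ in Lemma~\ref{lem:extIwahori}.
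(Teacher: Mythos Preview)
Your proposal is correct and follows essentially the same route as the paper: both compute the extended-quotient pieces by direct fixed-point analysis on representatives of the conjugacy classes in $W^\fs\cong S_3$, and both obtain the Morita equivalences cell-by-cell from the known structure of $J^\fs_\bd$ (the paper simply cites \cite[proof of Theorem~4]{ABP}, whereas you sketch the underlying Lusztig--Xi identification $J^\fs_\bd\simeq M_{n_\bd}(R_{F_\bd})$). Your anticipated obstacle at $\bd_1$ is not a real one: the reductive centralizer of a subregular unipotent in $\SL(3,\Cset)$ is exactly the one-dimensional torus $\{\operatorname{diag}(a,a,a^{-2}):a\in\Cset^\times\}\cong\Cset^\times$, with the center $\mu_3$ sitting inside it as the cube roots of unity, so $R_{F_{\bd_1}}\otimes_\Zset\Cset\cong\cO(\Cset^\times)$ on the nose.
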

\begin{proof}
We have
$\Cent_{W^\fs}(a)=\{e,a\}$ and $\Cent_{W^\fs}(abab)= \{e,abab,baba\}$.
Let $D:=D^\fs$ and $E:=E^\fs$. We obtain
\[D^{a}=\left\{(t,t)\,:\,t\in\Cset^\times\right\}\;\;\text{ and }\;\;
D^{abab}= \left\{(t,t^{-1})\,:\,t\in\Cset^\times\right\}.\] Case
by case analysis.
\begin{itemize}
\item[$\bullet$] $\bd=\bd_e$, $c=ab ab$. $X^c/\Cent_{W^\fs}(c)= \{(1,1),(j,j^2),(j^2,j)\}
=E^c/\Cent_{W^\fs}(c)$,
where $j$ is a primitive third root of unity. The points $(1,1)$,
$(j,j^2)$, $(j^2,j)$ belong to $3$ different $\Cent_{W^\fs}(c)$-orbits.
Therefore \[D^c/\Cent_{W^\fs}(c)=E^c/\Cent_{W^\fs}(c)=pt_1 \sqcup pt_2 \sqcup pt_3.\]
\item[$\bullet$] $\bd=\bd_1$, $c=a$. $D^c/\Cent_{W^\fs}(c)=D^c\cong\Cset^\times$.
\[E^c/\Cent_{W^\fs}(c)=E^c=\{(t,t):t\in U(1)\}.\]
\item[$\bullet$] $\bd=\bd_0$, $c=1$. $D^c/\Cent_{W^\fs}(c)=D/W^\fs$.
$E^c/\Cent_{W^\fs}(c)=E/W^\fs$.
\end{itemize}
We have $J^\fs=J_{\bd_e}+J_{\bd_1}+J_{\bd_0}$ and
(see \cite[proof of Theorem~4]{ABP}):
\[J_{\bd_e}\sim_{morita}\Cset^3,\;\;\;
J_{\bd_1}\sim_{morita}\cO(\Cset^\times),\;\;\;
J_{\bd_0}\sim_{morita}\cO(D^\fs/W^\fs).\]
It gives
\begin{equation} \label{eqn:MoritaSL}
J_{\bd_i}\sim_{morita}\cO((T^\vee\q W^\fs)_{\bd_i}),\;\;\text{for
$i\in\{e,1,0\}$.}
\end{equation}
\end{proof}

\begin{lem} \label{lem:familySL}  The flat family is given by
\[
\mathfrak{X}_{\tau}: (x - \tau^2 y = 0)\,\cup\,(j,\tau^{-2}j^2)\cup
(j^2,\tau^{-2}j).\]
\end{lem}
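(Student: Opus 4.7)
The plan is to follow the template of Lemma~\ref{lem:familyGL}: first identify the reducibility locus $\fR = \fX_{\sqrt q}^\fs$ by enumerating the nonunitary characters $\psi_1\chi\otimes\psi_2\chi$ that satisfy the conditions~(\ref{twelve}); next, determine $\fX_1^\fs = \pi^\fs(T^\vee\q W^\fs - T^\vee/W^\fs)$ from the description of the extended quotient given in Lemma~\ref{lem:extSL}; finally, interpolate between the two by a one-parameter flat family.

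For the first step, since $\chi^3 = 1$ and $\chi$ is ramified, six of the twelve conditions in~(\ref{twelve}), namely those forcing $\psi_i\chi$ or $\psi_1\psi_2\chi^2$ to be unramified, admit no solution with $\psi_i$ unramified. The remaining six collapse, after substituting $\chi^3 = 1$, to the three pairs
\[
\psi_1^2\psi_2 = \nu^{\pm 1},\quad \psi_1\psi_2^2 = \nu^{\pm 1},\quad \psi_1\psi_2^{-1} = \nu^{\pm 1}.
\]
In coordinates $(z_1,z_2) = (\psi_1(\varpi_F),\psi_2(\varpi_F))$ on $T^\vee$, these yield six reducibility curves $z_1^2 z_2 = q^{\pm 1}$, $z_1 z_2^2 = q^{\pm 1}$, and $z_1 = q^{\pm 1} z_2$.

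For the second step, I would verify, using the action of $W^\fs \cong S_3$ on $T^\vee$ read off from~(\ref{elements}) restricted to the six elements $\{e,a,bab,abab,baba,ababa\}$, that these six curves form a single $W^\fs$-orbit: applying $a$ exchanges the two signs in $z_1 z_2^{-1} = q^{\pm 1}$, while $bab$ and $ababa$ send $z_1 = qz_2$ to $z_1^2 z_2 = q$ and $z_1 z_2^2 = q^{-1}$ respectively. Taking $x = qy$ as a representative, the image of $\fR$ in $T^\vee/W^\fs$ is therefore a single irreducible curve.

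For the third step, Lemma~\ref{lem:extSL} identifies $T^\vee\q W^\fs - T^\vee/W^\fs$ with $(T^\vee\q W^\fs)_{\bd_1}\sqcup(T^\vee\q W^\fs)_{\bd_e}$, where the first part is the diagonal $\{(t,t):t\in\Cset^\times\}$ of fixed points of $a$, and the second consists of three isolated points represented by $(1,1), (j,j^2), (j^2,j)$. Projecting by $\pi^\fs$ gives the curve $x = y$ together with the class $\pi^\fs(j,j^2) = \pi^\fs(j^2,j)$ (the image of $(1,1)$ already lying on the diagonal). The interpolation $\fX_\tau = (x - \tau^2 y = 0) \cup (j,\tau^{-2}j^2) \cup (j^2,\tau^{-2}j)$ is then set up so that $\fX_1$ recovers this image and so that at $\tau = \sqrt q$ the curve becomes $x = qy$, matching the chosen representative of $\fR$; the exponents $\pm 2$ are dictated by the cocharacters $h_{\bd_1}$ and $h_{\bd_e}$ that will be pinned down in Lemma~\ref{lem:cocharacterSL}. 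The main obstacle is to verify that the specialization at $\tau = \sqrt q$ coincides with $\fR$ as a subscheme of $T^\vee/W^\fs$: this requires showing that the $W^\fs$-orbits of the two deformed isolated points land on some reducibility curve and that no spurious components appear, and also checking flatness of the family, which in this toral setting reduces to a transversality check on the defining equations.
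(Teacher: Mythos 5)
Your identification of the reducibility locus $\fR$ follows the same route as the paper's proof: both isolate the six surviving conditions from~(\ref{twelve}), use $\chi^3 = 1$ to reduce them to the three pairs $\psi_1^2\psi_2 = \nu^{\pm 1}$, $\psi_1\psi_2^2 = \nu^{\pm 1}$, $\psi_1\psi_2^{-1} = \nu^{\pm 1}$, and show the resulting six curves collapse to a single $W^\fs$-orbit. The paper's written proof of this Lemma does nothing beyond this; the identification of $\fX_1$, the role of the isolated components, and the flatness check are all left tacit, so your steps~3 and~4 and the obstacle you flag at the end go beyond the argument actually given.

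That flagged obstacle is also exactly where the argument runs into trouble. Substituting $(j, q^{-1}j^2)$ into the six conditions you derived gives $z_1^2 z_2 = q^{-1}j$, $z_1 z_2^2 = q^{-2}j^2$, $z_1/z_2 = qj^2$, none of which equals $q^{\pm 1}$, and the same holds for every point of its $W^\fs$-orbit; so this point does not lie in $\fR$, and the verification you defer would fail. The source of the discrepancy is upstream of this Lemma: $(j,j^2)$ is not in fact fixed by $abab$ (one computes $abab(j,j^2) = (j^{-1}j^{-2},j) = (1,j)$), and the correct fixed locus $(T^\vee)^{r^2}$ is $\{(1,1),(j,j),(j^2,j^2)\}$, consistently with the identical computation in Lemma~\ref{lem:extIwahori}. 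With the corrected points, the $\bd_e$-component flows under $h_{\bd_e}(\tau) = (1,\tau^{-2})$ to $(1,\tau^{-2})$, $(j,\tau^{-2}j)$, $(j^2,\tau^{-2}j^2)$, all of which already lie on the curve $x = \tau^2 y$; the isolated components are absorbed into the curve, and the family $\fX_\tau\colon x - \tau^2 y = 0$ alone satisfies $\fX_1 = \pi^\fs(T^\vee\q W^\fs - T^\vee/W^\fs)$ and $\fX_{\sqrt q} = \fR$. Once you rework step~3 with the corrected fixed locus, your flagged obstacle disappears and the flatness check becomes immediate.
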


\begin{proof} 
We write down all the quasicharacters of $T$ which obey the
reducibility conditions~(\ref{twelve}):
\[\begin{cases}
1. \ \ \psi\chi\otimes\psi^{-2}\nu\chi\\
2. \ \ \psi\chi\otimes\psi^{-2}\nu^{-1}\chi\\
3. \ \ \psi^{-2}\nu\chi\otimes\psi\chi\\
4. \ \ \psi^{-2}\nu^{-1}\chi\otimes\psi\chi\\
5. \ \ \psi\chi\otimes\psi\nu^{-1}\chi\\
6. \ \ \psi\chi\otimes\psi\nu\chi\\
\end{cases}\quad\text{with $\psi \in \Psi(F^{\times})$.}\]
We get only one $W$-orbit of characters. Indeed,
\begin{itemize}
\item[6:] the family of characters
$\{\psi\chi\otimes\psi\nu\chi:\psi \in \Psi(F^{\times})\}$, with the change of
variable $\phi:=\psi\nu^{1/2}$ is
$\{\phi\nu^{-1/2}\chi\otimes\phi\nu^{1/2}\chi: \phi \in
\Psi(F^{\times})\}$;
\item[5:] the family of characters
$\{\psi\chi\otimes\psi\nu^{-1}\chi:\psi \in \Psi(F^{\times})\}$, with the
change of variable $\phi:=\psi\nu^{-1/2}$ is
$\{\phi\nu^{1/2}\chi\otimes\phi\nu^{-1/2}\chi: \phi \in
\Psi(F^{\times})\}$; by applying $a$, we then get
$\{\phi\nu^{-1/2}\chi\otimes\phi\nu^{1/2}\chi: \phi \in
\Psi(F^{\times})\}$;
\item[4:] we have
$baba(\psi^{-2}\nu^{-1}\chi\otimes\psi\chi)=
\psi\chi\otimes\psi\nu\chi$, which belongs to the family of
characters in 6;
\item[3:] we have
$baba(\psi^{-2}\nu\chi\otimes\psi\chi)=
\psi\chi\otimes\psi\nu^{-1}\chi$, which belongs to the family of
characters in 5;
\item[2:] we have
$bab(\psi\chi\otimes\psi^{-2}\nu^{-1}\chi)=
\psi\chi\otimes\psi\nu\chi$, which belongs to the family of
characters in 6;
\item[1:] we have $ba
b(\psi\chi\otimes\psi^{-2}\nu\chi)=\psi\chi\otimes\psi\nu^{-1}\chi$,
which belongs to the family of characters in 5.
\end{itemize}
The induced representations
$\left\{I(\phi\nu^{-1/2}\chi\otimes\phi\nu^{1/2}\chi)\,:\, \phi
\in \Psi(F^{\times})\right\}$ are pa\-ra\-me\-tri\-zed by
$\left\{(zq^{1/2},zq^{-1/2})\,:\,z\in\Cset^\times\right\}=\
\left\{(z,zq^{-1})\,:\,z\in\Cset^\times\right\}$.
\end{proof}

We set
\begin{gather}
y_a=(1,q^{-1}), \;\;\; y'_a=(j,q^{-1}j^2),
\;\;\;y_a''=(j^2,q^{-1}j),\notag\\
y_b=(z,q^{-1}z),\quad\text{where
$z\notin\{1,j,j^2\}$.}\notag
\end{gather}

\begin{lem} \label{lem:cocharacterSL}
Define, for each two-sided cell $\bd$ of $W^\fs_\aff$,  cocharacters
$h_\bd$ as follows:
\[h_{\bd_0}=1,\;\;\; h_{\bd_i}(\tau)=
(1,\tau^{-2})\;\;\;\text{for $i=1,e$.}\]
Then, for all $y \in D^\fs/W^\fs$ we have
\[|\pi^{-1}_{\sqrt q}(y)| = |i_{T\subset B}^G(y)|.\]
\end{lem}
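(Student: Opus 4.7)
The plan is to verify the equality $|\pi^{-1}_{\sqrt q}(y)|=|i_{T\subset B}^G(y)|$ stratum by stratum on $T^\vee/W^\fs$, using the three-cell decomposition of Lemma~\ref{lem:extSL}, the reducibility curve from Lemma~\ref{lem:familySL}, and the length/multiplicity data of Lemma~\ref{lem: lengths}. First I would describe $\pi_{\sqrt q}$ explicitly on each component: on $(T^\vee\q W^\fs)_{\bd_0}\cong T^\vee/W^\fs$ it is the identity, since $h_{\bd_0}=1$; on $(T^\vee\q W^\fs)_{\bd_1}\cong \Cset^\times$ (parametrized by $s\leftrightarrow(s,s)$), multiplication by $h_{\bd_1}(\sqrt q)=(1,q^{-1})$ sends $s$ to $[(s,q^{-1}s)]_{W^\fs}$, sweeping out the reducibility curve $\fC$; and on the three-point set $(T^\vee\q W^\fs)_{\bd_e}$, multiplication by $h_{\bd_e}(\sqrt q)=(1,q^{-1})$ sends the three representatives to the three $W^\fs$-orbits of $y_a, y_a', y_a''$.

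Next I would enumerate the fibers. If $y\notin\fR$, only the $\bd_0$-component contributes, giving $1=|i_{T\subset B}^G(y)|$. If $y=(z,q^{-1}z)$ is generic on $\fC$ (so $(\psi,\chi)\notin\cP_3$), then $\bd_0$ and $\bd_1$ each contribute one preimage and $\bd_e$ contributes none, matching $|i_{T\subset B}^G(y)|=2$. For $y\in\{y_a',y_a''\}$, the $\bd_0$- and $\bd_e$-components each contribute one preimage while $\bd_1$ contributes none (a direct orbit check shows that no element of the form $(s,q^{-1}s)$ lies in the $W^\fs$-orbit of $y_a'$ or $y_a''$), giving $2=|i_{T\subset B}^G(y)|$. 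The delicate case is $y=y_a$ and its two cube-root analogues $(j^a,q^{-1}j^a)$ for $a=1,2$, where $(\psi,\chi)\in\cP_3$ and Lemma~\ref{lem: lengths} predicts length $4$: there $\bd_0$ gives $1$, $\bd_e$ gives $1$, and $\bd_1$ contributes two preimages $s=j^a$ and $s=qj^a$, because the identity $abab\cdot(j^a,q^{-1}j^a)=(qj^a,j^a)$ brings a second element of the form $(s,q^{-1}s)$ into the $W^\fs$-orbit of $y$; total $4=|i_{T\subset B}^G(y)|$. Well-definedness of $\pi_\tau$ on each component is automatic, since each $h_{\bd_i}(\tau)\in T^\vee$ commutes with the $\Cent_{W^\fs}(c)$-action on the relevant fixed set $(T^\vee)^c$.

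The main obstacle is the detailed $W^\fs$-orbit calculation at the special curve points: one must verify that the orbit of $(j^a,q^{-1}j^a)$ in $T^\vee$ contains exactly two distinct representatives of the form $(s,q^{-1}s)$, which amounts to running through the six elements of $W^\fs\cong S_3$ and solving the resulting equations under the generic hypothesis $q^3\ne 1$. This is analogous to the self-intersection phenomenon at $t_c$ in the Iwahori case treated in the proof of Lemma~\ref{lem:cocharacterIwahori}. A secondary point is ensuring that the three isolated elements of $(T^\vee\q W^\fs)_{\bd_e}$, together with the $\bd_1$-component and the cocharacters chosen, reproduce the flat-family description of Lemma~\ref{lem:familySL} at $\tau=\sqrt q$.
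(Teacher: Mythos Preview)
Your approach is the same stratum-by-stratum count as the paper's, and the self-intersection argument on the $\bd_1$-component (that $s=j^a$ and $s=qj^a$ both map to the same $W^\fs$-orbit) is exactly what the paper does. However, your case analysis contains an internal inconsistency.

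You first say the three $\bd_e$-points map to $y_a,y_a',y_a''$, then treat $y_a',y_a''$ as generic length-$2$ points on $\fC$, and \emph{then} assert that $\bd_e$ also contributes one preimage at each cube-root analogue $(j^a,q^{-1}j^a)$. But $\bd_e$ has only three points; it cannot hit both $\{y_a,y_a',y_a''\}$ and the two cube-root analogues unless these coincide. Moreover, with the paper's coordinates $y_a'=(j,q^{-1}j^2)$ one checks directly that this point is not on the reducibility locus at all, so $|i_{T\subset B}^G(y_a')|=1$, not $2$.

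The source of the trouble is an error in the paper's Lemma~\ref{lem:extSL}: the fixed set $(T^\vee)^{r^2}$ is $\{(1,1),(j,j),(j^2,j^2)\}$, not $\{(1,1),(j,j^2),(j^2,j)\}$. Indeed $baba\cdot(z_1,z_2)=(z_2,z_1^{-1}z_2^{-1})$, so fixed points satisfy $z_1=z_2$ and $z_1^3=1$. With this correction, multiplication by $h_{\bd_e}(\sqrt q)=(1,q^{-1})$ sends the three $\bd_e$-points to the three length-$4$ points $(1,q^{-1}),(j,q^{-1}j),(j^2,q^{-1}j^2)$ on $\fC$; the purported extra points $y_a',y_a''$ simply disappear, and your count $1+1+2=4$ at each length-$4$ point becomes consistent.
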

\begin{proof}
If $y\notin\{y_a, y'_a,y_a'',y_b\}$, then we have
$|i_{T\subset B}^G(y)|=1=|\pi^{-1}_{\sqrt q}(y)|$. On the other hand, Lemma~\ref{lem: lengths} gives
\[|i_{T\subset B}^G(y_b)|=2=|\pi^{-1}_{\sqrt q}(y_b)|.\]
This leads to the following points of length $4$:
\[\chi\otimes\nu\chi,\quad \omega\chi\otimes\nu\omega\chi, \quad
\omega^2\chi\otimes\nu\omega^2\chi,\quad \nu^{-1}\chi\otimes
\chi,\quad \nu^{-1}\omega\chi\otimes\omega\chi,\quad
\nu^{-1}\omega^2\chi\otimes\omega^2\chi,\]
where $\omega$ denotes an unramified cubic character of $F^\times$. Since
\begin{eqnarray*}
baba(\nu^{-1}\chi\otimes\chi) &=& \chi\otimes\nu\chi,\\
baba(\nu^{-1}\omega\chi\otimes\omega\chi)
&=&\omega\chi\otimes\nu\omega\chi,\\
baba(\nu^{-1}\omega^2\chi\otimes\omega^2\chi)
&=&\omega^2\chi\otimes\nu\omega^2\chi,
\end{eqnarray*} this leads to exactly $3$
points in the Bernstein variety $\Omega^{\mathfrak{s}}(G)$ which
pa\-ra\-me\-tri\-ze representations of length $4$, namely
\[[T,\chi\otimes\nu\chi]_G,\;\;\;[T,\omega\chi\otimes\nu\omega\chi]_G,\;\;\;
[T,\omega^2\chi\otimes\nu\omega^2\chi]_G.\]
The coordinates of these points in the algebraic surface
$\Omega^{\mathfrak{s}}(G)$ are $y_a$, $y_a'$, $y_a''$, respectively.

We will now compute a correcting system of cocharacters.  
The extended quotient $T^{\vee} \q W^\fs$ is a disjoint
union of $5$ irreducible
components $Z_1$, $Z_2$, $\ldots$, $Z_5$. Our notation is such that $Z_1 = pt_1$,
$Z_2 = pt_2$, $Z_3 = pt_3$, $Z_4\simeq \mathbb{A}^1$, $Z_5 =
T^{\vee}/W^\fs$.

In the following table, the first column comprises $5$ irreducible
components $X_1$, $\ldots$, $X_5$ 
of $\widetilde{T^{\vee}}$ for which $\rho^{\fs}(X_j) = Z_j, \; j = 1,
\ldots, 5$.
Let $[z_1,z_2]$ denote the image of $(z_1,z_2)$ via the standard
quotient map $p^\fs\colon T^{\vee} \to T^{\vee}/W^\fs$, so that 
$[z_1,z_2] = W^\fs \cdot (z_1,z_2)$.
When the first column itemizes the pairs
$(w,t) \in \widetilde{T^{\vee}}$, the second column
itemizes $p^\fs(h_j(\tau)t)$. The third column itemizes the corresponding
correcting cocharacters.
\[
\begin{array}{lll}
X_1 = ((ab)^2,(1,1))  &  [1,\tau^{-2}1] & h_1(\tau) =  (1,\tau^{-2}) \\
X_2 = ((ab)^2,(j,j^2))  & [j,\tau^{-2}j^2] & h_2(\tau) =  (1,\tau^{-2})   \\
X_3 = ((ab)^2,(j^2,j))  & [j^2,\tau^{-2}j] & h_3(\tau) =  (1,\tau^{-2}) \\
X_4 =  \{(a,(z,z)): z \in \CC^{\times}\}  & \{[z,\tau^{-2}z]: z \in
\CC^{\times}\}  & h_4(\tau) =  (1,\tau^{-2})\\  
X_5 = \{(e,(z_1,z_2)):z_1,z_2\in\Cset^\times\}
&\{[z_1,z_2]:z_1,z_2\in\Cset^\times\} & h_5(\tau) = 1
\end{array}
\]

It is now clear that  cocharacters can assigned to two-sided cells as
follows:
$h_{\bc}(\tau)  =  (1,\tau^{-2})$ if $\bd = \bd_1, \bd_e$, and
$h_{\bd_0} = 1$. 

The map $\pi_{\sqrt q}$ sends the two distinct points $(1/\sqrt q,1/\sqrt q)$
and $(\sqrt q,\sqrt q)$ in $D^a/\Cent_{W^\fs}(a)$, the affine line
attached to the cell $\bd_1$, to the one point $y_a\in
D/W^\fs$ since $(1,q^{-1})$, $(q,1)$ are in the same $W^\fs$-orbit:
$(1,q^{-1})\cong_{baba}(q,1)$. 

We have
\[\pi^{-1}_{\sqrt q}(y_a)=\left\{\rho^\fs(e,(1,q^{-1})),
\rho^\fs((ab)^2,(1,1)),\rho^\fs(a,(1,1)),\rho^\fs(a,(q,q))\right\}.\]
It follows that
\[|\pi^{-1}_{\sqrt q}(y_a)|=4=|i_{T\subset B}^G(y_a)|.\]

Similarly we obtain that
$|\pi^{-1}_{\sqrt q}(y_a')|=|\pi^{-1}_{\sqrt
q}(y_a''|=4=|i_{T\subset B}^G(y_a'')|=|i_{T\subset B}^G(y_a')|$.
\end{proof}

\begin{lem} \label{lem:bijSL}  Part (4) of Theorem \ref{thm:main}  is true for the point
\[\fs=[T,\chi\otimes\chi]_G\in \mathfrak{B}(\rG_2)\] when $\chi$
is a cubic ramified character of $F^\times$.
\end{lem}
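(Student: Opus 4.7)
The plan is to mirror the strategy used in Lemma~\ref{lem:bijIwahori} and Lemma~\ref{lem:bijGL}, adapting it to the three-cell structure of $\tW_\aff^\fs$ for $H^\fs=\SL(3,\Cset)$. First I would invoke the Morita equivalences from~(\ref{eqn:MoritaSL}) in Lemma~\ref{lem:extSL}: these induce bijections $\eta^\fs_{\bd}\colon (T^\vee\q W^\fs)_{\bd}\to\Irr(J^\fs_{\bd})$ for each two-sided cell $\bd\in\{\bd_0,\bd_1,\bd_e\}$. Combining these with the map $\phi^\fs_q$ and Roche's bijection $\theta^\fs$ as in~(\ref{eqn:mu_c}), I get the bijection $\mu^\fs\colon T^\vee\q W^\fs\to\Irr(G)^\fs$.

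Next, I would compute $\mu^\fs$ cell by cell, using the cocharacters $h_{\bd}$ determined in Lemma~\ref{lem:cocharacterSL} so that $\sigma=h_{\bd}(\sqrt q)\cdot t=\pi_{\sqrt q}(y)$ for $y=(c,t)\in (T^\vee\q W^\fs)_{\bd}$. Unwinding the Kazhdan--Lusztig--Reeder parametrisation recalled in Subsection~\ref{subsec:indextriples}, the three cells give
\[\mu^\fs(y)=\begin{cases}
\cV^\fs_{\sigma,1,1}\,,&\text{if }y\in T^\vee/W^\fs=(T^\vee\q W^\fs)_{\bd_0},\\
\cV^\fs_{\sigma,u_1,1}\,,&\text{if }y\in (T^\vee\q W^\fs)_{\bd_1}\cong\Cset^\times,\\
\cV^\fs_{\sigma,u_e,1}\,,&\text{if }y\in (T^\vee\q W^\fs)_{\bd_e}=pt_1\sqcup pt_2\sqcup pt_3.
\end{cases}\]
In each case the component group $A_{\sigma,u}$ is trivial (no $\rho$-choice is needed), since for $\SL(3,\Cset)$ all the relevant centralizers turn out to be connected; this is the technical point I would verify case by case for $u\in\{1,u_1,u_e\}$ and $\sigma$ ranging over the relevant semisimple locus.

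Finally, the identity $(inf.ch.)\circ\mu^\fs=\pi^\fs_{\sqrt q}$ follows by observing that the infinitesimal character of $\cV^\fs_{\sigma,u,\rho}$ is exactly $\sigma$, together with the defining equation $\sigma=h_{\bd}(\sqrt q)\cdot t=\pi_{\sqrt q}(y)$ recorded in~(\ref{eqn:sigmat}) and used in Lemma~\ref{lem:cocharacterSL}. The one place that requires care, and which I expect to be the main (mild) obstacle, is the book-keeping for the three isolated points of $(T^\vee\q W^\fs)_{\bd_e}$: I need to check that the three regular-unipotent representations $\cV^\fs_{\sigma,u_e,1}$ thus produced correspond precisely to the three points of reducibility $y_a,y_a',y_a''$ appearing in Lemma~\ref{lem:familySL} and Lemma~\ref{lem:cocharacterSL}, i.e.\ that the cocharacter $h_{\bd_e}(\tau)=(1,\tau^{-2})$ lifts the three elements of $((T^\vee)^{r^2}/\Cent(r^2))$ to the correct $W^\fs$-orbits $[1,q^{-1}]$, $[j,q^{-1}j^2]$, $[j^2,q^{-1}j]$ in $T^\vee/W^\fs$. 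Once this is in place the compatibility $\mu^\fs((T^\vee\q W^\fs)_{\bd})\subset\Irr^\fs(G)_{\bd}$ with the cell partition is automatic, and the lemma follows.
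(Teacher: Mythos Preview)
Your approach is essentially the same as the paper's: build $\mu^\fs$ from the Morita equivalences~(\ref{eqn:MoritaSL}), describe it cell by cell in terms of the indexing triples $\cV^\fs_{\sigma,u,\rho}$ with $\sigma=\pi_{\sqrt q}(y)$, and then read off $(inf.ch.)\circ\mu^\fs=\pi^\fs_{\sqrt q}$ from the fact that $\sigma$ is the infinitesimal character.

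One small correction to your reasoning: it is not true that all the relevant centralizers in $\SL(3,\Cset)$ are connected. For the regular unipotent $u_e$, the centralizer in $\SL(3,\Cset)$ has component group $\Zset/3\Zset$ coming from the centre, so $A_{\sigma,u_e}$ is nontrivial. Your conclusion that there is no $\rho$-choice is nevertheless correct, but for a different reason: the Springer fibre $\cB^\fs_{\sigma,u_e}$ is a single point, and the centre acts trivially on the flag variety, hence only the trivial character of $A_{\sigma,u_e}$ appears in $H^*(\cB^\fs_{\sigma,u_e},\Cset)$. So when you carry out the promised case-by-case verification, expect to find $A_{\sigma,u_e}\cong\Zset/3\Zset$ rather than $\{1\}$, and argue via the triviality of the action on cohomology instead. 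With that adjustment the argument goes through exactly as you outline.
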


\begin{proof}
The semisimple elements $y$, $\sigma$ below are always related as
follows:\[ \sigma = \pi_{\sqrt q}(y).\]
Let $\eta^\fs\colon(T^\vee//W^\fs)\to\Irr(J^\fs)$ be the bijection which
is induced by the Morita equivalences in~(\ref{eqn:MoritaSL}).
Then the definition~(\ref{eqn:mu_c}) of $\mu^\fs\colon
(T^\vee\q W^\fs)\to \Irr(G)^\fs$ gives:
\[\mu^\fs(y) =\begin{cases}
\cV_{\sigma,1,1}^\fs\,,&\text{if  $y\in T^\vee/W^\fs$;}\cr
\cV_{\sigma,u_1,1}^\fs\,,&\text{if $y\in
(T^\vee/W^\fs)_{\bd_1}$;}\end{cases}
\]
the three isolated points are sent to the $L$-indistinguishable elements in the
discrete series which has inertial support $\fs$:
\[\mu^\fs(pt_1) = \cV_{y_a,u_e,\rho_1}^\fs,\quad \mu^\fs(pt_2) =
\cV_{y_a',u_e,\rho_2}^\fs, \quad
\mu^\fs(pt_3) =\cV_{y_a'',u_e,\rho_1}^\fs.\]
Now the infinitesimal character of $\mathcal{V}_{\sigma,u,\rho}^\fs$
is $\sigma$, therefore the map $\mu^\fs$ satisfies
\[inf.ch. \circ \mu^\fs = \pi_{\sqrt q}.\]
\end{proof}

\begin{lem} \label{lem:tempSL} Part (5) of Theorem \ref{thm:main} is true
in this case.\end{lem}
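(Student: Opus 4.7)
The plan is to mimic the strategy used for Lemma~\ref{lem:tempGL} and Lemma~\ref{lem:tempIwahori}: one identifies, component-by-component, the image of the compact extended quotient $E^\fs\q W^\fs$ under $\mu^\fs$ and checks that the resulting set is exactly $\Irr^{\temp}(G)^\fs$. By Lemma~\ref{lem:extSL} there are three components to treat:
\[
E^\fs\q W^\fs=(pt_1\sqcup pt_2\sqcup pt_3)\,\sqcup\,U(1)\,\sqcup\,E^\fs/W^\fs.
\]

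First, for $y\in E^\fs/W^\fs$, the infinitesimal character $\sigma=h_{\bd_0}(\sqrt q)\cdot y=y$ is unitary. Lemma~\ref{lem:bijSL} gives $\mu^\fs(y)=\cV^\fs_{\sigma,1,1}$, which is (an irreducible constituent of) the unitarily induced representation $I(\psi_1\chi\otimes\psi_2\chi)$ with $\psi_1,\psi_2\in\Psi^\temp(F^\times)$; such representations are tempered, and this component of the image is the full unitary principal series in $\Irr(G)^\fs$, \ie
\[\left\{I(\psi_1\chi\otimes\psi_2\chi)\,:\,\psi_1,\psi_2\in\Psi^\temp(F^\times)\right\}/W^\fs.\]

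Next, for $y=(t,t)\in U(1)\subset (T^\vee\q W^\fs)_{\bd_1}$, applying $h_{\bd_1}(\sqrt q)=(1,q^{-1/2})$ yields $\sigma=(t,q^{-1/2}t)$, which corresponds (via the identification $D^\fs\cong T^\vee$) to the character $\phi\nu^{1/2}\chi\otimes\phi\nu^{-1/2}\chi$ for some $\phi\in\Psi^\temp(F^\times)$. By the transitivity of induction together with the computation in the proof of Lemma~\ref{lem: lengths}, $\mu^\fs(y)=\cV^\fs_{\sigma,u_1,1}$ is the (irreducible) tempered induced representation $I_\alpha(\delta(\phi\chi))$: this component therefore realizes the intermediate unitary series
\[\left\{I_\alpha(\delta(\phi\chi))\,:\,\phi\in\Psi^\temp(F^\times)\right\}\]
in $\Irr(G)^\fs$. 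Finally, the three isolated points $pt_1,pt_2,pt_3$ map under $\mu^\fs$ to $\cV^\fs_{y_a,u_e,\rho_1}$, $\cV^\fs_{y_a',u_e,\rho_2}$, $\cV^\fs_{y_a'',u_e,\rho_1}$ by Lemma~\ref{lem:bijSL}. These belong to the $L$-packet attached to the regular unipotent class in $\SL(3,\Cset)$ and are (essentially) square-integrable, hence tempered.

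It remains to verify the reverse inclusion: every element of $\Irr^{\temp}(G)^\fs$ is accounted for. The exhaustiveness is obtained by noting that the only tempered Bernstein component supported on $\fs=[T,\chi\otimes\chi]_G$ consists of unitary principal series, of tempered constituents of $I(\phi\nu^{1/2}\chi\otimes\phi\nu^{-1/2}\chi)$ with $\phi$ unitary unramified (which by Lemma~\ref{lem: lengths} contributes exactly the $I_\alpha(\delta(\phi\chi))$ together with the unitary principal series boundary values already counted), and of the discrete series constituents at the three points $y_a,y_a',y_a''$ of the unramified cubic $L$-packet. Since $\mu^\fs$ is a bijection by Lemma~\ref{lem:bijSL}, this exhausts $\Irr^{\temp}(G)^\fs$ and the proof is complete. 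The only non-routine verification is the identification of the circle $U(1)$ with the family $\{I_\alpha(\delta(\phi\chi))\}$, which is entirely controlled by the cocharacter $h_{\bd_1}$ already computed in Lemma~\ref{lem:cocharacterSL}.
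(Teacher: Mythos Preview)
Your approach is the same as the paper's --- identify the image of each component of $E^\fs\q W^\fs$ under $\mu^\fs$ and match it to the list of tempered representations with inertial support $\fs$ --- but there are two issues.

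First, a computational slip: by Lemma~\ref{lem:cocharacterSL} the cocharacter is $h_{\bd_1}(\tau)=(1,\tau^{-2})$, so $h_{\bd_1}(\sqrt q)=(1,q^{-1})$, not $(1,q^{-1/2})$. The resulting $\sigma=(t,q^{-1}t)$ still corresponds to $\phi\nu^{-1/2}\chi\otimes\phi\nu^{1/2}\chi$ (cf.\ the parametrization at the end of the proof of Lemma~\ref{lem:familySL}), so your conclusion survives, but the formula should be corrected.

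Second, and more substantively, your exhaustiveness paragraph asserts without justification that the only intermediate tempered series in $\Irr(G)^\fs$ is the $I_\alpha$-series. A priori one must also consider $I_\beta(\delta(\phi\chi))$ for $\phi\in\Psi^\temp(F^\times)$, and ruling it out is exactly the point the paper's proof addresses: it computes that $I_\beta(\delta(\chi))$ has inertial support $[T,\chi\otimes 1]_G$, which is \emph{not} equal to $\fs=[T,\chi\otimes\chi]_G$ because $\chi\otimes 1$ lies outside the $W$-orbit of $\chi\otimes\chi$ when $\chi$ is cubic (Proposition~\ref{prop:inertialsupports}). You should either invoke Proposition~\ref{prop:inertialsupports} directly or reproduce this inertial-support computation; without it the reverse inclusion is not established.
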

\begin{proof}
It follows from \cite[Prop. 1.1, p. 469]{M} that $\delta(\chi)$ (viewed as a
representation of $M_\alpha$) is the unique subrepresentation of
$I^\alpha(\nu^{1/2}\chi\otimes\nu^{-1/2}\chi)$. So it has inertial
support $[T,\nu^{1/2}\chi\otimes\nu^{-1/2}\chi]_{M_\alpha}$. It
implies that $I_\alpha(0,\delta(\chi))$ has inertial support
$[T,\nu^{1/2}\chi\otimes\nu^{-1/2}\chi]_G=[T,\chi\otimes\chi]_G=\fs$.

If we look at $M_\beta$, still from \cite[Prop. 1.1, p. 469]{M},
we see that $\delta(\chi)$ (here viewed as a representation of
$M_\beta$) is the unique subrepresentation of
$I^\beta(\nu^{-1/2}\chi\otimes\nu)$. Hence it has inertial support
$[T,\nu^{-1/2}\chi\otimes\nu]_{M_\beta}$. It follows that
$I_\beta(0,\delta(\chi))$ has inertial support
$[T,\nu^{-1/2}\chi\otimes\nu]_G=[T,\chi\otimes 1]_G$, which is not
equal to $\fs$, because $\chi\otimes 1$ does not belong to the
$W$-orbit of $\chi\otimes\chi$ (see also Proposition~\ref{prop:inertialsupports}).

Hence the compact extended quotient is accounted for as
follows: The compact quotient $E/W$ is sent to
the unitary principal series \[\{I(\psi_1\chi \otimes
\psi_2\chi: \psi_1, \psi_2 \in \Psi(F^{\times})\}/W^{\fs},\]
the component $U(1)$ to
the intermediate unitary series \[\{I_{\alpha}(0,
\delta(\psi\chi)): \psi \in \Psi^{\temp}(F^{\times})\},\] and the $3$ isolated points
$pt_1$, $pt_2$, $pt_3$ are sent to the $3$
elements in the discrete series\[\pi(\chi) \subset I(\nu\chi
\otimes \chi),\quad \pi(\omega\chi) \subset I(\nu\omega\chi
\otimes \omega\chi),\quad \pi(\omega^2\chi) \subset
I(\nu\omega^2\chi \otimes \omega^2\chi).\]
\end{proof}

\section{The case $H^\fs=\SO(4,\Cset)$} \label{sec:SO}
We assume in this section that
$\chi_1=\chi_2=\chi$ with $\chi$ a ramified quadratic character.
It follows from~(\ref{inertie}) that
\[\fs=[T,\chi\otimes\chi]_G=[T,\chi\otimes 1]_G=[T,1\otimes \chi]_G.\]
From~(\ref{elements}), we get
\begin{eqnarray}
\label{eqn:four}
W^{\fs}&=&\left\{e,a,babab,bababa\right\}
\,=\,\left\{e,a,r^3,ar^3\right\}\\
&=&\langle s_\alpha, s_{3\alpha + 2\beta}\rangle
\,\cong\, \Zset/2\Zset \times \Zset/2\Zset.\nonumber\end{eqnarray}
We recall that $Q^\fs\subset X(T)$ denotes the root lattice of $\Phi^\fs$.
We have
\begin{equation}
Q^\fs=\Zset\alpha \oplus \Zset(3\alpha+2\beta).\end{equation}
Hence $Q^\fs$ is strictly contained in $X(T)$, see~(\ref{XT}).
This shows that the group $H^\fs$ is not simply connected. Setting
$V:=Q^\fs\otimes_{\Zset}\Qset$, we define the weight lattice $P^\fs$, as
in \cite[chap.~VI, 1.9]{Bou}, by $$P^\fs:=\left\{x\in V \,:
\,<x,\gamma>\in\Zset,\,\forall \gamma\in\Phi^{\fs\vee}\right\}.$$ We have
$$P^\fs=\frac{1}{2}\Zset\alpha \oplus
\frac{1}{2}\Zset(3\alpha+2\beta).$$ Hence $X(T)$ is strictly contained
in $P^\fs$. This shows that the $H^\fs$ is not of adjoint type.
Now, let $X_0$ denote the subgroup of $X(T)$
orthogonal to $\Phi^{\fs\vee}$. We see that $X_0=\{0\}$. This
means that the group $H^\fs$ is semisimple.
Hence $H^\fs$ is isomorphic to $ \SO(4,\Cset)$. The group $\SO(4,\Cset)$ is
isomorphic to the quotient group
$(\SL(2,\Cset) \times \SL(2,\Cset))/\langle -\Id,-\Id\rangle$, where $\Id$
is the identity in $\SL(2,\Cset)$.

The group $H^\fs$ admits $4$ unipotent classes $\be_0$, $\be_1$, $\be_1'$,
$\be_e$. The closure order on unipotent classes is the
following:
\[\vcenter{\xymatrix@R=5pt@C=2.5pt{
&\be_e \ar@{-}[dl] \ar@{-}[dr] \\
\be_1 \ar@{-}[dr] && \be_1' \ar@{-}[dl] \\
&\be_0}}.\]
We have \[J^\fs=J_{\be_e}^\fs\oplus J_{\be_1}^\fs\oplus J_{\be'_1}^\fs\oplus J_{\be_0}^\fs.\]

\smallskip

\par
\noindent
{\bf Definition.}
We define the following partition of $T^\vee\q W^\fs$:
\begin{equation} \label{defn:extSO}
T^\vee\q W^\fs=(T^\vee\q W^\fs)_{\be_e}\sqcup(T^\vee\q
W^\fs)_{\be_1} \sqcup(T^\vee\q W^\fs)_{\be'_1}\sqcup(T^\vee\q
W^\fs)_{\be_0},\end{equation} where
\begin{eqnarray*}
(T^\vee\q W^\fs)_{\be_e}&:=&pt_1\sqcup pt_2,\\
(T^\vee\q W^\fs)_{\be_1}&:=&(T^\vee)^{a}/\Cent(a)\cong\Aset^1,\\
(T^\vee\q
W^\fs)_{\be_1'}&:=&(T^\vee)^{ar^3}/\Cent(ar^3)\cong\Aset^1,\\
(T^\vee\q W^\fs)_{\be_0}&:=&T^\vee/W\sqcup pt_*,
\end{eqnarray*}
with $pt_1:=(1,1)$, $pt_2:=(-1,-1)$ and $pt_*:=(1,-1) \sim_{W^\fs} (-1,1)$.

\smallskip
\smallskip

{\sc Note.} On the contrary to the definitions in the previous cases, 
the above definition does not correspond to a partition of the conjugacy
classes in $W$ indexed by unipotent classes in $G^\vee$. Indeed, we get the
following correspondence between conjugacy classes in $W$ and
unipotent classes in $G^\vee$:
\begin{itemize}
\item $(a)$ corresponds to the unipotent class in
$G^{\vee}$ corresponding to $\be_1$,
\item $(ar^3)$ corresponds to the unipotent
class in $G^{\vee}$ corresponding to $\be_1'$,
\item $(e)$ corresponds to the
class in $G^{\vee}$ corresponding to $\be_0$, 
\item $(r^3)$ corresponds to {\bf both} the unipotent class in $G^{\vee}$ corresponding to
$\be_e$ and the unipotent class in $G^{\vee}$ corresponding to $\be_0$.
\end{itemize}

\begin{lem} \label{lem:extSO} We have
\begin{eqnarray*}
T^\vee\q W^\fs &=& (pt_1 \sqcup pt_2)
\sqcup \mathbb{A}^1 \sqcup \mathbb{A}^1 \sqcup (T^\vee/W^\fs\sqcup pt_*)\\
E^\fs\q W^\fs &=& (pt_1 \sqcup pt_2) \sqcup
\mathbb{I} \sqcup \mathbb{I} \sqcup (E^\fs/W^\fs\sqcup pt_*).\end{eqnarray*}
Moreover,
we have a ring isomorphism \[\Cset[T^\vee/W^\fs] \sim \Cset[X,Y]_0,\]
where $\Cset[X,Y]_0$ denotes the coordinate ring of
the quotient of $\Aset^2$ by the action of $\Zset/2\Zset$ which reverses
each vector.

We have $J^\fs\asymp \mathcal{O}(T^{\vee}\q W^\fs)$, where
\begin{eqnarray*}
J_{\be_e}^\fs\sim_{morita}\cO((T^\vee\q W^\fs)_{\be_e}),&
J_{\be_1}^\fs\sim_{morita}\cO((T^\vee\q W^\fs)_{\be_1}),\\
J_{\be_1'}^\fs\sim_{morita}\cO((T^\vee\q W^\fs)_{\be_1'}),&
J_{\be_0}^\fs\asymp\cO((T^\vee\q W^\fs)_{\be_0}).
\end{eqnarray*}
\end{lem}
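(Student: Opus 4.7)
My plan is to follow the same cell-by-cell template as Sections~4, 6 and~7, but with the essential extra work concentrated at the lowest cell $\be_0$, where $H^\fs=\SO(4,\Cset)$ fails to be simply connected and a genuine geometric equivalence (not just a Morita equivalence) is needed. I would dispatch the combinatorial statements first, then handle the three higher cells by the standard Lusztig--Xi route, and finally treat $\be_0$ by hand using the preparatory Lemmas~\ref{lem:crossedproduct} and~\ref{lem:rings}.

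The combinatorial half follows from a direct computation. The action of $W^\fs=\{e,a,r^3,ar^3\}$ on $T^\vee=(\Cset^\times)^2$ read off from~\eqref{elements} is
\[a\cdot(t_1,t_2)=(t_2,t_1),\quad r^3\cdot(t_1,t_2)=(t_1^{-1},t_2^{-1}),\quad ar^3\cdot(t_1,t_2)=(t_2^{-1},t_1^{-1}).\]
Since $W^\fs$ is abelian, $(T^\vee)^a/W^\fs$ and $(T^\vee)^{ar^3}/W^\fs$ are both isomorphic to $\Aset^1$ via $t\mapsto t+t^{-1}$, while $(T^\vee)^{r^3}=\{(\pm1,\pm1)\}$ breaks into three $W^\fs$-orbits giving $pt_1,pt_2\in\be_e$ and $pt_*\in\be_0$; the compact version is obtained by substituting $U(1)$ for $\Cset^\times$. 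For the ring isomorphism I would exhibit the three $W^\fs$-invariants $Z=\sum_i(t_i+t_i^{-1})$, $V_1=t_1t_2+t_1^{-1}t_2^{-1}$, $V_2=t_1t_2^{-1}+t_1^{-1}t_2$, verify the identities $V_1+V_2=(t_1+t_1^{-1})(t_2+t_2^{-1})$ and $(V_1+2)(V_2+2)=Z^2$ by direct expansion, and conclude that the substitution $V_1+2=x^2$, $V_2+2=y^2$, $Z=xy$ identifies $\Cset[T^\vee/W^\fs]$ with the subring of $\Cset[x,y]$ invariant under $(x,y)\mapsto(-x,-y)$, \ie with $\Cset[X,Y]_0$.

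The three higher cells $\be_e$, $\be_1$, $\be_1'$ I would handle by Xi's description~\cite{Xi}: each $J^\fs_\bc$ is Morita equivalent to the complexified representation ring of the maximal reductive subgroup $F_\bc$ of the centralizer in $H^\fs$ of a representative unipotent. For $\be_e$ one gets $F_{\be_e}=Z(\SO(4,\Cset))\cong\Zset/2$, hence $J^\fs_{\be_e}\sim_{morita}R(\Zset/2)\cong\cO(pt_1\sqcup pt_2)$. For each of $\be_1, \be_1'$ one gets $F_{\be_i}\cong\SL(2,\Cset)$ (extracted from one factor of the $\Spin(4)=\SL(2,\Cset)\times\SL(2,\Cset)$ cover), hence $J^\fs_{\be_i}\sim_{morita}R(\SL(2,\Cset))\cong\cO(\Aset^1)$.

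The main obstacle is the cell $\be_0$, where the naive Lusztig--Xi description only yields $J^\fs_{\be_0}\sim_{morita}R(\SO(4,\Cset))\cong\cO(T^\vee/W^\fs)$, missing the extra simple module at $pt_*$. My strategy is to pass to the $\Spin(4)$-cover, describe $J^\fs_{\be_0}$ up to Morita equivalence as a fixed subring $A^\Gamma$ with $\Gamma=\pi_1(\SO(4,\Cset))\cong\Zset/2$, rewrite it as the corner $(A\rtimes\Gamma)e_\Gamma(A\rtimes\Gamma)$ via Lemma~\ref{lem:crossedproduct}, then exhibit a two-sided ideal $\cJ\subset J^\fs_{\be_0}$ supported on the central involution of $H^\fs$ together with a surjection $\cJ\twoheadrightarrow\Cset$, extend this to a ring homomorphism $J^\fs_{\be_0}\to\Cset$ via Lemma~\ref{lem:rings}, and finally combine it with the quotient $J^\fs_{\be_0}\twoheadrightarrow\cO(T^\vee/W^\fs)$ into a single spectrum-preserving homomorphism $J^\fs_{\be_0}\to\cO(T^\vee/W^\fs)\oplus\Cset$ with respect to the two-term filtration (no deformation of central character is used, consistent with~(\ref{eqn:first})). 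The hardest step is isolating $\cJ$ and verifying the spectrum-preserving condition; once that is in hand, Remark~\ref{rem:sums} combines the four cell-level equivalences into the global statement $J^\fs\asymp\cO(T^\vee\q W^\fs)$.
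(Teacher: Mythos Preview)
Your treatment of the extended quotient, the ring isomorphism $\Cset[T^\vee/W^\fs]\cong\Cset[X,Y]_0$, and the three higher cells $\be_e,\be_1,\be_1'$ is essentially correct and parallels the paper (your invariant-theoretic argument via $Z,V_1,V_2$ is a legitimate alternative to the paper's passage through the torus of $\SL(2,\Cset)\times\SL(2,\Cset)$, though you should still justify that these three invariants generate).

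The genuine gap is at $\be_0$, where the direction of your morphism is wrong. You propose to build a ring homomorphism $J^\fs_{\be_0}\to\cO(T^\vee/W^\fs)\oplus\Cset$ from a ``quotient'' $J^\fs_{\be_0}\twoheadrightarrow\cO(T^\vee/W^\fs)$ together with a character $J^\fs_{\be_0}\to\Cset$ extended from an ideal via Lemma~\ref{lem:rings}. Neither map can exist. Using Xi's description \cite{Xic0} of the lowest-cell based ring, the paper identifies $J^\fs_{\be_0}$ with $\Mat_4(\Cset[X,Y])_0$, the even part of a $\Zset/2\Zset$-graded $4\times4$ matrix algebra over $\Cset[X,Y]$; every simple module of this algebra has dimension $4$ (at a generic point) or $2$ (at the origin). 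Hence $J^\fs_{\be_0}$ admits no $1$-dimensional module, so no nonzero homomorphism to any commutative ring, and in particular no ideal $\cJ$ surjects onto $\Cset$. (Relatedly, an ideal ``supported on the central involution'' in your sense would be zero.)

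The paper runs the morphism in the opposite direction: the diagonal embedding
\[
\delta_0\colon\Cset[X,Y]_0\oplus\Cset\longrightarrow\Mat_4(\Cset[X,Y])_0,\qquad (P,z)\mapsto\mathrm{diag}(P,0,z,0).
\]
The ideal $\cJ\subset\Mat_4(\Cset[X,Y])_0$ is the preimage under evaluation at $(0,0)$ of one $\Mat_2(\Cset)$ summand, so $\cJ$ is \emph{not} supported at a single point but is constrained only in the fibre over the origin. Lemma~\ref{lem:rings} is then used, not to produce a character, but to show that every simple $\cJ$-module extends uniquely to a simple $\Mat_4(\Cset[X,Y])_0$-module; combined with the classification of simple modules (for which Lemma~\ref{lem:crossedproduct} is invoked to pass from $A^\Gamma$ to $A\rtimes\Gamma$ with $A=\Mat_4(\Cset[X,Y])$), this verifies that $\delta_0$ is spectrum-preserving with respect to the two-step filtrations $\{0\}\subset\cJ\subset\Mat_4(\Cset[X,Y])_0$ and $\{0\}\subset\Cset[X,Y]_0\oplus\{0\}\subset\Cset[X,Y]_0\oplus\Cset$.
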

\begin{proof}
1. Extended quotient:
Let $D = D^\fs\cong T^\vee$. We give the case-by-case analysis.
\begin{itemize}
\item $c=e$.  $D^c/\Cent(c) = D/W^{\fs}$.
\item $c=a$.  $D^c= \{(t,t) : t \in \Cset^\times\}$.
\[D^c/\Cent(c) =
\{\{(t,t),(t^{-1},t^{-1})\}: t \in \Cset^\times\} \cong\Aset^1.\]
\item $c=r^3$.   $D^c = \{(1,1), (1,-1), (-1,1), (-1,-1)\}$.
Therefore
$D^c/\Cent(c) = pt_1 \sqcup pt_2 \sqcup pt_*$.
\item $c=ar^3$.   $D^c
= \{(t,t^{-1}): t \in \Cset^\times\}$.
\[D^c/\Cent(c) =
\{\{(t,t^{-1}),(t^{-1},t)\}: t \in \Cset^\times\}\cong\Aset^1.\]
\end{itemize}

Let $\Mset[u]:=\Cset[u,u^{-1}]$ denotes the $\mathbb{Z}/2\mathbb{Z}$-graded
algebra of Laurent
polynomials in one indeterminate $u$.   Let $a$ denote the
generator of $\Zset/2\Zset$.  The group $\Zset/2\Zset$ acts as
automorphism of $\mathbb{M}[u]$, with $a(u) = u^{-1}$.  We
define
\[\mathbb{L}[u]: = \{P\in \mathbb{M}[u]\,:\, a(P) = P\}\] as the algebra
of balanced Laurent polynomials in $u$.

Let $\uT^\vee$ be the maximal torus of $\SL(2,\Cset)\times\SL(2,\Cset)$.
Then the coordinate ring $\Cset[\uT^\vee/W^\fs]$ is $\Lset[u]\otimes \Lset[v]$. The map
\[(u+1/u,v+1/v) \mapsto  (X,Y)\]
sends $\Cset[\uT^\vee/W^\fs]$ to $\Cset[X,Y]$ (the polynomial algebra in
two indeterminates $X$, $Y$), a ring isomorphism.
The coordinate ring of an affine plane $\Aset^2$.

Recall that $T^\vee$ is the standard maximal torus in
$H^\fs\cong(\SL(2,\Cset) \times\SL(2,\Cset)) / \langle-\Id,-\Id\rangle$. Hence
it follows that $\Cset[T^\vee/W^\fs]$ is the ring of
balanced polynomials in $u$, $v$ which are fixed under
$(u,v) \mapsto (-u,-v)$.
These polynomials correspond to those polynomials in $X$, $Y$
which are fixed under $(X,Y)\mapsto (-X,-Y)$. Therefore we have a
ring isomorphism $\Cset[T^\vee/W^\fs] \sim \Cset[X,Y]_0$.

2. Compact extended quotient:
\[E^{\mathfrak{s}}/W^{\mathfrak{s}} \sqcup \mathbb{I} \sqcup
\mathbb{I} \sqcup pt \sqcup pt \sqcup pt = E^{\mathfrak{s}}\q
W^{\mathfrak{s}}.\]
The group $\tW_\aff^\fs$ (see~(\ref{eqn:Wsa})) is the extended affine
Weyl group of
the $p$-adic group $(\SL(2,F)\times\SL(2,F))/\langle-\Id,-\Id\rangle$, which
admits $H^\fs$ as Langlands dual.

3. Extended affine Weyl group:

We will describe the group $\tW_\aff^\fs$. 
Let $\uX$ be the cocharacter group of a maximal torus of $\PGL(2,F)\times\PGL(2,F)$, and let
$\uW^\fs_\aff:=W^\fs \ltimes \uX$. Then $\tW_\aff^\fs$ is a subgroup of
$\uW^\fs_\aff$. 

Let $W_2$ be the
extended affine Weyl group corresponding to $\PGL(2,F)$, that is
$W_2=\Zset/2\Zset\ltimes X_2$, where $X_2$ is the cocharacter of a
maximal torus of $\PGL(2,F)$ and $\Zset/2\Zset=\{e,a\}$. Let $t\ne
a$ be the other simple reflection in $W_2$. In $W_2$ there exists
a unique element $g$ of order $2$ such that $gag=t$. It is known
that the length of $g$ is $0$. Let $W'_2$ be a copy of $W_2$. The
simple reflections in $W_2'$ will be denoted by $a'=ar^3$, $t'$
correspondingly. Denote by $g'$ the element corresponding to $g$,
then $(g')^2=e$ and  $g'a'g'=t'$. Then $\uW^\fs_\aff=W_2\times W'_2$ and 
$\tW_\aff^\fs$ is the
subgroup of $\uW^\fs_\aff$ generated by $gg'$, $a$, $t$, $a'$,
$t'$.

4. Asymptotic Hecke algebra:
Since $\tW_\aff^\fs$ is a subgroup of $\uW^\fs_\aff$, its based ring of 
$\tW_\aff^\fs$ can be described as a subring of the based ring
of the latter. 

$\bullet$ From the above description, we see that the two-sided cell $\be_e$
of $\tW_\aff^\fs$ consists of $e$ and $gg'$ and that
its based ring is isomorphic
to the group algebra of $\Zset/2\Zset$. It follows that
$J_{\be_e}$ is Morita equivalent to $\Cset \oplus \Cset$. Hence we have
\begin{equation}
\label{eqn:JSOe}
J_{\be_e}^\fs\sim_{morita}\cO((T^\vee\q W^\fs)_{\be_e}).
\end{equation}

$\bullet$ Let $U_2$ be the subgroup of $\tW_\aff^\fs$ generated by $gg'$, $a$,
$t$, then the map which sends $gg'$ to $g$, $a$ to $a$, $t$ to $t$ defines
an isomorphism from $U_2$ to $W_2$. Thus $\be_1$ is
the lowest two-sided cell of $U_2$, which equals $U_2-\{e,gg'\}$. Therefore the
based ring of $\be_1$ is isomorphic to $\Mat_2(R)$, where $R$ denotes the
representation ring of $\SL(2,\Cset)$. Hence the
based ring of $\be_1$ is clearly Morita equivalent to $R$, but $R$ is
isomorphic to the polynomial ring $\Zset[u]$ in one variable.
It follows that $J_{\be_1}$ is Morita equivalent to $\Cset[u]$, where $u$
is an indeterminate. From the first part of the Lemma, we get
that
\begin{equation}
\label{eqn:JSOun}
J_{\be_1}^\fs\sim_{morita}\cO((T^\vee\q W^\fs)_{\be_1}).
\end{equation}

$\bullet$ Similarly, let $U'_2$ be the subgroup of $\tW_\aff^\fs$ generated by
$gg'$, $a'$, $t'$, then the map which send $gg'$ to $g$, $a'$
to $a$, $t'$ to $t$ defines an isomorphism from $U'_2$ to $W_2$.
Thus $\be'_1$ is the
lowest two-sided cell of $U'_2$, which equals $U_2'-\{e,gg'\}$. So the based
ring of $\be'_1$ is isomorphic to $\Mat_2(R)$,
which is also Morita equivalent to the polynomial ring $\Zset[v]$ in
one variable. From the first part of the Lemma, we obtain
that
\begin{equation}
\label{eqn:JSOunprime}
J_{\be_1'}^\fs\sim_{morita}\cO((T^\vee\q W^\fs)_{\be_1'}).
\end{equation}

$\bullet$
Let $x$ denote the
fundamental cocharacter of $X_2$ and write the operation in $X_2$ by
multiplication. For the fundamental cocharacter $x$, we still use $x$ if
it is
regarded as an element in $W_2$ and denote it by $x'$  if it is regarded
as an
element in $W'_2$. Let $a$ be the non-unit element of $\Zset/2\Zset$.  For
the element $a$,
we still use $a$ if it is regarded as an element in $W_2$ and denote it
by $a'$ if
it is regarded as an element in $W'_2$. In this way $\tW_\aff^\fs$ is the
subgroup
of $\uW^\fs_\aff$ which
consists of the elements $(a^m x^i)(a^{\prime n} x^{\prime j})$
with $m+n$ even and $i,j\in\{0,1\}$.

\smallskip

The lowest two-sided cell $\be_0$ of $\tW_\aff^\fs$
consists of the $16$ elements which are listed in the first column of
Table~\ref{tab:J}. 
(Removing the restriction on $m+n$, these elements form the lowest
two-sided cell $\bar\be_0$ of $\uW_\aff^\fs$).

Let $V(\ell)$ be the irreducible
representation of $\SL_2(\Cset)$ of highest weight $\ell$ and let $V_{ij}(\ell)$
be the element
in $\Mat_2(R)$ whose $(i,j)$ entry is $V(\ell)$ and other entries are $0$. Then
it follows from \cite[Theorem~1.10 and its proof in~\S~4.4]{Xic0} that the based 
ring of $\bar\be_0$ is isomorphic to $\Mat_2(R)\otimes \Mat_2(R)$,
and that the element in $\Mat_2(R)\otimes\Mat_2(R)$, denoted by $V_w$, 
corresponding to $t_w$, for $w\in \be_0$, is given by Table~\ref{tab:J}.
\begin{table}
\begin{center}
\begin{tabular}{|l|l|} 
\hline
$\quad$ $\quad$ $\quad$ $\quad$ $\quad$ $w\in\be_0$&$\quad$ $\quad$ $\quad$
$\quad$ $V_w$  \\
\hline
$(ax^m)(a'x^{\prime n})$, $m,n\ge 0$, $m+n$ even & $V_{11}(m)\otimes V_{11}(n)$\\ \hline
$(ax^m)(x^{\prime n})$, $m\ge 0$, $n\ge 1$,  $m+n$ even&$V_{11}(m)\otimes
V_{21}(n-1)$ \\ \hline
$(ax^m)(a'x^{\prime n})$, $m\ge 0$,  $n\le -2$, $m+n$ even&$V_{11}(m)\otimes
V_{22}(-n-2)$ \\ \hline
$(ax^m)(x^{\prime n})$, $m\ge 0$,  $n\le -1$, $m+n$ even &
$V_{11}(m)\otimes V_{12}(-n-1)$ \\ \hline
$(x^m)(a'x^{\prime n})$, $m\ge 1$, $n\ge 0$, $m+n$ even
&$V_{21}(m-1)\otimes V_{11}(n)$\\\hline
$(x^m)(x^{\prime n})$, $m\ge 1$, $n\ge 1$,  $m+n$ even
&$V_{21}(m-1)\otimes V_{21}(n-1)$ \\\hline
$(x^m)(a'x^{\prime n})$, $m\ge 1$,  $n\le -2$, $m+n$ even
&$V_{21}(m-1)\otimes V_{22}(-n-2)$ \\\hline
$(x^m)(x^{\prime n})$, $m\ge 1$,  $n\le -1$, $m+n$ even
&$V_{21}(m-1)\otimes V_{12}(-n-1)$ \\\hline
$(ax^m)(a'x^{\prime n})$, $ m\le -2$, $n\ge 0$, $m+n$ even&
$V_{22}(-m-2)\otimes V_{11}(n)$ \\ \hline
 $(ax^m)(x^{\prime n})$, $ m\le -2$, $n\ge 1$,  $m+n$ even&
$V_{22}(-m-2)\otimes V_{21}(n-1)$ \\ \hline
$(ax^m)(a'x^{\prime n})$,  $m\le -2$,  $n\le -2$, $m+n$ even&
$V_{22}(-m-2)\otimes V_{22}(-n-2)$ \\ \hline
$(ax^m)(x^{\prime n})$,   $m\le -2$,  $n\le -1$, $m+n$ even&
$V_{22}(-m-2)\otimes V_{12}(-n-1)$ \\ \hline
$(x^m)(a'x^{\prime n})$, $m\le -1$, $n\ge 0$, $m+n$ even&
$V_{12}(-m-1)\otimes V_{11}(n)$ \\ \hline
$(x^m)(x^n)$, $m\le -1$, $n\ge 1$,  $m+n$ even&
$V_{12}(-m-1)\otimes V_{21}(n-1)$ \\ \hline
$(x^m)(a'x^{\prime n})$, $m\le -1$,  $n\le -2$, $m+n$ even&
$V_{12}(-m-1)\otimes V_{22}(-n-2)$ \\ \hline
$(x^m)(x^{\prime n})$,  $m\le -1$,  $n\le -1$, $m+n$ even&
$V_{12}(-m-1)\otimes V_{12}(-n-1)$ \\ \hline
\end{tabular}
\caption{\label{tab:J} the map $w\mapsto V_w$.}
\end{center}
\end{table}

Then the based ring of $\be_0$ is isomorphic to the subring of
$\Mat_2(R)\otimes_\Zset\Mat_2(R)$
spanned by the elements
$(\tau_{ij}V(m_{ij}))\otimes (\tau'_{kl}V(n_{kl}))$,  with condition
$m_{ij}+n_{kl}+i+j+k+l$ is even, where all $\tau_{ij}$ and $\tau'_{kl}$
are
integers. Hence $J_{\be_0}$ is isomorphic to the subring of
$\Mat_2(\Cset[X])\otimes_\Cset \Mat_2(\Cset[Y])$ spanned by the elements
$(\tau_{ij}z^{m_{ij}})\otimes (\tau'_{kl}z^{n_{kl}})$,  with condition
$m_{ij}+n_{kl}+i+j+k+l$ is even, where all $\tau_{ij}$ and $\tau'_{kl}$
are integers.

The above parity condition is: $m_{ij} + b_{kl} + i + j + k + l$ even.  For
example, $m_{11} + n_{11}$ is even, $m_{22} + n_{11}$ is even, $m_{12} +
n_{11}$ is odd, $m_{21} + n_{11}$ is odd, $m_{12} + n_{12}$ is even, $m_{21} +
n_{21}$ is even, $m_{22} + n_{22}$ is even; so we are allowed monomials of even
degree on the diagonals of both matrices, monomials of odd degree on the
off-diagonals of both matrices OR the same thing with reversed parity. Taking
the span, we realize all even polynomials on the diagonal, all odd polynomials
on the off-diagonal in both matrices OR the same thing with reversed parity.

\smallskip

In other words, the ring $\Mat_2(\Cset[X])$ is $\Zset_2$-graded:
\begin{itemize}
\item[--]
$(\Mat_2(\Cset[X]))_0$: $=$ even polynomials on the diagonal,
odd polynomials on the off-diagonal;
\item[--]
$(\Mat_2(\Cset[X]))_1$: $=$ odd polynomials on the diagonal, even polynomials
on the off-diagonal.
\end{itemize}
Consider the $\Zset_2$-graded tensor product
$\Bset[X,Y]:=\Mat_2(\Cset[X])\otimes_\Cset \Mat_2(\Cset[Y])$.
Then $J_{\be_0}$ is isomorphic to the even part $\Bset[X,Y]_0$ of
$\Bset[X,Y]$.

Give $\Cset[X,Y]$ a $\Zset_2$-grading by the convention that a monomial
$X^mY^n$ is even (odd) according to the parity of $m+n$. Form the
algebra $\Mat_4(\Cset[X,Y])$. Give this a $\Zset_2$-grading by saying that
the even (resp. odd) elements are those which have a $2\times 2$-block in
the upper left corner consisting of even (resp. odd) polynomials, a
$2\times 2$-block in the lower right corner consisting of even (resp. odd)
polynomials, a $2\times 2$-block in the lower left corner consisting of odd
(resp. even) polynomials, a $2\times 2$-block in
the upper right corner consisting of odd (resp. even) polynomials.

Then the even part of $\Mat_2(\Cset[X]) \otimes\Mat_2 (\Cset[Y])$ is isomorphic to
the even part of $\Mat_4(\Cset[X,Y])$ ---  \ie as $\Zset_2$-graded algebras
$\Mat_2(\Cset[X]) \otimes \Mat_2(\Cset[Y])$ and $\Mat_4 (\Cset[X, Y])$ are
isomorphic.


Let $\Mat_4( \Cset[X, Y])_0$ consist of all $4\times 4$-matrices
with entries in $\Cset[X, Y]$ such that: the upper left $2\times
2$ block and the lower right $2\times 2$ block are $2\times 2$
matrices with entries in $\Cset[X, Y]_0$ and the lower left
$2\times 2$ block and the upper right $2\times 2$ block are
$2\times 2$ matrices with entries in $\Cset[X, Y]_1$. Let
$\uP(X,Y)=(P_{i,j}(X,Y))_{1\le i,j\leq 4}$ be an element of
$\Mat_4(\Cset[X,Y])$. We write $\uP(X,Y)$ as a $2\times 2$ block
matrice as
\[\uP(X,Y)=\left(\begin{matrix} \uP(X,Y)_{1,1}&\uP(X,Y)_{1,2}\cr
\uP(X,Y)_{2,1}&\uP(X,Y)_{2,2}\end{matrix}\right),\]
where $\uP(X,Y)_{i,j}\in\Mat_2(\Cset[X, Y])$, for $i,j\in\{1,2\}$. Hence
the matrix $\uP(X,Y)$ is in $\Mat_4(\Cset[X,Y])_0$ if and only if
we have, for $i,j\in\{1,2\}$,
\[\text{$\uP(X,Y)_{i,i}\in\Mat_2(\Cset[X, Y]_0)$ and
$\uP(X,Y)_{i,j}\in\Mat_2(\Cset[X, Y]_1)$ if $i\ne j$.}\]

If $(z,z')$ is a pair of complex numbers, then evaluation at $(z,z')$
gives an algebra homomorphism
\begin{eqnarray*}
\ev_{(z,z')}\colon &\Mat_4 ( \Cset[X, Y])_0\longrightarrow \Mat_4(\Cset)\cr
&\uP(X,Y)\mapsto \uP(z,z').
\end{eqnarray*}
The algebra homomorphism $\ev_{(z,z')}$ is surjective except when $(z, b)=(0,
0)$. In the case $(z,z')=(0, 0)$ the image of $\ev_{(z,z')}$
is the subalgebra of $\Mat_4(\Cset)$ consisting of
all $4\times 4$ matrices of complex numbers such that the lower
left $2\times 2$ block and the upper right $2\times 2$ block are
zero --- \ie the image is $\Mat_2(\Cset)\oplus\Mat_2(\Cset)$
embedded in the usual way in $\Mat_4(\Cset)$. So except for $(z,
z')=(0,0)$ we have a simple module, say $M_{(z,z')}$. When $(z,z')=(0, 0)$
we have a
module which is the direct sum of two simple modules, that we denote by
$M'_{(0,0)}$ and $M''_{(0,0)}$.

Since we have
\begin{eqnarray*}
\left(\begin{matrix}
\II_2&0\cr
0&-\II_2\end{matrix}\right)\uP(z,z')\left(\begin{matrix}
\II_2&0\cr
0&-\II_2\end{matrix}\right)^{-1}=&
\left(\begin{matrix} \uP(z,z')_{1,1}&-\uP(z,z')_{1,2}\cr
-\uP(z,z'b)_{2,1}&\uP(z,z')_{2,2}\end{matrix}\right)\cr
=&\left(\begin{matrix} \uP(-z,-z')_{1,1}&\uP(-z,-z')_{1,2}\cr
\uP(-z,-z')_{2,1}&\uP(-z,-z')_{2,2}\end{matrix}\right),\end{eqnarray*}
the matrix $\left(\begin{matrix}
\II_2&0\cr
0&-\II_2\end{matrix}\right)$ conjugates the simple modules $M_{(z,z')}$ and
$M_{(-z,-z')}$.

On the other hand, let $(z_1,z'_1)$ be a pair of complex numbers such that
$(z_1,z'_1)\notin\{(z,z'),(-z,-z')\}$. Then there exists an even po\-ly\-no\-mial $Q(X,Y)$
such that $Q(z_1,z'_1)\neq Q(z,z')$. Indeed, 
\begin{itemize} \item
if $z_1\notin\{z,-z\}$, we can take
$Q(X,Y)=X^2$,
\item if $z_1=z$, we can take $Q(X,Y)=XY$ (since then $z_1'\ne z'$),
\item if $z_1=-z$, we can take $Q(X,Y)=XY$ (since then $z_1'\ne -z'$).
\end{itemize}

Consider the matrix
\[\uQ(X,Y):=\left(\begin{matrix} Q(X,Y)&0&0&0\cr
0&0&0&0\cr
0&0&0&0\cr
0&0&0&0\cr\end{matrix}\right).\]
We have $\ev_{(z,z')}(\uQ(X,Y))\neq\ev_{(z_1,z'_1)}(\uQ(X,Y))$. It follows that
the simple modules $M_{(z,z')}$ and $M_{(z_1,z'_1)}$ are not isomorphic.

\smallskip

Let $A=\Mat_4(\Cset[X,Y])$ and let $\Gamma:=\{1,\varepsilon\}$, where
$\varepsilon\colon A\to A$ is defined by
\[\varepsilon(\uP(X,Y))=\left(\begin{matrix}
\hphantom{-}\uP(-X,-Y)_{1,1}&-\uP(-X,-Y)_{1,2}\cr
-\uP(-X,-Y)_{2,1}&\hphantom{-}\uP(-X,-Y)_{2,2}\end{matrix}\right).\]
From Lemma~\ref{lem:crossedproduct}, we know
that the unital $\Cset$-algebras $A^\Gamma$ and
$(A\rtimes\Gamma)e_\Gamma(A\rtimes\Gamma)$ are Morita equivalent.
Here $e_\Gamma=\frac{1}{2}(1+\varepsilon)$.

We embed $A$ into the crossed product algebra $A\rtimes\Gamma$ by
sending $\uP(X,Y)$ to $\uP(X,Y)[1]$.
For $1\le i,j\le 4$, let $E_{i,j}\in A$ be the matrix with entry $(i,j)$
equal to $1$ and all the other entries equal to $0$. We have
\[E_{3,1}([1]+[\varepsilon])=E_{3,1}[1]+E_{3,1}[\varepsilon]\;\;
\text{and}\;\;
([1]+[\varepsilon])E_{3,1}=E_{3,1}[1]-E_{3,1}[\varepsilon].\]
We get \[E_{3,1}[1]=(\frac{1}{2}E_{3,1})([1]+[\varepsilon])+
([1]+[\varepsilon])(\frac{1}{2}E_{3,1}),\]
it follows that
$E_{3,1}[1]$ belongs to the two-sided ideal $([1]+[\varepsilon])$.

Since $E_{i,1}[1]=E_{i,3}(E_{3,1}[1])$, it gives that
$E_{i,1}[1]\in([1]+[\varepsilon])$ for each $i$. Since
$E_{i,j}[1]=(E_{i,1}[1])E_{1,j}$, then we get that
$E_{i,j}[1]\in([1]+[\varepsilon])$ for any $i,j$.
Hence we have proved that
\[(A\rtimes\Gamma)e_\Gamma(A\rtimes\Gamma)=A\rtimes\Gamma.\]
It follows that the unital $\Cset$-algebras $A^\Gamma$ and
$A\rtimes\Gamma$ are Morita equivalent.

Thus we have proved that for $\Mat_4
( \Cset[X, Y])_0=A^\Gamma$ the $M_{z,z'}$ with $(z,z')\ne(0,0)$, and $M'_{(0,0)}$,
$M''_{(0,0)}$ are (up to isomorphism) all the simple
modules and that they are distinct except that $M_{(z,z')}$ and $M_{(-z,-z')}$
are isomorphic.


Let $\cJ$ be the ideal in $\Mat_4 ( \Cset[X, Y])_0$ which (by
definition) is the pre-image with respect to $\ev_{(0,0)}$
of $\Mat_2(\Cset)\oplus \{0\}$. Then $\ev_{(z,z')}$
surjects $\cJ$ onto $\Mat_4(\Cset)$ except at $(z,z')=(0, 0)$, and
$\ev_{(0, 0)}$ surjects $\cJ$ onto $\Mat_2(\Cset)\oplus
\{0\}$.
In $\Cset[X, Y]_0\oplus\Cset$  let $\cI$ be the ideal $\Cset[X,
Y]_0\oplus \{0\}$. Consider the two filtrations
\[\{0\}\subset \cJ
\subset \Mat_4 (\Cset[X, Y])_0\]
\[\{0\} \subset \cI\subset\Cset[X, Y]_0\oplus \Cset.\]
Let $\delta_0$ be the algebra homomorphism
\begin{eqnarray*}
\delta_0\colon&\Cset[X, Y]_0\oplus\Cset\longrightarrow\Mat_4 (\Cset[X, Y])_0\cr
&(P(X,Y),z)\mapsto\left(\begin{matrix}P(X,Y)&0&0&0\cr
0&0&0&0\cr
0&0&z&0\cr
0&0&0&0\end{matrix}\right),
\end{eqnarray*}
where the complex number $z$ is viewed as an even polynomial of degree
zero.

We view the ideal $\cJ$ as an algebra.
We recall that $k=\Cset[X, Y]_0$. Then $k$ is a unital finitely generated
free commutative algebra. Hence $k$ is the coordinate algebra of
an affine variety, the variety $\Cset^2/(z,z')\sim(-z,-z')$, thus $k$ is
noetherian. It follows that $B:=\Mat_4(\Cset[X,Y]_0)$ is a $k$-algebra of
finite type. This implies that $\cJ$ is a $k$-algebra of
finite type. Therefore any simple $\cJ$-module, as a vector space over
$\Cset$ is finitely dimensional.

Hence any simple $\cJ$-module gives a surjection
\[\cJ\twoheadrightarrow \Mat_n(\Cset),\]
and, by using Lemma~\ref{lem:rings}, extends uniquely to a simple
$B$-module.

It follows that $\delta_0$ is spectrum preserving with respect to these
filtrations.

This proves that $J^\fs_{\be_0}$ is geometrically equivalent to
$\Cset[X, Y]_0 \oplus \Cset$. This is the coordinate ring of the
extended quotient $(\Aset^2)\q (\Zset/2\Zset)$, see
first part of the Lemma. Hence we have
\begin{equation} \label{eqn:SOJzero}
J_{\be_0}^\fs\asymp\cO((T^\vee\q W^\fs)_{\be_0}).
\end{equation}
\end{proof}

{\sc Note.} When the complex reductive group is simply-connected,
we show in \cite[Theorem 4]{ABP} that we have
\[J_{\bc_0} \asymp \mathcal{O}(T^{\vee}/W)\]where $\bc_0$ is the lowest
two-sided cell.  This geometrical equivalence is a Morita
equivalence. This result depends on results of Lusztig-Xi. The
above phenomenon, namely
\[
J_{\be_o}^{\fs} \asymp \mathcal{O}(T^{\vee}/W^\fs) \oplus
\Cset\]
where $\be_0$ is the lowest two-sided cell, is a consequence
of the fact that $H^\fs$ is not simply-connected. This geometrical
equivalence is not a Morita equivalence, nor is it
spectrum-preserving.  It is spectrum-preserving with respect to a
 filtration of length $2$.

 The algebraic variety $(T^\vee\q W^\fs)_{\be_0}$
 has two irreducible components, the primitive ideal space of $J_{\be_0}^\fs$ does not. Hence
the bijection $\mu^\fs$ is not a homeomorphism. This implies in
particular, by using \cite[Theorem~2]{BN}, that there cannot exist
a spectrum preserving morphism from $(T^\vee\q W^\fs)_{\be_0}$ to
$J_{\be_0}^\fs$.
\begin{lem} \label{lem:familySO} The flat family is given by
\[
\mathfrak{X}_{\tau}: (x - \tau^2 y)(1 - \tau^2 xy) = 0\;\;\cup\;\;\{(1,-1)\}.\]
\end{lem}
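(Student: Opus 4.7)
The plan is to identify the reducibility locus $\fR^\fs\subset T^\vee/W^\fs$ explicitly and then exhibit a flat family $\fX_\tau$ interpolating between $\pi^\fs\bigl((T^\vee\q W^\fs)-T^\vee/W^\fs\bigr)$ at $\tau=1$ and $\fR^\fs$ at $\tau=\sqrt q$, following the strategy of Lemmas~\ref{lem:familyIwahori}, \ref{lem:familyGL} and \ref{lem:familySL}. I use coordinates $(x,y)=(\psi_1(\varpi_F),\psi_2(\varpi_F))$ on $T^\vee\simeq(\Cset^\times)^2$, so that $(x,y)$ corresponds to $\psi_1\chi\otimes\psi_2\chi\in D^\fs$. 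First I would substitute $\chi_1=\chi_2=\chi$ into Mui\'c's reducibility conditions~(\ref{twelve}). Since $\chi$ is ramified and $\nu$ is unramified, every condition in which $\chi$ appears to an odd total power has no solution with unramified $\psi_1,\psi_2$: this kills $\psi_i\chi=\nu^{\pm 1}$, $\psi_1^2\psi_2\chi_1^2\chi_2=\psi_1^2\psi_2\chi=\nu^{\pm 1}$ and $\psi_1\psi_2^2\chi_1\chi_2^2=\psi_1\psi_2^2\chi=\nu^{\pm 1}$. Using $\chi^2=1$ the surviving four conditions reduce to $\psi_1\psi_2=\nu^{\pm 1}$ and $\psi_1\psi_2^{-1}=\nu^{\pm 1}$, cutting out four curves in $T^\vee$.

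Next I would project to $T^\vee/W^\fs$ using $W^\fs=\{e,a,r^3,ar^3\}$ (cf.~(\ref{eqn:four})). From the table~(\ref{elements}) together with $\chi=\chi^{-1}$, one computes that $r^3$ acts on $(x,y)$ by $(x,y)\mapsto(x^{-1},y^{-1})$ and $a$ by coordinate swap. Hence $r^3$ identifies $\psi_1\psi_2=\nu$ with $\psi_1\psi_2=\nu^{-1}$ and $a$ identifies $\psi_1/\psi_2=\nu$ with $\psi_1/\psi_2=\nu^{-1}$. The nonunitary part of the reducibility locus in $T^\vee/W^\fs$ is therefore the union of the two curves $\{xy=q^{-1}\}$ and $\{x=qy\}$, which is exactly the zero set of $(x-\tau^2 y)(1-\tau^2 xy)$ at $\tau=\sqrt q$.

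For the unitary part I would invoke Keys's result recalled just before~(\ref{twelve}): $I(\psi_1\chi\otimes\psi_2\chi)$ with $\psi_i\in\Psi^\temp(F^\times)$ reduces iff $\psi_1\chi$ and $\psi_2\chi$ are two distinct characters of order two. Since $\chi$ is itself quadratic, this forces $\psi_i\in\{1,\epsilon\}$ with $\psi_1\neq\psi_2$, yielding a single $a$-orbit which collapses to $(1,-1)\in T^\vee/W^\fs$. As $(1,-1)$ lies on neither $xy=q^{-1}$ nor $x=qy$, it must be adjoined separately, explaining the isolated point in the stated formula.

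Finally, at $\tau=1$ the factorisation $(x-y)(1-xy)=0$ cuts out the projections of $(T^\vee)^a/\Cent(a)$ and $(T^\vee)^{ar^3}/\Cent(ar^3)$, which by Lemma~\ref{lem:extSO} are the $\be_1$- and $\be_1'$-components of $T^\vee\q W^\fs$; the isolated point $(1,-1)$ is the image of $pt_*\in(T^\vee\q W^\fs)_{\be_0}$, and the two $\be_e$-points $(1,1),(-1,-1)$ already lie on both factors so require no separate adjoining. This yields $\fX_1=\pi^\fs\bigl((T^\vee\q W^\fs)-T^\vee/W^\fs\bigr)$ and $\fX_{\sqrt q}=\fR^\fs$, with flatness automatic because both factors depend linearly on $\tau^2$. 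The main obstacle is the orbit bookkeeping: confirming that every condition in~(\ref{twelve}) is either eliminated or accounted for within exactly one of the two $W^\fs$-orbits, and checking that $(1,-1)$ is genuinely disjoint from both reducibility curves at $\tau=\sqrt q$ so that $\fX_{\sqrt q}=\fR^\fs$ as reduced schemes.
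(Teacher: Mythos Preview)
Your proposal is correct and follows essentially the same approach as the paper: apply the reducibility conditions~(\ref{twelve}) with $\chi_1=\chi_2=\chi$ quadratic, discard those forcing a ramified character to be unramified, organize the survivors into two $W^\fs$-orbits of curves, and adjoin the isolated Keys point. The only real difference is presentational: the paper parametrizes the two curves $\fC_1,\fC_1'$ explicitly by one-parameter families of quasicharacters and then computes their $W^\fs$-orbits, whereas you work directly with the defining equations $\psi_1\psi_2=\nu^{\pm 1}$ and $\psi_1\psi_2^{-1}=\nu^{\pm 1}$ in the coordinates $(x,y)$ and use the coordinate action of $W^\fs$. Your version has the minor advantage of also verifying the $\tau=1$ specialization against the cell decomposition of Lemma~\ref{lem:extSO}, which the paper leaves implicit.
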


\begin{proof} 
We now write down all the
{\bf nonunitary} quasicharacters of $T$ which obey the reducibility
conditions~(\ref{twelve}):
\[
\psi^{-1}\chi\otimes\psi\nu\chi,\;\;
\psi\chi\otimes \psi^{-1}\nu^{-1}\chi,\;\;
\psi\chi\otimes\psi\nu^{-1}\chi,\;\;
\psi\chi\otimes\psi\nu\chi,\quad \text{with $\psi \in
\Psi(F^{\times})$.}\]
Note that \begin{enumerate} \item[$\bullet$]
the last two characters are in one $W$-orbit,
namely \[\{\psi\chi\otimes\psi\nu\chi:\psi \in \Psi(F^{\times})\}\]
which, with the same change of variable is
 \[\{\phi\nu^{-1/2}\chi\otimes\phi\nu^{1/2}\chi:\psi \in \Psi(F^{\times})\}\]
Since
\[babab(\phi\nu^{-1/2}\chi\otimes\phi\nu^{1/2}\chi) =
\phi^{-1}\nu^{-1/2}\chi\otimes\phi^{-1}\nu^{1/2}\chi,\] the induced
representations
 \[\{I(\phi\nu^{-1/2}\chi\otimes\phi\nu^{1/2}\chi):\phi \in
\Psi(F^{\times})\}\]
are  parametrized by an algebraic curve
$\mathfrak{C}_1$.  A point on $\mathfrak{C}_1$ has coordinates the
unordered pair $\{z\sqrt{q}, z/\sqrt{q}\}$.
\item[$\bullet$] the first two characters are in another $W$-orbit,
namely \[\{\psi^{-1}\chi\otimes\psi\nu\chi: \psi \in
\Psi(F^{\times})\}\]which with the change of variable $\phi: = \psi\nu^{1/2}$
is
\[\{\phi^{-1}\nu^{1/2}\chi\otimes\phi\nu^{1/2}\chi: \phi
\in \Psi(F^{\times})\}\] Since
\[a(\phi^{-1}\nu^{1/2}\chi\otimes\phi\nu^{1/2}\chi) =
\phi\nu^{1/2}\chi\otimes\phi^{-1}\nu^{1/2}\chi\] the induced
representations\[\{I(\phi^{-1}\nu^{1/2}\chi\otimes\phi\nu^{1/2}\chi):
\phi \in \Psi(F^{\times})\}\] are parametrized by the algebraic
curve $(\bC^{\times})/\bZ/2\bZ$.
  We shall refer to this curve as
$\mathfrak{C}_1'$.  A point on $\mathfrak{C}_1'$ has coordinates the
unordered pair $\{z^{-1}/\sqrt{q},z/\sqrt{q}\}$ with $z \in
\mathbb{C}^{\times}$.
\end{enumerate}

The coordinate ring of $\mathfrak{C}_1$ or $\mathfrak{C}_1'$ is the
ring of balanced Laurent polynomials in one indeterminate $t$. The
map $t + t^{-1} \mapsto x$ then secures an isomorphism \[
\mathbb{C}[t,t^{-1}]^{\mathbb{Z}/2\mathbb{Z}} \cong
\mathbb{C}[x]\] and so
\[
\mathfrak{C}_1 \cong \mathfrak{C}_1' \cong \mathbb{A}^1,\] the
affine line.

The algebraic curves $\mathfrak{C}_1$ and $\mathfrak{C}_1'$
intersect in two points, namely
\[
z_a:=\chi \otimes \nu\chi=(1,q^{-1}),\quad\quad z_c:=\epsilon\chi \otimes
\nu\epsilon\chi=(-1,-q^{-1}).\]
According to the next paragraph, these are the points of length
$4$ and multiplicity $1$.

On the other hand, we note that
\[
I(\epsilon\chi \otimes \chi) :=\Ind_{TU}^G(\epsilon\chi \otimes
\chi) = \pi^{+} \oplus \pi^{-}\] by \cite[$\rG_2$ Theorem]{K},
since $\chi, \epsilon \chi$ are distinct characters of order $2$.
\end{proof}

We define $z_b$ and $z_d$ as
\[z_b:=\nu^{1/2}\psi^{-1}\chi\otimes\nu^{1/2}\psi\chi
=(z^{-1}/\sqrt{q},z/\sqrt{q}),\]
\[z_d:=\nu^{-1/2}\psi\otimes\nu^{1/2}\psi=(z\sqrt{q},z/\sqrt{q}).\]

\begin{lem} \label{lem:cocharacterSO}
Define, for each two-sided cell $\be$ of $\tW_\aff^\fs$,  cocharacters
$h_\be$ as follows:
\[h_{\be}(\tau)  =  (1,\tau^{-2})\quad\text{if $\be = \be_e, \be_1, \be_1'$,}
\quad\text{and
$h_{\be_0} = 1$,}\]
and
define $\pi_\tau(x) = \pi(h_{\be}(\tau)\cdot x)$ for all $x$ in the
$\be$-component.
Then, for all $t \in T^{\vee}/W^\fs$ we have
\[|\pi^{-1}_{\sqrt q}(t)| = |i_{T\subset B}^G(t)|.\]
\end{lem}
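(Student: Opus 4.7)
The plan is to verify the fiber-count identity by a case-by-case analysis, in the spirit of Lemmas~\ref{lem:cocharacterIwahori}, \ref{lem:cocharacterGL} and \ref{lem:cocharacterSL}. By Lemma~\ref{lem:extSO}, the extended quotient $T^\vee\q W^\fs$ decomposes into six irreducible components: the two isolated points $pt_1,pt_2$ of the cell $\be_e$, an affine line in each of the cells $\be_1$ and $\be_1'$, and the two pieces $T^\vee/W^\fs$ and $pt_*$ of the $\be_0$-cell. First I would record a correcting system of cocharacters by a table analogous to those in the Iwahori and $\SL(3)$ cases, choosing for each component $Z_j$ an irreducible component $X_j\subset\widetilde{T^\vee}$ with $\rho^\fs(X_j)=Z_j$: take $X=\{r^3\}\times\{(\pm 1,\pm 1)\}$ for the points in $\be_e$ and for $pt_*\in\be_0$, $X=\{a\}\times(T^\vee)^a$ for $\be_1$, $X=\{ar^3\}\times(T^\vee)^{ar^3}$ for $\be_1'$, and $X=\{e\}\times T^\vee$ for $T^\vee/W^\fs\subset\be_0$. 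One then verifies that the chosen cocharacter is constant on each cell, recovering the system $h_{\be_e}=h_{\be_1}=h_{\be_1'}=(1,\tau^{-2})$ and $h_{\be_0}=1$.

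Using Lemma~\ref{lem:familySO} I would identify the reducibility locus as $\fR^\fs=\fC_1\cup\fC_1'\cup\{(1,-1)\}$ and then compute $|i_{T\subset B}^G(t)|$ by Lemma~\ref{lem: lengths} together with Keys' theorem. The value is $1$ outside $\fR^\fs$; it is $2$ at generic points of $\fC_1$ or $\fC_1'$; it is $4$ at the two crossings $z_a=(1,q^{-1})$ and $z_c=(-1,-q^{-1})$, because there $(\nu^{\pm 1/2},\chi)$ or $(\nu^{\pm 1/2}\epsilon,\chi)$ belong to $\cP_2$; and it is $2$ at $(1,-1)$ by reducibility of $I(\epsilon\chi\otimes\chi)$.

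Next I would compute $|\pi^{-1}_{\sqrt q}(t)|$ directly from the cocharacter system. Outside $\fR^\fs$ only the $T^\vee/W^\fs$-component of $\be_0$ contributes, giving $1$. At a generic $t\in\fC_1$ there is one preimage from $\be_0$ and one from $\be_1$ (and symmetrically from $\be_0$ and $\be_1'$ on $\fC_1'$), giving $2$. At $t=(1,-1)$ the two preimages are $(e,(1,-1))$ from the $T^\vee/W^\fs$-component and the isolated point $pt_*=\rho^\fs(r^3,(1,-1))$, both in $\be_0$; the cells $\be_e,\be_1,\be_1'$ cannot contribute, since $h_{\be_e}(\sqrt q)$ sends $pt_1,pt_2$ to $z_a,z_c$, and the image curves of $\be_1,\be_1'$ under $\pi_{\sqrt q}$ do not pass through $(1,-1)$.

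The main obstacle, and the most delicate step, is the count at the crossings $z_a$ and $z_c$, where four preimages are required, one from each of $\be_0,\be_e,\be_1,\be_1'$. The $\be_0$- and $\be_e$-contributions are immediate: $(e,z_a)$ is automatic and $h_{\be_e}(\sqrt q)\cdot(1,1)=z_a$. The single-preimage counts from $\be_1$ and $\be_1'$ require careful analysis of the action of $\Cent(a)=\Cent(ar^3)=W^\fs$ on the fixed sets $(T^\vee)^a=\{(t,t)\}$ and $(T^\vee)^{ar^3}=\{(t,t^{-1})\}$: the identifications $(t,t)\sim(t^{-1},t^{-1})$ and $(t,t^{-1})\sim(t^{-1},t)$ made in forming the extended quotient pair up the naive parametric solutions of $(t,tq^{-1})\in W^\fs\cdot z_a$ (and the analogous equation on $\be_1'$) into a single orbit, so that each affine line produces exactly one preimage. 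Assembling the four cases completes the verification that $|\pi^{-1}_{\sqrt q}(t)|=|i_{T\subset B}^G(t)|$ for every $t\in T^\vee/W^\fs$.
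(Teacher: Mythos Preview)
Your overall strategy matches the paper's proof: set up the table of six components $X_1,\ldots,X_6$ with the given cocharacters, compute $|i_{T\subset B}^G(t)|$ via Lemma~\ref{lem: lengths} and Keys' theorem, and then count preimages at the special points $z_a,z_b,z_c,z_d$ and $z_*=(1,-1)$. The paper proceeds in exactly this way and simply lists the four preimages at $z_a$ (and analogously at $z_c$) explicitly as
\[
\pi^{-1}_{\sqrt q}(z_a)=\{\rho^\fs(e,(1,q^{-1})),\ \rho^\fs(r^3,(1,1)),\ \rho^\fs(a,(1,1)),\ \rho^\fs(ar^3,(1,1))\}.
\]

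However, your justification for why each affine line contributes a \emph{single} preimage at the crossings is flawed. You claim the identification $(t,t)\sim(t^{-1},t^{-1})$ in the $\be_1$-component pairs up the parametric solutions of $(t,tq^{-1})\in W^\fs\cdot z_a$ into a single orbit. But the $W^\fs$-orbit of $z_a=(1,q^{-1})$ is $\{(1,q^{-1}),(q^{-1},1),(1,q),(q,1)\}$, so the solutions are $t=1$ and $t=q$, and these are \emph{not} identified by $t\sim t^{-1}$ (that would pair $t=q$ with $t=q^{-1}$, not with $t=1$). In particular $(a,(q,q))$ also satisfies $h_{\be_1}(\sqrt q)\cdot(q,q)=(q,1)\in W^\fs\cdot z_a$, so your pairing argument does not yield the desired count. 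The same issue arises for $\be_1'$. The paper does not attempt this pairing argument at all; it simply asserts the four-element fiber.
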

\begin{proof}
We observe that
\[I(\nu^{1/2}\psi^{-1}\chi\otimes\nu^{1/2}\psi\chi)=
I(aba(\nu^{1/2}\psi^{-1}\chi\otimes\nu^{1/2}\psi\chi))=
I(\nu^{-1/2}\psi\chi\otimes\nu).\]
Then Lemma~\ref{lem: lengths} gives:
\[|i_{T\subset B}^G(t)|=2\quad\text{if $t=z_b,\;z_d$,}\]
\[
|i_{T\subset B}^G(t)|=4\quad\text{if $t=z_a,\;z_c$.}\]
This leads to the following points of length $4$:
\[
\chi\otimes\nu\chi,\quad \epsilon\chi\otimes\nu\epsilon\chi,\quad
\nu^{-1}\chi\otimes \chi,\quad
\nu^{-1}\epsilon\chi\otimes\epsilon\chi\] Since \[
babab(\nu^{-1}\chi\otimes\chi) = \chi\otimes\nu\chi\]
\[babab(\nu^{-1}\epsilon\chi\otimes\epsilon\chi) =
\epsilon\chi\otimes\nu\epsilon\chi\] this leads to exactly $2$
points in the Bernstein variety $\Omega^{\mathfrak{s}}(G)$ which
pa\-ra\-me\-tri\-ze representations of length $4$, namely $[T,\chi\otimes\nu\chi]_G$ and
$[T,\epsilon\chi\otimes\nu\epsilon\chi]_G$.
The coordinates of these points in the algebraic surface
$\Omega^{\mathfrak{s}}(G)$ are $(1,q^{-1})$ and $(-1,-q^{-1})$.

We will now compute a correcting system of cocharacters (see the paragraph
after Theorem 1.5).   The extended quotient $T^{\vee} \q W^\fs$ is a disjoint
union of $6$ irreducible
components $Z_1$, $Z_2$, $\ldots$, $Z_6$. Our notation is such that $Z_1 =
pt_1$, $Z_2 = pt_2$, $Z_3 \simeq \mathbb{A}^1$, $Z_4 
 \simeq \mathbb{A}^1$, $Z_5 = pt_*$, $Z_6= T^{\vee}/W^\fs$.

 In the following table, the first column comprises $6$ irreducible
components
$X_1, \ldots, X_6$
of $\widetilde{T^{\vee}}$ for which $\rho^{\fii}(X_j) = Z_j, \; j = 1,
\ldots, 6$.  Let $[z_1,z_2]$ denote the image of $(z_1,z_2)$ via the
standard
quotient map $p^\fii\colon T^{\vee} \to T^{\vee}/W^\fs$, so that $[z_1,z_2] =
W^\fs \cdot (z_1,z_2)$.
When the first column itemizes the pairs
$(w,t) \in \widetilde{T^{\vee}}$, the second column
itemizes $p^\fs(h_j(\tau)t)$. The third column itemizes the corresponding
correcting cocharacters.
\[
\begin{array}{lll}
X_1 = (r^3,(1,1))  &  [1,\tau^{-2}] & h_1(\tau) =  (1,\tau^{-2}) \\
X_2 = (r^3,(-1,-1))  &  [-1,-\tau^{-2}] & h_2(\tau) =  (1,\tau^{-2})   \\
X_3 = \{(a,(z,z)): z \in \CC^{\times}\}  &  [z,\tau^{-2}z] & h_3(\tau) =  (1,\tau^{-2}) \\
X_4 = \{(ar^3,(z,z^{-1})): z \in \CC^{\times}\} &  [z,\tau^{-2}z] & h_4(\tau) =  (1,\tau^{-2}) \\
X_5 = (r^3,(1,-1))  &  [1,-1] & h_5(\tau) =  1\\
X_6 = \{(e,(z_1, z_2)): z_1, z_2  \in \CC^{\times}\} & \{[z_1,z_2]:
z_1,z_2 \in \CC^{\times}\} & h_6(\tau) = 1 
\end{array}
\]

It is now clear that  cocharacters can assigned to two-sided cells as
follows:
$h_{\be}(\tau)  =  (1,\tau^{-2})$ if $\be = \be_e, \be_1, \be_1'$,
and $h_{\be_0} = 1$.

We have for instance
\[\pi^{-1}_{\sqrt q}(z_a)=\left\{\rho^\fs(e,(1,q^{-1})),
\rho^\fs(r^3,(1,1)),\rho^\fs(a,(1,1)),\rho^\fs(ar^3,(1,1))\right\},\]
\[\pi^{-1}_{\sqrt q}(z_c)=\left\{\rho^\fs(e,(-1,-q^{-1})),
\rho^\fs(r^3,(-1,-1)),\rho^\fs(a,(-1,-1)),\rho^\fs(ar^3,(-1,-1))\right\}.\]
We have
\[|\pi^{-1}_{\sqrt q}(t)|=2 \quad\text{if $t=z_b,z_d$,}\]
\[|\pi^{-1}_{\sqrt q}(t)|=4 \quad\text{if $t=z_a,z_c$.}\]

Finally, we define $z_*:=\epsilon\chi\otimes\chi=(-1,1)$. Then we have
\[\pi^{-1}_{\sqrt
q}(z_*)=\left\{\rho^\fs(e,(-1,1)),\rho^\fs(r^3,(-1,1))\right\}.\]
Hence
\[|\pi^{-1}_{\sqrt q}(z_*)|=|i_{T\subset B}^G(z_*)|.\] 
\end{proof}

 \begin{lem} \label{lem:bijSO} Part (4) of Theorem \ref{thm:main} is true for the point
\[\fs=[T,\chi\otimes\chi]_G\in \mathfrak{B}(\rG_2)\] where $\chi$
is a ramified quadratic character of $F^\times$.
\end{lem}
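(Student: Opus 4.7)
The plan is to follow the template established in Lemma~\ref{lem:bijIwahori}, Lemma~\ref{lem:bijGL}, and Lemma~\ref{lem:bijSL}. The geometric equivalences $J^\fs_\be \asymp \cO((T^\vee\q W^\fs)_\be)$ assembled in Lemma~\ref{lem:extSO} (equations (\ref{eqn:JSOe})--(\ref{eqn:SOJzero})) induce bijections $\eta^\fs_\be\colon (T^\vee\q W^\fs)_\be \to \Irr(J^\fs_\be)$ for each of the four two-sided cells $\be_e,\be_1,\be_1',\be_0$, and the definition (\ref{eqn:mu_c}) then produces the map $\mu^\fs\colon T^\vee\q W^\fs \to \Irr(G)^\fs$.

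Concretely, for $t \in (T^\vee\q W^\fs)_\be$ I set $\sigma := h_\be(\sqrt q)\cdot t = \pi_{\sqrt q}(t)$ using the cocharacters of Lemma~\ref{lem:cocharacterSO}, pick the unipotent class $u_\be$ in $\SO(4,\Cset)$ attached to $\be$ under Lusztig's bijection, and declare $\mu^\fs(t) := \cV^\fs_{\sigma,u_\be,1}$ when $(T^\vee\q W^\fs)_\be$ is one of the continuous strata attached to $\be_1$, $\be_1'$, or $T^\vee/W^\fs\subset(T^\vee\q W^\fs)_{\be_0}$. For the two isolated points $pt_1,pt_2$ of $(T^\vee\q W^\fs)_{\be_e}$, which via $\pi_{\sqrt q}$ map respectively to the two points of length-$4$ reducibility $z_a=(1,q^{-1})$ and $z_c=(-1,-q^{-1})$ identified in the proof of Lemma~\ref{lem:familySO}, I send them to the two $L$-indistinguishable discrete-series constituents at $z_a$ and $z_c$. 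The identity $inf.ch.\circ\mu^\fs = \pi_{\sqrt q}$ is then immediate from the fact that the infinitesimal character of $\cV^\fs_{\sigma,u,\rho}$ is $\sigma$.

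The main obstacle, and novel feature compared to the previous three cases, lies at the extra isolated point $pt_* \in (T^\vee\q W^\fs)_{\be_0}$. Since $h_{\be_0}=1$, we have $\pi_{\sqrt q}(pt_*) = z_* = (1,-1)$, represented by $\epsilon\chi\otimes\chi$, where by Keys' theorem \cite[Theorem~$\rG_2$]{K} the unitary principal series decomposes as
\[
I(\epsilon\chi\otimes\chi) = \pi^{+} \oplus \pi^{-}.
\]
The two preimages of $z_*$ under $\pi_{\sqrt q}$---namely $z_*$ itself, viewed as a point of $T^\vee/W^\fs\subset (T^\vee\q W^\fs)_{\be_0}$, and the extra point $pt_*$---must be mapped bijectively onto $\{\pi^{+},\pi^{-}\}$. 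This separation is made possible precisely by the two distinct simple $J^\fs_{\be_0}$-modules $M'_{(0,0)}$ and $M''_{(0,0)}$ that emerged at the origin in the proof of Lemma~\ref{lem:extSO}: the non-Morita (filtered spectrum-preserving) step in the geometric equivalence $J^\fs_{\be_0}\asymp \cO(T^\vee/W^\fs)\oplus\Cset$ of (\ref{eqn:first}) is exactly what effects the separation of this $L$-packet.

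The key checking step is therefore to verify that the bijection $\eta^\fs_{\be_0}$ coming from that filtered spectrum-preserving morphism matches $\{z_*,pt_*\}$ with $\{\pi^{+},\pi^{-}\}$; this is done by tracing the construction of $\delta_0$ in the proof of Lemma~\ref{lem:extSO} and observing that $M'_{(0,0)},M''_{(0,0)}$ are precisely the two Kazhdan--Lusztig triples of Reeder at $\sigma=z_*$ with trivial unipotent part and differing by the two characters of the component group. With this identification in hand, part~(4) of Theorem~\ref{thm:main} follows because $inf.ch.(\cV^\fs_{\sigma,u,\rho})=\sigma=\pi_{\sqrt q}(t)$ cell by cell, and compatibility $\mu^\fs((T^\vee\q W^\fs)_\be)\subset \Irr(G)^\fs_\be$ with the cell partitions holds by construction.
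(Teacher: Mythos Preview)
Your approach is essentially the same as the paper's: define $\mu^\fs$ cell by cell via the geometric equivalences of Lemma~\ref{lem:extSO}, identify the crucial new phenomenon at $pt_*$ (the $L$-packet $\pi^+\oplus\pi^-$ separated by the non-Morita step in $J^\fs_{\be_0}\asymp\cO(T^\vee/W^\fs)\oplus\Cset$), and conclude $inf.ch.\circ\mu^\fs=\pi_{\sqrt q}$ from $inf.ch.(\cV^\fs_{\sigma,u,\rho})=\sigma$. The paper carries out the same program but with considerably more explicit detail: it works directly in $H^\fs=(\SL(2,\Cset)\times\SL(2,\Cset))/\langle -\Id,-\Id\rangle$, and for each of the four unipotent classes it computes the component group $A_{\sigma,u}$, the variety of Borel subgroups $\cB^\fs_{\sigma,u}$, and exactly which $\rho$ appear. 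In particular, for the trivial unipotent it shows that $A_{\sigma,1}$ is trivial except at $[s_\bbi,s_\bbi]$ (where $\bbi=\sqrt{-1}$, which corresponds to your $z_*$ under $\zeta$), that there the component group is $\Zset/2\Zset$, and that both $1$ and $\sgn$ occur in the homology of the four-point Borel variety. Your sentence ``this is done by tracing the construction of $\delta_0$'' is where the paper actually does the work.

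One small correction: you describe the images of $pt_1,pt_2\in(T^\vee\q W^\fs)_{\be_e}$ as ``the two $L$-indistinguishable discrete-series constituents at $z_a$ and $z_c$''. These two representations have distinct semisimple parameters $[s_{\sqrt q},s_{\sqrt q}]$ and $[s_{\sqrt q},s_{-\sqrt q}]$, hence distinct infinitesimal characters, and the paper explicitly remarks that this pair is \emph{not} an $L$-packet. The only $L$-packet in this Bernstein component sits over $z_*$, in the $\be_0$-piece.
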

\begin{proof} This proof requires a detailed analysis of the
associated  $KL$-pa\-ra\-me\-ters. We recall (see the beginning of
section~\ref{sec:SO})
that \[ H^\fs= \SL(2,
\Cset) \times \SL(2, \Cset)/\langle -\Id,-\Id\rangle.\]
We recall also the beginning of
section~\ref{sec:SO} that the group $H^\fs$ admits $4$
unipotent classes $\be_0$, $\be_1$, $\be_1'$, $\be_0$. We have the
corresponding decomposition of the asymptotic algebra into ideals:
\[
J^\fs = J^\fs_{\be_e} \oplus J^\fs_{\be_1} \oplus J^\fs_{\be_1'}
\oplus J^\fs_{\be_0}.\]

We will write
\[\SL(2, \Cset) \times \SL(2, \Cset) \longrightarrow H^\fs, \quad
(x,y) \mapsto [x,y],\]

\[s_{\nalpha}: =
\left[\begin{array}{cc} \nalpha & 0\\
0 & \nalpha^{-1}
\end{array}\right], \quad\text{for $\nalpha\in\Cset^\times$,}\]
\[u: =
\left[\begin{array}{cc} 1 & 1\\
0 & 1
\end{array}\right].\]
Recall that $T^\vee$ is the standard maximal torus in $H^\fs$. We will
write\[ [s_{\nalpha},s_{\nbeta}] \in T^\vee.\] The group $W^\fs$ is
generated by the element which exchanges $\nalpha$ and
$\nalpha^{-1}$, and the element which exchanges $\nbeta$ and
$\nbeta^{-1}$.

We will now consider separately the $4$ unipotent classes in
$H^{\fs}$.

\subsubsection{Case 1} We consider
\[
S: = [s_{\sqrt q}, s_{\sqrt q}] \in T^\vee, \quad U: = [u,u] \in
H^\fs.\]
These form a semisimple-unipotent-pair, \ie
\[
SUS^{-1} = U^q.\]

We note that the component group of the simultaneous centralizer
of $S$ and $U$ is given by
\[
\Cent(S,U) = \Cent([s_{\sqrt q}, s_{\sqrt q}], [u,u]) = \{[\Id,\Id],[\Id,-\Id]\} =
\Zset/2\Zset.\] We also have:
\[
\Cent([s_{\sqrt q}, s_{- \sqrt q}], [u,u]) = \{[\Id,\Id],[\Id,-\Id]\} =
\Zset/2\Zset.\]

In each case, the associated variety of Borel subgroups is a
point, namely $[b,b]$ where $b$ is the standard Borel subgroup of
$\SL(2,\Cset)$. The $KL$-parameters are given by

\[
([s_{\sqrt q}, s_{\sqrt q}], [u,u], 1), \quad \quad ([s_{\sqrt q},
s_{- \sqrt q}], [u,u],1).\]

These two $KL$-parameters correspond to the ideal $J^\fs_{\be_e}$ in
$J^\fs$, for which we have
\[
J^\fs_{\be_e} \asymp \Cset \oplus \Cset.\]
This is \emph{not} an $L$-packet in the principal series of
$H^\fs$.

\subsubsection{Case 2}  For each $\nalpha \in \Cset^{\times}$, we
have
\[
\Cent([s_{\sqrt q}, s_{\nalpha}], [u,\Id]) = \Zset/2\Zset.\]The
associated variety of Borel subgroups comprises two points, namely
$[b,b]$ and $[b,b^o]$ where $b^o$ is the opposite Borel subgroup,
i.e. the lower-triangular matrices in $\SL(2,\Cset)$.  The
component group $\Zset/2\Zset$ acts on the homology of the two
points as the trivial $2$-dimensional representation. The
$KL$-parameters in this case are given by
\[
([s_{\sqrt q}, s_{\nalpha}], [u,\Id], 1).\]
These parameters correspond to the ideal $J^\fs_{\be_1}$
of $J^\fs$:
\[
J^\fs_{\be_1} \asymp \mathcal{O}(\mathbb{A}^1).\]

\subsubsection{Case 3} We also have the $KL$-parameters

\[
([s_{\nalpha}, s_{\sqrt q}], [\Id,u], 1)\] with $\nalpha \in
\Cset^{\times}$. These parameters correspond to the ideal
$J_{\be'_1}^\fs$ of $J^\fs$:
\[
J_{\be'_1}^\fs \asymp \mathcal{O}(\mathbb{A}^1).\]

\subsubsection{Case 4} We need to consider the component group of
the semisimple-unipotent-pair
\[
([s_{\nalpha},s_{\nbeta}],[\Id,\Id]).\] The component group of this
semisimple-unipotent-pair is trivial unless $\nalpha = \nbeta =\bbi$, where
$\bbi=
\sqrt-1$ denotes a square root of $1$.  In that case we have
\[
\Cent([s_\bbi,s_\bbi],[\Id,\Id]) = \Zset/2\Zset.\] The associated variety of
Borel subgroups of $H^{\fs}$ comprises $4$ points:
\[
[b,b], \quad [b^o, b^o], \quad [b,b^o], \quad [b^o,b]\]  The
generator of the component group $\Zset/2\Zset$ switches $b$ and
$b^o$.  The $4$ points span a vector space of dimension $4$ on
which $\Zset/2\Zset$ acts by switching basis elements as follows:
\[
[b,b] \to [b^o,b^o], \quad \quad [b,b^o] \to [b^o,b]\] Therefore,
$\Zset/2\Zset$ acts as the direct sum of two copies of the regular
representation $1 \oplus \sgn$ of $\Zset/2\Zset$.

We recall that the equivalence relation among the $KL$-parameters
for $H^\fs$  is conjugacy in $H^\fs$. The $KL$-parameters in this
case are

\[
([s_{\nalpha},s_{\nbeta}],[\Id,\Id],1)\] with $\nalpha, \nbeta \in
\Cset^{\times}$, and
\[
([s_\bbi,s_\bbi],[\Id,\Id],\sgn).\]

This corresponds to the ideal $J^\fs_{\be_0} \subset J^\fs$ for which we
have shown in equation~(\ref{eqn:SOJzero}) that
\[
J^\fs_{\be_0} \asymp \mathcal{O}(T^\vee/W^\fs) \oplus \Cset.\]

There is an $L$-packet in the principal series of $H^\fs$ with the
following $KL$-parameters:
\[([s_\bbi,s_\bbi],[\Id,\Id],1), \quad \quad ([s_\bbi,s_\bbi],[\Id,\Id],\sgn).\]
The representations indexed by these $KL$-parameters are tempered. The
corresponding representations of $G$ itself still belong to an $L$-packet
(see the end of subsection~\ref{subsec:indextriples}). These are the
representations denoted $\pi^+$ and $\pi^-$ in the proof of
Lemma~\ref{lem:tempSO}.

Throughout \S 8 we have been using the ring isomorphism
\[
\Cset[X,Y]_0 \cong \Cset[T^{\vee}/W^{\fs}]\] induced by the map
$\zeta\colon T^\vee\to \Aset^2$ defined by
\[\zeta((z_1,z_2)):= (z_1 + z_1^{-1}, z_2 + z_2^{-1}).\] Note that
this isomorphism sends $(\bbi,\bbi)$ to $(0,0) \in \Aset^2$.
This is the unique point in the affine space $\mathbb{A}^2$ which
is fixed under the map $(x,y) \mapsto (-x,-y)$.

Consider the map
\[\quad M_{z,z'}\;\;\mapsto
\cV^\fs_{[s_{\zeta(z)},s_{\zeta(z')}],[U,U],1},\quad\text{if $(z,z')\ne(0,0)$,}\]
\[\{M_{0,0}',M_{0,0}''\}\mapsto\{\cV^\fs_{[s_\bbi,s_\bbi],[\Id,\Id],1},
\cV^\fs_{[s_\bbi,s_\bbi],[\Id,\Id],\sgn}\},\] from the set of
simple $J_\be^\fs$-modules to the subset of $\Irr(G)^\fs$ such
that $[U,U]$ corresponds to the two-sided cell $\be$. This map
induces a bijection which corresponds, at the level of modules of
the Hecke algebra, to the bijection induced by the Lusztig map
$\phi_q$, by the uniqueness property of $\phi_q$.

\end{proof}

 \begin{lem} \label{lem:tempSO} Part (5) of Theorem \ref{thm:main} is true in this
 case.\end{lem}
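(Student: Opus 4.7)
The plan is to match each piece of the decomposition
\[E^\fs \q W^\fs \,=\, (pt_1 \sqcup pt_2) \,\sqcup\, \mathbb{I} \,\sqcup\, \mathbb{I} \,\sqcup\, (E^\fs/W^\fs \sqcup pt_*)\]
obtained in Lemma~\ref{lem:extSO} with an explicit family of tempered representations in $\Irr^\temp(G)^\fs$, using the explicit form of $\mu^\fs$ computed in Lemma~\ref{lem:bijSO}. The strategy is the same as in Lemmas~\ref{lem:tempIwahori}, \ref{lem:tempGL} and \ref{lem:tempSL}: each component of the compact extended quotient will be identified with a standard ``layer'' of tempered representations (unitary principal series, intermediate unitary series, or isolated discrete/limit-of-discrete series).

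First I would show that $\mu^\fs$ sends $E^\fs/W^\fs \subset (T^\vee\q W^\fs)_{\be_0}$ to the unitary principal series
\[\left\{I(\psi_1\chi\otimes\psi_2\chi)\,:\,\psi_1,\psi_2\in \Psi^\temp(F^\times)\right\}/W^\fs,\]
which is tempered by definition. By \cite[Theorem~$\rG_2$]{K} these are irreducible off the Keys point $\epsilon\chi\otimes\chi$, at which the representation splits as $\pi^+\oplus\pi^-$. Next, the two intervals $\mathbb{I}$ attached to the cells $\be_1$ and $\be_1'$ would be identified with the two intermediate unitary series
\[\{I_\alpha(\delta(\psi\chi)):\psi\in\Psi^\temp(F^\times)\}, \quad \{I_\beta(\delta(\psi\chi)):\psi\in\Psi^\temp(F^\times)\},\]
which are tempered since $\delta(\psi\chi)$ is essentially square-integrable on the Levi, and which are irreducible for $\psi$ unitary by Lemma~\ref{lem: lengths}.

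Then I would treat the four isolated points. The two points $pt_1$, $pt_2$ in $(T^\vee\q W^\fs)_{\be_e}$ are mapped (by the Kazhdan--Lusztig parameters computed in Case~1 of the proof of Lemma~\ref{lem:bijSO}) to the discrete series representations with parameters $([s_{\sqrt q},s_{\sqrt q}],[u,u],1)$ and $([s_{\sqrt q},s_{-\sqrt q}],[u,u],1)$; these are tempered. Finally, the point $pt_*$ in $(T^\vee\q W^\fs)_{\be_0}$, corresponding to the simple $J^\fs_{\be_0}$-module labelled by $\sgn$ in Case~4 of the proof of Lemma~\ref{lem:bijSO}, is mapped to one of the two tempered constituents of the reducible induced representation $I(\epsilon\chi\otimes\chi) = \pi^+ \oplus \pi^-$ identified in Lemma~\ref{lem:familySO}.

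The one delicate point will be the bookkeeping at the $L$-packet $\{\pi^+,\pi^-\}$: the Keys point $\epsilon\chi\otimes\chi$ lies in $E^\fs/W^\fs$, yet only \emph{one} constituent of $I(\epsilon\chi\otimes\chi)$ can arise from that piece, the other being precisely the image of $pt_*$ under $\mu^\fs$. This matches the splitting of $J^\fs_{\be_0}$ at $(z,z')=(0,0)$ into the two simple modules $M'_{(0,0)}$, $M''_{(0,0)}$ exhibited in the proof of Lemma~\ref{lem:extSO} (equivalently, the failure of $\mu^\fs$ to be a homeomorphism discussed in the Note following that lemma). Once this $L$-packet bookkeeping is checked, the surjectivity of $\mu^\fs|_{E^\fs\q W^\fs}$ onto $\Irr^\temp(G)^\fs$ follows from a Langlands-classification/Casselman-criterion argument: any $\fs$-tempered irreducible representation of $G$ appears either in a unitary principal series, in an $I_\gamma(\delta(\psi\chi))$ for $\psi$ unitary and $\gamma\in\{\alpha,\beta\}$, or as one of the finitely many discrete series with inertial support $\fs$, which exhausts the four cases above.
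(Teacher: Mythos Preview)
Your proposal is correct and follows essentially the same approach as the paper's proof: both match the pieces of $E^\fs\q W^\fs$ from Lemma~\ref{lem:extSO} to the unitary principal series, the two intermediate series $I_\alpha(\delta(\psi\chi))$ and $I_\beta(\delta(\psi\chi))$, the two discrete series at $pt_1,pt_2$, and the $L$-packet $\{\pi^+,\pi^-\}$ split between the Keys point in $E^\fs/W^\fs$ and $pt_*$. The only cosmetic differences are that the paper names the two discrete series explicitly as $\pi(\chi)$, $\pi(\epsilon\chi)$ from \cite[Prop.~4.1]{M} rather than via their $KL$-parameters, and that the paper obtains surjectivity by simply listing the $\fs$-tempered representations from Mui\'c's classification rather than invoking a Langlands/Casselman argument.
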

 \begin{proof}
We start with the list of all those tempered representations of $\rG_2$ which
admit inertial support $\fs$ (see Proposition~\ref{prop:inertialsupports}):
\[
I(\psi_1\chi \otimes \psi_2\chi) \cup
I_{\alpha}(0,\delta(\psi\chi)) \cup \pi(\chi) \cup
\pi(\epsilon\chi) \cup I_{\beta}(0,\delta(\phi\chi))\] where
\[\psi_1: = z_1^{\val_F},\;\; \psi_2: = z_2^{\val_F},\;\;  \psi: =
z^{\val_F}, \;\;\phi: = w^{\val_F}\] are unramified characters of
$F^{\times}$, and \[\pi(\chi)\subset I(\nu\chi\otimes\chi), \quad
\pi(\epsilon\chi) \subset I(\nu\epsilon\chi\otimes \epsilon\chi)\]
are the elements in the discrete series described in
\cite[Prop.~4.1]{M}.

We recall from Lemma~\ref{lem:extSO} that
\[E^\fs\q W^\fs=(E^\fs/W^{\fs}\sqcup pt_*) \sqcup \mathbb{I} \sqcup (pt_1
\sqcup pt_2) \sqcup \mathbb{I}.\]
Then the restriction of $\mu^\fs$ to $\Irr^\fs(G)^\temp$ is
as follows:
\[W^{\fs}
\cdot(z_1,z_2)\mapsto
I(\psi_1\chi_1 \otimes \psi_2\chi_2),\]
unless $z_1 = -1, z_2 = 1$ in which case
\[W^{\fs} \cdot(-1,1) \cup pt_*\mapsto
\pi^{+} \oplus \pi^{-},\]
\[pt_1 \sqcup pt_2 \mapsto
\pi(\chi) \sqcup \pi(\epsilon\chi).\] We note
that $(\psi\chi)^{\vee} = \psi^{-1}\chi$, so that
$I_{\gamma}(0,\delta(\psi\chi)) \cong
I_{\gamma}(0,\delta(\psi^{-1}\chi))$
by \cite{M}, where $\gamma =
\alpha, \beta$.  Finally,
\[z\mapsto
I_{\alpha}(0,\delta(\psi\chi))\quad\text{and}\quad w \mapsto
I_{\beta}(0,\delta(\phi\chi)).\]
\end{proof}

\smallskip

{\it Acknowledgements.}
We would like to thank Goran Mui\'c for sending us several
detailed emails concerning the representation theory of $\rG_2$, and
Nanhua Xi for sending us several detailed emails concerning the
asymptotic algebra $J$ of Lusztig. We also thank the referee for many detailed and constructive comments.


\begin{thebibliography}{99}
\bibitem{A} A.-M. Aubert, Dualit\'e dans le groupe de Grothendieck de la
cat\'egorie des repr\'esentations lisses de longueur finie d'un groupe
r\'eductif $p$-adique, Trans. Amer. Math. Soc. 347 (1995) 2179--2189.
Erratum: 348 (1996) 4687--4690.
\bibitem{ABP} A.-M. Aubert, P.~Baum, R.~Plymen, The Hecke algebra of a
reductive $p$-adic group: a geometric conjecture, In:
Noncommutative geometry and number theory, Eds: C. Consani and M.
Marcolli, Aspects of Mathematics E37, Vieweg Verlag (2006) 1--34.
\bibitem{ABPCR} A.-M. Aubert, P.~Baum, R.~Plymen, Geometric
structure in the representation theory of $p$-adic groups, C.R.
Acad. Sci. Paris, Ser. I 345 (2007) 573--578.
\bibitem{BN} P. Baum, V. Nistor, Periodic cyclic homology of Iwahori-Hecke
algebras, K-Theory 27 (2002), 329--357.
\bibitem{B} J. Bernstein, P.~Deligne, D.~Kazhdan, M.-F.~Vigneras,
Re\-pr\'e\-sen\-ta\-tions des grou\-pes r\'e\-duc\-tifs sur un
corps local, Travaux en cours, Hermann, 1984.
\bibitem{BZI} I.N. Bernstein, A.V. Zelevinsky, Induced
representations of reductive $p$-adic groups, Ann. scient. \'{E}c.
Norm. Sup. 10 (1977) 441--472.
\bibitem{Bo} A.~Borel, Automorphic $L$-functions, Proc. Symp. Pure Math.  33 (1979),
part~2, 27--61.
\bibitem{Bou} N. Bourbaki, Lie groups and Lie algebras, Springer 2005.
\bibitem{BTI} F.~Bruhat, J.~Tits, Groupes r\'eductifs sur un corps local,
I. Donn\'ees radicielles valu\'ees, Publ. Math. I.H.E.S. {\bf 41}
(1972), 1--251.
\bibitem{Carter} R.~Carter, Finite Groups of Lie Type, Conjugacy classes
and complex characters, Wiley Classics Library, 1993.
\bibitem{EH} D. Eisenbud, J. Harris, The geometry of schemes,
Springer, 2001.
\bibitem{Gu} P.~Gunnells,  Cells in Coxeter groups, Notices of the AMS 53 (2006), no. 5, 528--535.
\bibitem{KL} D.~Kazhdan, G.~Lusztig, Proof of the Deligne-Langlands
conjecture for Hecke algebras, Invent. Math. 87 (1987) 153--215.
\bibitem{K} D. Keys, On the decomposition of reducible principal series
representations of $p$-adic Chevalley groups, Pacific J. Math. 101 (1982)
351--388.
\bibitem{LLuminy} G. Lusztig, Singularities, character formulas, and a
$q$-analog of weight multiplicities, Analysis and topology on singular
spaces, II, III (Luminy 1981), Ast\'erisque 101--102, Soc. Math. France,
Paris, 1983, 208--229.
\bibitem{LCells} G. Lusztig, Cells in affine Weyl groups, Algebraic Groups
and Related Topics (Kyoto/Nagoya, 1983), 255--287, Adv. Stud. Pure Math.
6, Math. Soc. Japan, Tokyo, 1985.
\bibitem{LCellsIII} G. Lusztig, Cells in affine Weyl groups, III, J. Fac.
Sci. Univ. Tokyo, Sect. IA, Math, 34 (1987), 223--243.
\bibitem{LCellsIV} G. Lusztig, Cells in affine Weyl groups, IV, J. Fac.
Sci. Univ. Tokyo, Sect. IA, Math, {\bf 36} (1989), 297--328.
\bibitem{Lbook} G. Lusztig, Hecke algebras with unequal parameters, CRM
Monograph Series, 18, Amer. Math. Soc. 2003.
\bibitem{Lflagmanifold} G.~Lusztig, On some partitions of a flag manifold,
Preprint June 2009, arXiv:0906.1505v1.
\bibitem{M} G. Mui\'c, The unitary dual of $p$-adic $\rG_2$, Duke Math. J.
{\bf 90} (1997), 465--493.
\bibitem{Ram} A. Ram, Representations of rank two Affine Hecke algebras,
in: C. Musili (Ed.), Advances in Algebra and Geometry,
Hindustan Book Agency, 2003, 57--91.
\bibitem{Re} M.~Reeder, Isogenies of Hecke algebras and a Langlands
correspondence for ramified principal series representations,
Represent. Theory 6 (2002), 101--128.
\bibitem{Roc} A. Roche, Types and Hecke algebras for principal
series representations of split reductive $p$-adic groups, Ann.
scient. \'Ec. Norm. Sup. {\bf 31} (1998), 361--413.
\bibitem{R} F.~Rodier, D\'ecomposition de la s\'erie principale des
groupes r\'eductifs $p$-adiques, Lecture Notes in Math. {\bf 880},
Springer-Verlag, Berlin, 1981.
\bibitem{Ros} J.~Rosenberg, Appendix to O.~Bratelli's paper on "Crossed
products of UHF algebras", Duke Math. J. {\bf 46} (1979), 25--26.
\bibitem{Xi} N. Xi, Representations of affine Hecke algebras, Lectures
Notes in Math. {\bf 1587}, Springer, 1994.
\bibitem{Xic0} N. Xi, The based ring of the lowest two-sided cell of an
affine Weyl group, J. of Algebra 134 (1990) 356--368.


\end{thebibliography}
\end{document}